\newtheorem{thm}{Theorem}[section]
\newtheorem{lemma}[thm]{Lemma}
\newtheorem{prop}[thm]{Proposition}
\newtheorem{cor}[thm]{Corollary}
\newtheorem{rmk}[thm]{Remark}
\theoremstyle{definition}
 \numberwithin{equation}{section}
\newcommand{\eps}{\varepsilon}
\newcommand{\R}{{\mathbb R}}
\DeclareMathOperator{\spann}{span}
\def\sideremark#1{\ifvmode\leavevmode\fi\vadjust{\vbox to0pt{\vss
 \hbox to 0pt{\hskip\hsize\hskip1em
 \vbox{\hsize2.1cm\tiny\raggedright\pretolerance10000
  \noindent #1\hfill}\hss}\vbox to15pt{\vfil}\vss}}}%
\title[Slightly supercritical pure Neumann problems]{Existence of solutions to a slightly supercritical pure Neumann problem}
\author{Angela Pistoia, Alberto Salda\~na, and Hugo Tavares}
\date{\today}
\begin{document}

\begin{abstract}
We show the existence and multiplicity of concentrating solutions to a pure Neumann slightly supercritical problem in a ball.  This is the first existence result for this kind of problems in the supercritical regime. Since the solutions must satisfy a compatibility condition of zero average, all of them have to change sign.  Our proofs are based on a Lyapunov-Schmidt reduction argument which incorporates the zero-average condition using suitable symmetries. Our approach also guarantees the existence and multiplicity of solutions to subcritical Neumann problems in annuli. More general symmetric domains (\emph{e.g.} ellipsoids) are also discussed.
\end{abstract}
\maketitle
\section{Introduction}\label{uno}

Let $\Omega=B_1(0)$ be the unit ball in $\mathbb R^n$ ($n\geq 4$) centered at the origin, $q>0$, $q\neq 1$, and consider the pure Neumann semilinear problem given by 
\begin{equation}\label{super:intro}
-\Delta u=|u|^{q-1}u\ \hbox{in}\ \Omega,\qquad \partial_\nu u=0\ \hbox{on}\ \partial\Omega.
\end{equation}
Solutions of this problem for a general smooth domain are known to exist only for $q\leq p$, where 
\begin{align*}
p:=\frac{n+2}{n-2}
\end{align*}
is the critical Sobolev exponent. The sublinear case $q\in(0,1)$ is studied in \cite{PW15} via a minimization problem with a nonsmooth constraint, while the superlinear-subcritical case $q\in (1,p)$ can be handled with a standard Nehari approach, see \cite{ST17}. Finally, in contrast to the Dirichlet counterpart of \eqref{super:intro} (which does not have solutions for $q\geq p$ because of the Pohozaev identity), the pure Neumann problem allows the existence of smooth solutions at the critical exponent $q=p$.  For instance, one can find a least-energy solution using the dual method \cite{CK91}; this solution is classical and can also be obtained as a $C^{2,\alpha}$--limit of slightly subcritical least-energy solutions \cite{ST22}. These solutions must change sign, since, integrating \eqref{super:intro} over $\Omega$, we have that
\begin{align}\label{ave}
    \int_{\Omega}|u|^{q-1}u\, dx=\int_{\Omega}-\Delta u \, dx= \int_{\partial \Omega}\partial_\nu u\, d\sigma=0.
\end{align}
In particular, this implies that for $p=q$ there cannot be any radially symmetric solution (because this would imply the existence of a radially symmetric solution to the Dirichlet problem in the nodal component containing the origin).

In addition to the least-energy solution, one can also construct symmetric solutions with a gluing approach, see for example \cite{CK90}.  

The proofs of these results are non trivial, since one has to overcome the lack of compactness inherent of critical problems.  This difficulty is even greater in the supercritical regime $q>p$, for which no existence result for \eqref{super:intro} was previously known.

This is the question that motivates the present paper.  In particular, we wish to answer the following questions. For  $\eps>0$, does the problem
\begin{equation}\label{super}
-\Delta u=|u|^{p-1+\eps}u\ \hbox{in}\ \Omega,\qquad \partial_\nu u=0\ \hbox{on}\ \partial\Omega,
\end{equation}
 admits a solution?  Can one obtain multiple solutions?  Is it possible to have solutions in other domains which are not the unit ball?
 
 We answer all these questions affirmatively in case the domain is either a ball or an annulus. For this, we use a Lyapunov-Schmidt reduction strategy, which has several differences with respect to its implementation in the study of Dirichlet problems.  For instance, in Neumann problems  the maxima and minima of the solutions
can be located at specific points on the boundary (and not necessarily in the
interior of the domain, as in the Dirichlet case).
 This difference implies important changes in the method, since the curvature of the boundary now plays an important role and the blow-up analysis in the Neumann case leads (after a rescaling) to a limiting problem in the halfspace (see Lemma \ref{exp:lem}). 
 
The Lyapunov-Schmidt reduction strategy has been used previously to construct \emph{positive solutions} with bubbling behavior around one or more critical points of mean curvature, with positive mean curvature, to the Neumann problem 
  \begin{equation}\label{RW-pure}
-\Delta u+\mu u=u^q\ \hbox{in}\ \Sigma,\qquad \partial_\nu u=0\ \hbox{on}\ \partial\Sigma,
\end{equation}
when the parameter $\mu\to+\infty$, $q=p$ in \cite{M94,AMY95,G95,gg98,R97,w95}, and $\Sigma \subset \R^N$ is a general bounded smooth domain. A similar approach has been also used to obtain solutions to \eqref{RW-pure}
when $\mu>0$ is fixed and $q=p\pm\eps$ with $\eps$ sufficiently small, which blows-up at critical points of the mean curvature of the boundary with positive mean curvature if $\eps>0$ and negative mean curvature if $\eps<0$, see \cite{DMP05,RW04,RW05}. In all the previous cases,  the presence of the linear term  $\mu u$ allows to use as building block the single bubble.  In the \emph{pure Neumann case}, namely, with $\mu=0$, the natural constraint (1.2) forces the solutions to be sign changing. It is natural, therefore, to look for solutions which are the sum of two single bubbles with different signs concentrated at two suitable different points. We do not known if a construction with a single bubble with a small negative part can be done.  In symmetric domains, we show below that the blow-up points can be positioned at points on the boundary that are critical points of the mean curvature and that preserve a particular symmetric arrangement after a translation, see Remark \ref{domains}.  The symmetry of the domain simplifies the choice of the two concentration points and reduces the number of unknowns in the Lyapunov-Schmidt reduction.  As a side result of independent interest, we also show the existence of new (concentrating) solutions in the slightly subcritical regime in an annuli. 

 Before we state our main results, we need to introduce some notation. Let
\begin{align}\label{eq:bubble}
U_{\delta,\xi}(x):=\alpha_n \frac{\delta^{\frac{n-2}{2}}}{(\delta^2+|x-\xi|^2)^{\frac{n-2}{2}}},\qquad \text{ where } \xi\in \R^N,\ \delta>0,\quad \alpha_n:=[n(n-2)]^\frac{n-2}{4}.
\end{align}
This family of functions are called \emph{bubbles} and represent the unique positive solutions in $\mathcal{D}^{1,2}(\R^N)$ of 
$$-\Delta U=U^p\ \hbox{in}\ \mathbb R^n.$$

For every $t>1$ and $h\in L^t(\Omega)$ satisfying $\int_\Omega h =0$, we denote by 
\begin{align}\label{K:def}
K(h)=u\in W^{2,t}(\Omega)    
\end{align}
the unique solution to the (pure) Neumann problem
\begin{align}\label{N:p}
  -\Delta u =h\quad \text{in }\Omega,\qquad \partial_\nu u=0\quad\text{ on }\partial \Omega,\qquad \int_\Omega u =0.
\end{align}
Furthermore, for $\widetilde u\in W_{loc}^{2,t}(\R^n)$ such that $\int_\Omega \Delta \widetilde u=0$, we write 
\begin{align*}
P\widetilde u:=K(-\Delta \widetilde u).    
\end{align*}

Let $W_{\delta}:=U_{\delta,e_n}-U_{\delta,-e_n}$, where $e_n:=(0,\ldots, 0,1)\in \R^n$. Then $P W_{\delta}$ is the solution of  
\begin{align*}
-\Delta P W_{\delta}=-\Delta  W_\delta = U_{\delta,e_n}^p-U_{\delta,-e_n}^p\ \hbox{in}\ \Omega,\qquad \partial_\nu P W_{\delta}=0\ \hbox{on}\ \partial\Omega, \qquad
\int\limits_{\Omega} P W_{\delta}=0.
\end{align*}
Observe that $\int_\Omega (U_{\delta,e_n}^p-U_{\delta,-e_n }^p)=0$ is satisfied.   Set 
\begin{equation}\label{eq:s_eps}
s_\eps:=p+1+\eps=\frac{2n}{n-2}+ \eps
\end{equation}
and let
\begin{align*}
  H_\eps&:=\{u\in H^1(\Omega)\cap L^{s_\eps}(\Omega)\::\: u \text{ is odd in } x_n \text{ and even in } x_i \text{ for } i=1,\ldots, n-1\}\\
  					&=\left\{u\in H^1(\Omega)\cap L^{s_\eps}(\Omega)\::\: \begin{array}{l}u(x_1,\ldots,x_{n-1}, -x_n)=-u(x_1,\ldots, x_{n-1},x_n)\\ 
																												u(x_1,\ldots,-x_i,\ldots, x_n)=u(x_1,\ldots,x_i,\ldots, x_n),\ i=1,\ldots, n-1
																												 \end{array}\right\},
\end{align*}
endowed with the norm 
\begin{align*}
\|u\|_{H_\eps} = \|u\| + |u|_{s_\eps}.
\end{align*} 

Our main existence result is the following.

\begin{thm}\label{thm:main:ball} Let $n\geq 4$ and let $\Omega=B_1(0)\subset \R^n$ be the unit ball centered at the origin.  Then there is $\eps_0>0$ such that, for $\eps\in (0,\eps_0)$, the problem \eqref{super} has a solution $u_\eps\in H_\eps$ of the form
\[
u_\eps=PW_{\delta_\eps,e_n} + \phi_\eps,
\]
where 
\begin{align*}
\delta_\eps=d(\eps) \eps,    
\end{align*}
with $d(\eps)\to d_*$ as $\eps\to 0$, for $d_*>0$ given explicitly by \eqref{eq:d_n} below. Moreover,  $\phi_\eps\in H_\eps$ is such that $\|\phi_\eps\|_{H^1(\Omega)}\to 0$ as $\eps\to 0$.
\end{thm}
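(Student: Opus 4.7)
The plan is a Lyapunov--Schmidt reduction in $H_\eps$. A crucial preliminary is that oddness in $x_n$ forces $\int_\Omega u=0$ and $\int_\Omega |u|^{p-1+\eps}u=0$ for all $u\in H_\eps$, so both the compatibility condition in the definition of $K$ and the integral identity \eqref{ave} hold automatically. I would look for a solution of the form $u=PW_\delta+\phi$ with $\phi\in H_\eps$ small and $\delta>0$ a concentration scale to be determined. The natural approximate kernel of the linearization at $PW_\delta$ is spanned by $\partial_\delta U_{\delta,\pm e_n}$ and by the translations $\partial_{\xi_i}U_{\delta,\pm e_n}$, but the symmetries of $H_\eps$ kill all translation directions (the $i=1,\dots,n-1$ by the even symmetries; $i=n$ because moving the two opposite bubbles coherently breaks oddness), leaving only the one-dimensional direction $Z_\delta:=\partial_\delta PW_\delta$. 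This collapse of the kernel to a single dimension is what makes an ansatz with a single parameter $\delta$ feasible.

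Given this, I would set up the standard reduction. Writing $L_\delta\phi:=\phi - K\bigl(p|PW_\delta|^{p-1}\phi\bigr)$ for the linearization, and decomposing $\phi\in Z_\delta^\perp$, $c\in\R$, the problem \eqref{super} becomes
\begin{align*}
L_\delta\phi = R_\delta+N_\delta(\phi)+cZ_\delta,\qquad R_\delta:=K\bigl(|PW_\delta|^{p-1+\eps}PW_\delta\bigr)-PW_\delta,
\end{align*}
with $N_\delta$ the superlinear remainder. Step~1 is to solve for $\phi=\phi_\eps(\delta)\in Z_\delta^\perp$ at fixed $\delta\in[a\eps,b\eps]$ by contraction mapping. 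The required ingredients are: (i) an error estimate $\|R_\delta\|_{H_\eps}=o(1)$ as $\eps\to 0$, obtained by combining $|U^{p+\eps}-U^p|$-type bounds, the bubble--bubble interaction at the fixed distance $2$ between $\pm e_n$, and the boundary correction $PW_\delta-W_\delta$; (ii) the uniform invertibility of $L_\delta$ on $Z_\delta^\perp$, which follows from a standard blow-up argument together with the classification of bounded solutions of the linearized bubble equation on the half-space supplied by Lemma \ref{exp:lem}, using the $H_\eps$-symmetries to rule out the forbidden directions.

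Step~2 is the bifurcation equation, i.e.\ showing that for a suitable $\delta=\delta_\eps$ the Lagrange multiplier $c_\eps(\delta)$ vanishes. Equivalently, $\delta_\eps$ is a critical point of the reduced functional $F_\eps(\delta):=J_\eps(PW_\delta+\phi_\eps(\delta))$, where $J_\eps$ is the energy of \eqref{super} restricted to $H_\eps$. I would expand $F_\eps(d\eps)=a_0+\eps\,\Phi(d)+o(\eps)$, identifying $\Phi(d)$ as the sum of three explicit contributions: a self-interaction term of order $\log d$ coming from the slight supercriticality of the exponent on each bubble; a bubble--bubble interaction between $U_{\delta,e_n}$ and $U_{\delta,-e_n}$ which, after the rescaling $\delta=d\eps$, contributes a term of order $d^{n-2}$; and a boundary-correction term of the same order from $PW_\delta-W_\delta$, encoding the Neumann Green-type function of $\Omega$ at $e_n$. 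The value $d_*$ in \eqref{eq:d_n} is then the unique positive nondegenerate critical point of $\Phi$, and the implicit function theorem yields $d(\eps)\to d_*$, producing the desired $\delta_\eps=d(\eps)\eps$. The main obstacle will be carrying out this expansion precisely enough to capture the three competing scales: in particular, the boundary correction here is not the classical Dirichlet Robin function, and checking that its combination with the supercritical self-interaction and the reflected-bubble interaction yields a coercive $\Phi$ with a unique positive critical point is the delicate step.
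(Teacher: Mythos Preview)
Your overall architecture matches the paper's exactly: one-parameter Lyapunov--Schmidt in $H_\eps$, with the symmetries collapsing the approximate kernel to the single direction $\partial_\delta PW_\delta$, followed by a contraction-mapping step for $\phi$ and a variational reduction in $d$. The invertibility-by-blow-up argument and the error estimate for $R_\delta$ are also as in the paper.

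The gap is in Step~2, in your identification of the terms in the reduced energy. The bubble--bubble interaction does \emph{not} enter at order $\eps$: since $e_n$ and $-e_n$ are at fixed distance~$2$, one has $\int_B U_{\delta,e_n}^p U_{\delta,-e_n}=O(\delta^{n-2})=O((d\eps)^{n-2})$, which for $n\ge4$ is $o(\eps)$ and is absorbed into the remainder. Hence there is no $d^{n-2}$ term in $\Phi$. The term that actually balances the supercritical $\log d$ is \emph{linear} in $d$ and comes entirely from boundary effects: the curvature correction in $\int_B U_{\delta,\pm e_n}^{p+1}=\tfrac{\mathfrak A}{2}-\tfrac{\mathfrak B}{2}\delta+o(\delta)$ together with the $\varphi_0$-correction in Lemma~\ref{exp:lem}, which contributes a further linear term with constant $\mathfrak C$. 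The reduced profile is therefore
\[
\Psi(d)=\mathrm{const}+\frac{(n-2)^2}{4n}\mathfrak A\,\log d-\Bigl(\frac{n-2}{2}\mathfrak C+\frac{\mathfrak B}{n}\Bigr)d,
\]
a $\log d$ versus $d$ competition, not $\log d$ versus $d^{n-2}$. Your ``three competing scales'' are really only two, and the mechanism is boundary curvature, not bubble interaction; with this correction the critical point $d_*$ in \eqref{eq:d_n} follows, and the paper concludes by showing $J_\eps$ has an interior maximum on $(\eta,1/\eta)$ rather than via the implicit function theorem.
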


As mentioned before, Theorem \ref{thm:main:ball} is proved using a Lyapunov-Schmidt reduction method in the space $H_\eps$. In particular, these solutions are odd and therefore we have that
\begin{align*}
  \int_\Omega u_\eps = \int_\Omega |u_\eps|^{t}u_\eps =0\qquad \text{ for any $0<t \leq p+\eps$.} 
\end{align*}
Of course, the problem \eqref{super} could have solutions which do not belong to $H_\eps$. For instance, we do not know if a solution can be constructed using only \emph{one} bubble.

We believe that our  techniques can be adapted to other situations, which we describe next.  Theorem \ref{thm:main:ball} is concerned with solutions which look like the difference of two bubbles. In the same spirit of  \cite{PFM03} and \cite{PR06}, it is natural to guess the existence of solutions on the unit ball that look like the sum of $2k$ bubbles with alternating signs. More precisely, for $\xi=(\xi_1,\ldots,\xi_n)\in \R^n$ and $\theta\in[0,2\pi)$, let
\begin{align*}
R_\theta:\R^n\to\R^n\quad \text{ be given by }\quad R_\theta 
\xi=(\xi_1 \cos\theta-\xi_n\sin\theta,\xi_2,\ldots,\xi_{n-1},\xi_1\sin\theta+\xi_n\cos\theta),
\end{align*}
namely, $R_\theta$ is a rotation through an angle $\theta$ in the plane generated by $e_1=(1,0,\ldots,0)$ and $e_n=(0,\ldots,0,1)$. Then, for small $\eps\in (0,\eps_0)$, the supercritical problem \eqref{super} should have a solution $u_\eps\in H_\eps$ of the form
\begin{align}\label{form}
u_\eps=P \left(\sum_{i=0}^{2k-1} (-1)^i U_{\delta^\eps,\xi_i}\right) + \phi_\eps,  
\end{align}
where $\xi_i:=R_{i\frac{\pi}{k}}e_n$, $\delta_{\eps}=d(\eps) \eps$, $d(\eps)\to d^*$ as $\eps\to 0$, and $\phi_\eps\in H_\eps$ is such that $\|\phi_\eps\|_{H^1(\Omega)}\to 0$ as $\eps\to 0$. In Remark \ref{domains} we explain why the blow-up points $x_i$ require these particular arrangement.  Comparing with Theorem \ref{thm:main:ball}, the proof of this result should be more technical and we leave it as an open problem.  It would be also interesting to investigate the existence of solutions exhibiting a clustering phenomena as in  \cite{WY07}.

We also mention that our approach should be easily adapted to other symmetric domains.  For instance, in an ellipsoid in $\R^n$ ($n\geq 4$) one can center two bubbles with opposite signs at the two antipodal points with the largest mean curvature (the vertex) and also other two bubbles with opposite signs at the two antipodal points with the smallest mean curvature (the co-vertex). In fact, the $n-$dimensional ellipse may have  $n$ different solutions blowing up at the critical points of the mean curvature which lie on opposite axis. A similar approach can be used to consider symmetric bounded smooth domains (not necessarily convex).  For instance, let
    \begin{equation}\label{eq:assumption_Omega}
\Omega \text{ be symmetric with respect to $x_i$ for every $i=1,\ldots, n$,}
\end{equation}
that is, $(x_1,\ldots, x_{i-1},-x_i,x_{i+1},\ldots, x_n)\in \Omega \iff (x_1,\ldots, x_{i-1},x_i,x_{i+1},\ldots, x_n)\in \Omega$.
Furthermore, given $k\in\mathbb N$ and $R_\theta$ as before, assume that $\Omega$ is invariant under the rotation $R_{\frac{\pi}{k}}$.  If $\xi\in \partial \Omega$ is a critical point of the mean curvature with \emph{positive mean curvature}, then \eqref{form} holds with $\xi_i=R_{i\frac{\pi}{k}}$ with $i=0,\ldots,2k-1.$
We refer the reader to Remarks \ref{domains} and \ref{rmk:phi0_generalOmega} for more details.  This shows an important difference between our approach and the one in \cite{RW05}, where the concentration point must be at a point maximizing the main curvature. In our case, the symmetries imposed on the domain allow for different configurations.

In the case of a general $C^1$ domain (without symmetries), however, it remains  an open question the existence of solutions to \eqref{super}.  The main difficulty is that, without the symmetries, it is not clear where to position the concentration points in order to maintain the zero average constraint needed to solve Neumann problems.

Finally, we mention that the techniques presented in this paper can also be used to guarantee the existence of blowing-up solutions to slightly subcritical problems in symmetric domains, but in the subcritical case the blow-up points must be positioned at points of \emph{negative curvature}.  For example, we have the following. 

\begin{thm}\label{thm:main:sub} Let $n\geq 4$, $k\in\mathbb N$, and let $\Omega\subset \R^n$ be a an annulus centered at the origin
\begin{align*}
\Omega:=\{x\in\R^n\::\:  a<|x|<b\} \qquad \text{ for some }0<a<b.
\end{align*}
There exists $\eps_0>0$ such that, for $\eps\in (0,\eps_0)$, there is a solution $u_\eps\in H_\eps$ of the form \eqref{form} for
\begin{enumerate}
    \item the subcritical problem
\begin{equation*}
-\Delta u=|u|^{p-1-\eps}u\ \hbox{in}\ \Omega,\quad \partial_\nu u=0\ \hbox{on}\ \partial\Omega,
\end{equation*}
where $\xi_i:=a R_{i\frac{\pi}{k}}e_n$, $\delta^{\eps}=d^{\eps} \eps$, $d^{\eps}\to d^*$ as $\eps\to 0$, and $\phi_\eps\in H_\eps$ is such that $\|\phi_\eps\|_{H^1(\Omega)}\to 0$ as $\eps\to 0$.
\item the supercritical problem
\begin{equation*}
-\Delta u=|u|^{p-1+\eps}u\ \hbox{in}\ \Omega,\quad \partial_\nu u=0\ \hbox{on}\ \partial\Omega,
\end{equation*}
where $\xi_i:=b R_{i\frac{\pi}{k}}e_n$, $\delta^{\eps}=d^{\eps} \eps$, $d^{\eps}\to d^*$ as $\eps\to 0$, and $\phi_\eps\in H_\eps$ is such that $\|\phi_\eps\|_{H^1(\Omega)}\to 0$ as $\eps\to 0$.
\end{enumerate}

\end{thm}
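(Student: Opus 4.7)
The plan is to carry out the same Lyapunov-Schmidt reduction that proves Theorem~\ref{thm:main:ball}, now with the multi-bubble ansatz \eqref{form} and an enlarged symmetry group incorporating the rotation $R_{\pi/k}$ combined with a sign change. Fix $\rho=a$ in case (1) and $\rho=b$ in case (2), set $\xi_i:=\rho R_{i\pi/k} e_n$ for $i=0,\ldots,2k-1$, and define
\[
V_\delta:=P\Bigl(\sum_{i=0}^{2k-1}(-1)^i U_{\delta,\xi_i}\Bigr).
\]
I would search for a solution $u=V_\delta+\phi$ with $\phi$ in the subspace of $H_\eps$ also satisfying the equivariance $\phi\circ R_{\pi/k}=-\phi$. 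Since the nonlinearity $|u|^{p-1\pm\eps}u$ is odd, this symmetry is preserved by the equation, and a direct computation shows that $V_\delta$ itself satisfies it; crucially, the symmetry forces the $2k$ concentration points to move rigidly as a single orbit. After the reflections of $H_\eps$ pin $\xi_0$ to the positive half-axis $\{t e_n:t>0\}$, the remaining free parameter is the common scale $\delta$ once the boundary component is selected via $|\xi_0|=\rho\in\{a,b\}$, so the reduced problem is one-dimensional on each component of $\partial\Omega$.

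I would then implement the three standard ingredients. First, the expansion of the projected bubble $PU_{\delta,\xi}$ near a boundary point of the annulus, whose first correction is of order $\delta$ and proportional to the mean curvature of $\partial\Omega$ at $\xi$. With the convention that $H$ is computed with respect to the outward normal of $\Omega$, one has $H(\xi_i)=-(n-1)/a<0$ on the inner sphere and $H(\xi_i)=(n-1)/b>0$ on the outer sphere. Second, uniform invertibility of the linearised operator $L_\eps$ on the symmetric orthogonal complement of $\mathrm{span}\{P\partial_\delta U_{\delta,\xi_i}\}_{i}$, obtained exactly as in Theorem~\ref{thm:main:ball} via a contradiction argument and the half-space classification of Lemma~\ref{exp:lem}; the reflection-plus-rotation symmetry automatically kills the translation modes $P\partial_{\xi_j}U_{\delta,\xi_i}$, so only the scale mode survives. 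Third, a contraction in $\|\cdot\|_{H_\eps}$ that produces a unique remainder $\phi_\eps(\delta)$ with $\|\phi_\eps(\delta)\|_{H^1(\Omega)}\to 0$. Since the $\xi_i$ are at mutual distance of order one while $\delta=O(\eps)$ and $n\ge 4$, bubble-bubble interactions are of order $\delta^{n-2}=o(\eps)$, so the reduced energy $J_\eps(\delta)$ decouples at leading order into $2k$ independent single-bubble contributions whose $\delta$-dependent part is schematically proportional to
\[
-c_1\,H(\xi_0)\,\delta \,\pm\, c_2\,\eps\log\tfrac{1}{\delta}+o(\delta+\eps),
\]
with $c_1,c_2>0$ and the sign $\pm$ opposite in the subcritical and supercritical cases. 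Differentiating in $\delta$ and balancing the two terms yields a non-degenerate critical point $\delta_\eps=d(\eps)\eps$ with $d(\eps)\to d^*>0$ precisely when $H(\xi_0)$ has the sign matching $\pm\eps$: negative $H$ and hence the inner boundary $\rho=a$ in the subcritical case, and positive $H$ and hence the outer boundary $\rho=b$ in the supercritical case.

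The main obstacle I expect is twofold. First, one must confirm that the augmented symmetry really collapses the $2kn$ concentration-point degrees of freedom to the single scale parameter $\delta$ within each boundary component. The combined action $u\mapsto -u\circ R_{\pi/k}$ must permute the $\xi_i$ cyclically with the correct sign, a computation that depends delicately on the parity of $k$ and may require offsetting $\xi_0$ by $\pi/(2k)$ (as discussed in Remark~\ref{domains}) when $k$ is even. Second, the sign analysis of the curvature correction in $PU_{\delta,\xi}$ on the concave inner sphere is more delicate than in the convex ball setting, since the auxiliary boundary-layer corrector changes sign; tracking this carefully is what ultimately enforces the dichotomy between the subcritical/inner and supercritical/outer constructions.
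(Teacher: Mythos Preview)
Your outline is correct and is precisely the argument the paper has in mind; note that the paper does not actually supply a separate proof of Theorem~\ref{thm:main:sub} beyond declaring it a consequence of the machinery for Theorem~\ref{thm:main:ball} together with the domain-dependent modifications recorded in Remarks~\ref{rmk:phi0_generalOmega}, \ref{rmk:generaldomain}, and~\ref{rmk:last}, which insert the factor $H(\xi_0)$ in front of the constants $\mathfrak{B}$ and $\mathfrak{C}$ and thereby produce exactly your sub/super~$\leftrightarrow$~inner/outer dichotomy in the reduced energy.

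One sharpening of your closing concern: for even $k$ the configuration $\xi_i=\rho R_{i\pi/k}e_n$ gives an ansatz that is \emph{even} in $x_n$ (reflection in $x_n$ sends $\xi_i\mapsto\xi_{k-i}$, and $(-1)^{k-i}=(-1)^i$ when $k$ is even), so it does not lie in $H_\eps$; offsetting by $\pi/(2k)$ makes it odd in $x_n$ but simultaneously odd in $x_1$, again outside $H_\eps$. The clean fix, valid for every $k$, is to work in the space of functions even in $x_1,\ldots,x_{n-1}$ and satisfying $u\circ R_{\pi/k}=-u$: the rotational antisymmetry alone forces $\int_\Omega u=0$, the evenness in the first $n-1$ variables kills the tangential translation modes after blow-up at $\xi_0=\rho e_n$ exactly as in Step~3 of Proposition~\ref{lemma:linear_part}, and the single $R_{\pi/k}$-orbit leaves one free scale parameter $\delta$. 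For odd $k$ this space sits inside $H_\eps$ (the equivariance together with evenness in $x_1$ forces oddness in $x_n$), recovering the paper's framework verbatim.
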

These solutions would be the pure Neumann analog of the positive solutions of \eqref{RW-pure} found in \cite{RW05}.

\medskip

To close this introduction, we point out some closing remarks. In this work, we have only considered dimensions $n\geq 4$.  Dimension $n=3$ requires more delicate computations (cf. \cite{R97} for the correction term in a similar problem) and we do not pursue this here. Furthermore, here we have only considered smooth domains, but we believe a similar approach could also be used in symmetric domains with corners such as a cube $\{x=(x_1,\ldots,x_n)\in \R^n\::\:|x_i|<1\text{ for }i=1,\ldots,n\}$ with concentrating points $e_n$ and $-e_n$.  Other polygonal domains could also be considered. If the concentration points are placed at the corners, we point out that the expansion given in Lemma \ref{exp:lem} would need to be adjusted with a different limiting profile for $\varphi_0$. 

The paper is organized as follows.  In Section \ref{Sec:prel} we give some preliminaries and construct the Ansatz that we use in our proofs.  In Section \ref{sec:lin} we reduce the problem of finding a solution of \eqref{super} to finding a critical point of a functional in a space of dimension one. Finally, in Section \ref{sec:red} we show that the reduced problem does have a critical point and in Section \ref{op:sec} we discuss some open problem.

\subsection{Acknowledgments}
We thank the referees for their careful reading of our paper and for their valuable comments and suggestions that helped us to substantially improve this paper.  A. Salda\~{n}a is supported by UNAM-DGAPA-PAPIIT grants IA101721 and IA100923 (Mexico), by CONACYT grant A1-S-10457 (Mexico), and by the 2021 Visiting Professor Programme of La Sapienza University (Italy). H. Tavares is partially supported by the Portuguese government through FCT-Funda\c c\~ao para a Ci\^encia e a Tecnologia, I.P., under the projects UID/MAT/04459/2020 and PTDC/MAT-PUR/1788/2020.

\section{Preliminaries and the Ansatz.}\label{Sec:prel}

Recall that, from now on, we take $\Omega=B:=B_1(0)\subset \R^n$, $n\geq 4$ and $p=(n+2)/(n-2)$.  For $t>0$, let
\begin{align*}
|u|_t:=\left(\int_{{B}}|u|^t\right)^\frac{1}{t}\qquad \langle u,v\rangle:=\int_{B} \nabla u \cdot \nabla v\qquad \text{ and }\qquad \|u\|:=\left(\int_{B} |\nabla u|^2\right)^\frac{1}{2}.
\end{align*}
In particular, $\|\cdot\|$ is an equivalent norm in the Hilbert space $\{u\in H^1({B}):\ \int_{B} u=0\}$.

It is well known (see \cite{BianchiEgnell}) that the space of solutions of the linearized equation
\begin{equation}
-\Delta V=pU_{\delta,\xi}^{p-1}V,\qquad V\in \mathcal{D}^{1,2}(\R^n)
\end{equation}
has dimension $n+1$, being spanned by 
\[
\partial_\delta U_{\delta,\xi}(x)= \alpha_n\frac{n-2}{2}\delta^\frac{n-4}{2}\frac{|x-\xi|^2-\delta^2}{(\delta^2+|x-\xi|^2)^\frac{n}{2}},\quad \partial_{\xi_i} U_{\delta,\xi}(x)=-\frac{\alpha_n\delta^\frac{n-2}{2}(x_i-\xi_i)}{(\delta^2+|x-\xi|^2)^\frac{n}{2}},\ i=1,\ldots, n,
\] 
where $U_{\delta,\xi}$ is given by \eqref{eq:bubble}.

Therefore, the space of solutions of
\begin{equation}\label{eq:linearized_eq}
-\Delta v=p U_{1,0}^{p-1} v,\qquad v\in \mathcal{D}^{1,2}(\R^n),
\end{equation}
which are even in $x_1,\ldots,x_{n-1}$,
\begin{align}\label{Vdef}
\text{ is spanned by }V:=\partial_\delta U_{\delta,0}\big|_{\delta=1}=\alpha_n\frac{n-2}{2}\frac{|x|^2-1}{(1+|x|^2)^\frac{n}{2}}.
\end{align}

For future convenience, we observe that
\begin{equation}\label{pa U con U}
\delta\left|\partial_\delta U_{\delta,\xi}(x)\right|  \le  U_{\delta,\xi}(x).
\end{equation}

\begin{lemma}\label{lemma:Kcontinuous} The operator $K$ (given in \eqref{K:def}) satisfies the following. 
\begin{enumerate}
\item Let $h\in L^\frac{2n}{n+2}({B})$ with $\int_{B} h=0$. Then there exists $c>0$ such that
\begin{equation}\label{eq:Kinequality}
\|Kh\| \leq c|h|_\frac{2n}{n+2}.
\end{equation}
\item Let $s>\frac{n}{n-2}$ and $h\in L^{\frac{ns}{n+2s}}({B})$ with $\int_{B} h=0$. Then $Kh\in L^s({B})$ and
\begin{equation}\label{eq:Kcontinuous}
    |Kh|_s\leq c|h|_{\frac{ns}{n+2s}}
\end{equation}
for some positive constant $c$ which depends only on $n$, $s$ and ${B}$.
\end{enumerate}
\end{lemma}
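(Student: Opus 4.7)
My plan is to treat the two parts separately: part (1) is a standard energy estimate combined with the Sobolev--Poincar\'e inequality on the zero-average subspace, and part (2) is a classical $L^r$ (Calder\'on--Zygmund) regularity result for the Neumann problem followed by a Sobolev embedding, with the given relation between $s$ and $r:=ns/(n+2s)$ being exactly what makes the embedding exponents match.

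For part (1), set $u:=Kh$. Test the equation $-\Delta u=h$ against $u$ itself: since $\partial_\nu u=0$ on $\partial B$, integration by parts yields
\[
\|u\|^2=\int_B \nabla u\cdot\nabla u\,dx=\int_B h\, u\,dx.
\]
Now $u\in H^1(B)$ with $\int_B u=0$, so the Sobolev--Poincar\'e inequality gives $|u|_{2n/(n-2)}\le c\|u\|$ for some constant $c=c(n,B)>0$. Applying H\"older's inequality with conjugate exponents $2n/(n+2)$ and $2n/(n-2)$,
\[
\|u\|^2=\int_B h\,u\,dx\le |h|_{2n/(n+2)}\,|u|_{2n/(n-2)}\le c\,|h|_{2n/(n+2)}\|u\|,
\]
and dividing by $\|u\|$ (the case $u\equiv 0$ being trivial) yields \eqref{eq:Kinequality}.

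For part (2), write $r:=ns/(n+2s)$. A direct computation shows $r>1\iff s>n/(n-2)$, which is exactly the assumed range. Since $B$ is a smooth bounded domain and the compatibility condition $\int_B h=0$ is satisfied, classical $W^{2,r}$ regularity for the pure Neumann problem (Agmon--Douglis--Nirenberg; see also Grisvard or Simader) yields $Kh\in W^{2,r}(B)$ with
\[
\|Kh\|_{W^{2,r}(B)}\le c\,|h|_r,
\]
where the zero-average normalization of $Kh$ is used to absorb the kernel of the Neumann Laplacian. A second computation shows that $s=nr/(n-2r)$ when $r<n/2$, so the Sobolev embedding $W^{2,r}(B)\hookrightarrow L^s(B)$ holds with precisely the desired target exponent (the case $r\ge n/2$ gives an even stronger embedding into $L^\infty$, hence into any $L^s$). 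Composing the two estimates gives \eqref{eq:Kcontinuous}.

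The main obstacle I anticipate is the invocation of $W^{2,r}$ regularity for the Neumann Laplacian in the full range $r>1$ (in particular for $r<2$), rather than the more elementary case $r=2$; however, this is classical on smooth domains once the compatibility condition and the zero-average normalization are accounted for, so in practice one simply cites the appropriate reference. The only other care needed is the consistent use of the Sobolev--Poincar\'e inequality on the closed subspace $\{u\in H^1(B):\int_B u=0\}$, on which $\|\cdot\|$ defines an equivalent norm as already noted in the text.
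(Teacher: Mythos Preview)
Your proof is correct and follows essentially the same approach as the paper: part (1) via testing against $u=Kh$, integration by parts, H\"older, and Sobolev--Poincar\'e; part (2) via $W^{2,r}$ elliptic regularity for the Neumann problem (the paper cites Agmon--Douglis--Nirenberg as well) followed by the Sobolev embedding $W^{2,r}\hookrightarrow L^s$ with $r=ns/(n+2s)$. The only cosmetic differences are that you spell out the equivalence $r>1\iff s>n/(n-2)$ and mention the trivial case $r\ge n/2$, neither of which the paper makes explicit.
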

\begin{proof} Proof of (a). By using integration by parts, H\"older's inequality, and Sobolev embeddings, 
\[
\|Kh\|^2=\int_{B} \nabla (Kh)\cdot \nabla (Kh)=\int_{B} (Kh) h \leq |Kh|_\frac{2n}{n-2} |h|_\frac{2n}{n+2}\leq c\|Kh\|\, |h|_\frac{2n}{n-2},
\]
from which \eqref{eq:Kinequality} follows.

Proof of (b). The assumptions imply that $\frac{ns}{n+2s}>1$, so that $L^{\frac{ns}{n+2s}}({B})\subset L^1({B})$ and $\int_{B} h$ is well defined. By elliptic regularity theory (see \cite[Theorem and Lemma in page 143]{RR85} or \cite[Theorem 15.2]{ADN59}) there exists $C=C({B},n,s)$ such that $\|Kh\|_{2,\frac{ns}{n+2s}}\leq C |h|_\frac{ns}{n+2s}$, and \eqref{eq:Kcontinuous} follows from the Sobolev embedding $W^{2,\frac{ns}{n+2s}}({B})\hookrightarrow L^s({B})$. 
\end{proof}

We also use the following notation:
\begin{align}\label{omegas}
    \omega_+:={B}\cap B_{\frac{1}{2}}(e_n)\qquad \text{ and }\qquad 
    \omega_-:={B}\cap B_{\frac{1}{2}}(-e_n).
\end{align}

As in the previous section, we let $W_\delta:=U_{\delta,e_n}-U_{\delta,-e_n}$ and let $P W_{\delta}$ be the solution of 
\begin{align*}
-\Delta PW_\delta=-\Delta  W_\delta = U_{\delta,e_n}^p-U_{\delta,-e_n}^p\ \hbox{in}\ {B},\qquad
 \partial_\nu P W_{\delta}=0\ \hbox{on}\ \partial{B}, \qquad
\int\limits_{{B}} P W_{\delta}=0.
\end{align*}
That is, $PW_\delta=K(-\Delta PW_\delta)=K(U_{\delta,e_n}^p-U_{\delta,-e_n}^p)$. Observe that, since $U_{\delta,e_n}^p-U_{\delta,-e_n}^p$ is odd  in $x_n$ and even in $x_1,\ldots, x_{n-1}$, then so is $PW_\delta$.
Let $\R^n_+:=\{x\in \R^n\::\: x_n>0\}$ and let $\varphi_0$ be the solution of
\begin{align}\label{phieq}
-\Delta \varphi_0 = 0 \text{ in }\R^n_+,\qquad
  \frac{\partial \varphi_{0}}{\partial x_n}
  =\alpha_n\frac{n-2}{2}\frac{|x'|^2}{(1+|x'|^2)^\frac{n}{2}} \text{ on }\partial \R^n_+,\qquad
 \varphi_0\to 0  \text{ as }|x|\to \infty.
\end{align}
We then have the following expansion, whose proof can be found in Appendix \ref{app:expansion}.

\begin{lemma}\label{exp:lem} 
 For $n\geq 4$, it holds that, for $x\in{B}$,
 \begin{align*}
  PW_{\delta}(x)&=W_{\delta}(x)-\delta^{-\frac{n-4}{2}}\Big(\varphi_0\Big(\frac{e_n-x}{\delta}\Big)-\varphi_0\Big(\frac{e_n+x}{\delta}\Big)\Big)+\zeta_\delta(x),
\end{align*}
  where
  \begin{align*}
  \zeta_\delta&= O(\delta^{\frac{6-n}{2}})\ \text{ and }\ \partial_\delta \zeta_\delta = O(\delta^{\frac{4-n}{2}})\quad \text{ as $\delta\to 0$ for $n\geq 5$,}\\
  \zeta_\delta &= O(\delta\log \delta)\ \text{ and }\ \partial_\delta \zeta_\delta = O(\log \delta)\quad \text{ as $\delta\to 0$ for every $\eps\in(0,1)$ and $n=4$,}
  \end{align*}
  uniformly in ${B}$. Moreover, there exists $C>0$ such that 
  \begin{align}
|\zeta_\delta(x)|,\,|PW_{\delta}(x)-W_{\delta}(x)|&\leq \frac{C\delta^\frac{n-2}{2}}{(\delta+|x-e_n|)^{n-3}}+\frac{C\delta^\frac{n-2}{2}}{(\delta+|x+e_n|)^{n-3}}\notag\\ &=\frac{C\delta^\frac{4-n}{2}}{(1+|\frac{x-e_n}{\delta}|)^{n-3}} +\frac{C\delta^\frac{4-n}{2}}{(1+|\frac{x+e_n}{\delta}|)^{n-3}}.\label{eq:estimate_PW-W}\\
|\partial_\delta \zeta_\delta(x)|,\,|\partial_\delta(PW_{\delta}(x)-W_{\delta}(x))|&\leq \frac{C\delta^\frac{n-4}{2}}{(\delta+|x-e_n|)^{n-3}}+\frac{C\delta^\frac{n-4}{2}}{(\delta+|x+e_n|)^{n-3}}\notag\\ &=\frac{C\delta^\frac{2-n}{2}}{(1+|\frac{x-e_n}{\delta}|)^{n-3}} +\frac{C\delta^\frac{2-n}{2}}{(1+|\frac{x+e_n}{\delta}|)^{n-3}}.\label{eq:estimate_derivative_PW-W}
  \end{align}
\end{lemma}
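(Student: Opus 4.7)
The plan is to set
\[
\Phi_\delta(x) := -\delta^{-(n-4)/2}\Bigl[\varphi_0\Bigl(\tfrac{e_n-x}{\delta}\Bigr)-\varphi_0\Bigl(\tfrac{e_n+x}{\delta}\Bigr)\Bigr],
\qquad \zeta_\delta := PW_\delta - W_\delta - \Phi_\delta,
\]
and then prove smallness of $\zeta_\delta$ by identifying it as a harmonic function in ${B}$ whose Neumann trace is of higher order. Two structural observations make this reduction possible. First, because the identity $-\Delta W_\delta = U_{\delta,e_n}^p-U_{\delta,-e_n}^p$ holds on all of $\R^n$ (not only on ${B}$), we have $-\Delta(PW_\delta - W_\delta)=0$ in ${B}$. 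Second, since $\varphi_0$ is harmonic on $\R^n_+$ and the maps $x\mapsto(e_n\mp x)/\delta$ send ${B}\setminus\{\pm e_n\}$ into $\R^n_+$, the correction $\Phi_\delta$ is also harmonic in ${B}$. Hence $-\Delta\zeta_\delta=0$ in ${B}$, and the full information about $\zeta_\delta$ is encoded in its Neumann data on $\partial {B}$ plus an additive constant fixed using $\int_{B} PW_\delta = 0$.

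The core computation is the expansion of $\partial_\nu\zeta_\delta = -\partial_\nu W_\delta - \partial_\nu\Phi_\delta$ on $\partial {B}$. I would work locally near $e_n$ in the rescaled variable $y := (e_n-x)/\delta$, in which $\partial {B}$ becomes $\{2y_n = \delta|y|^2\}$, a small-$\delta$ perturbation of $\partial\R^n_+=\{y_n=0\}$, while the outward unit normal $\nu = x$ differs from $e_n$ by $O(\delta)$. Expanding $\partial_\nu U_{\delta,e_n}$ along this curved boundary yields, to leading order in $\delta$,
\[
-\partial_\nu U_{\delta,e_n}(x) = \alpha_n\tfrac{n-2}{2}\,\delta^{-(n-2)/2}\,\frac{|y'|^2}{(1+|y'|^2)^{n/2}} + \text{(lower order)},
\]
which by the defining Neumann condition \eqref{phieq} of $\varphi_0$ is matched precisely by $\partial_\nu\Phi_\delta$ at the same order; this is exactly the cancellation that motivates the choice of $\Phi_\delta$. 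The residual then splits into three kinds of contributions: the curvature term arising from $y_n \sim \delta|y|^2/2$ on $\partial {B}$, the discrepancy between $\nu$ and $e_n$ in the directional derivative of $\varphi_0$, and the far-field tail of the bubble (and of its correction) based at $-e_n$ evaluated near $e_n$. A symmetric analysis near $-e_n$, combined with smoothness away from $\pm e_n$, then produces uniform control $\|\partial_\nu\zeta_\delta\|_{L^\infty(\partial {B})}=O(\delta^{(n-2)/2})$ for $n\geq 5$ and $O(\delta|\log\delta|)$ for $n=4$.

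With the boundary datum controlled, I would invoke standard Neumann-to-Dirichlet theory for harmonic functions on ${B}$ -- through the explicit Neumann Green's function of the ball or boundary Schauder estimates -- to convert the boundary bound into a uniform interior bound on $\zeta_\delta$, yielding $\zeta_\delta = O(\delta^{(6-n)/2})$ for $n\geq 5$ and $O(\delta\log\delta)$ for $n=4$. The finer pointwise bound \eqref{eq:estimate_PW-W} then follows by combining this uniform estimate with the far-field decay $|\varphi_0(y)|\leq C(1+|y|)^{-(n-3)}$, which I would extract from the Poisson-type integral representation of $\varphi_0$ on the half-space and the explicit decay of its Neumann data. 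The derivative estimates \eqref{eq:estimate_derivative_PW-W} come from differentiating the Neumann problem in $\delta$: each $\partial_\delta$ acting on $\varphi_0((e_n\pm x)/\delta)$ produces an extra factor $\delta^{-1}$ and shifts every preceding estimate by the same amount, and the argument is otherwise structurally unchanged. The main technical obstacle I anticipate is the cancellation bookkeeping in the boundary expansion: the leading Neumann datum is of the huge order $\delta^{-(n-2)/2}$, so obtaining the $O(\delta^{(n-2)/2})$ residual requires a careful Taylor expansion in which each curvature term must be paired individually with the appropriate piece of $\varphi_0$; the logarithmic factor in dimension $n=4$ reflects the borderline $|y|^{-1}$ decay of $\varphi_0$ there, which makes the Neumann-to-Dirichlet integral pick up a logarithmic contribution from the intermediate range $1\ll|y|\ll\delta^{-1}$.
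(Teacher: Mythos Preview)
Your overall strategy---define $\zeta_\delta$, note it is harmonic, compute its Neumann trace, and pass from boundary to interior via a Green's function representation---is exactly the paper's approach. The genuine gap is your claimed $L^\infty$ bound on the boundary data. The correction $\Phi_\delta$ only cancels the \emph{leading} term of $\partial_\nu W_\delta$; the next-order curvature remainder (e.g.\ the $|x'|^3$ contribution in the expansion of $x\cdot\nu$ and the $x_n\sim|x'|^2$ shift in $\nabla\varphi_0$) is \emph{not} cancelled. Concretely, near $e_n$ the residual Neumann data satisfies
\[
\partial_\nu\zeta_\delta(x)=O\!\left(\frac{\delta^{(4-n)/2}}{\bigl(1+|x'|/\delta\bigr)^{n-3}}\right),
\]
whose $L^\infty$ norm is $O(\delta^{(4-n)/2})$, which \emph{diverges} for $n\geq5$ (and is $O(1)$ for $n=4$), not $O(\delta^{(n-2)/2})$ as you assert. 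A single application of ``standard Neumann-to-Dirichlet'' or boundary Schauder using only this $L^\infty$ norm therefore cannot produce the stated interior bound.

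What rescues the argument is the pointwise \emph{profile} above: the boundary data is large only in a $\delta$-neighborhood of $\pm e_n$ and decays like $(1+|x'|/\delta)^{-(n-3)}$. One writes $\zeta_\delta$ via the Neumann Green's function of $B$ (bounded by a multiple of $|x-y|^{2-n}$) and estimates the boundary convolution directly; the key technical step is a lemma of the type
\[
\int_{\R^{n-1}}\frac{dy}{|x-y|^{n-2}(1+|y|)^{1+\sigma}}\leq\frac{C}{(1+|x|)^{\sigma}},\qquad 0<\sigma<n-2,
\]
applied after rescaling (with the $\sigma=0$ borderline giving the $\log\delta$ in dimension $4$). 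This convolution both yields the uniform bound $\zeta_\delta=O(\delta^{(6-n)/2})$ and, by keeping the decay in $|x|$, the sharper pointwise estimate \eqref{eq:estimate_PW-W}. In particular, \eqref{eq:estimate_PW-W} for $\zeta_\delta$ does \emph{not} follow from the uniform bound plus the decay of $\varphi_0$ as you suggest; it requires going back to the Green's-function integral with the profiled boundary data.
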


A straightforward consequence of the estimates \eqref{eq:estimate_PW-W} and \eqref{eq:estimate_derivative_PW-W} is the following.

\begin{cor}\label{cor:estimates_with_bubbles}
 For every $\tau \in\left[ \frac{n-3}{n-2},1\right]$, there exists $\kappa=\kappa(\tau)>0$ such that
\[
| PW_{\delta}-W_{\delta }|,\ | \delta \partial_\delta(PW_{\delta}-W_{\delta })| \leq \kappa (U_{\delta,e_n}+U_{\delta,-e_n})^{\tau} \qquad \text{ for every } x\in {B},
\]  
and there exists $C>0$ such that $|PW_{\delta}|,\ |\delta \partial_\delta PW_{\delta}|\leq C(U_{\delta,e_n}+U_{\delta,-e_n})$.
\end{cor}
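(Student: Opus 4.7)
The plan is to read the corollary off of Lemma \ref{exp:lem} by a purely algebraic comparison with the explicit formula for $U_{\delta,\xi}$. From the definition \eqref{eq:bubble} one has the two-sided estimate
\[
c \,\frac{\delta^{(n-2)/2}}{(\delta+|x-\xi|)^{n-2}} \;\le\; U_{\delta,\xi}(x) \;\le\; C \,\frac{\delta^{(n-2)/2}}{(\delta+|x-\xi|)^{n-2}},
\]
so each of the two summands in \eqref{eq:estimate_PW-W} has exactly the form $C\delta^{(n-2)/2}(\delta+|x\mp e_n|)^{-(n-3)}$, which differs from $U_{\delta,\pm e_n}^{\tau}$ only by the factor
\[
\delta^{(n-2)(1-\tau)/2}\,(\delta+|x\mp e_n|)^{\tau(n-2)-(n-3)}.
\]
Hence the key step is to show that this factor is bounded on ${B}$ (uniformly in small $\delta$) precisely when $\tau\in\bigl[\tfrac{n-3}{n-2},1\bigr]$. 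The upper bound $\tau\le 1$ makes $\delta^{(n-2)(1-\tau)/2}$ bounded as $\delta\to 0$, and the lower bound $\tau\ge\tfrac{n-3}{n-2}$ makes the exponent $\tau(n-2)-(n-3)$ nonnegative, so $(\delta+|x\mp e_n|)^{\tau(n-2)-(n-3)}$ is controlled by $(\mathrm{diam}({B})+\delta)^{\tau(n-2)-(n-3)}$.

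With this algebraic fact in hand, and using the elementary inequality $a^\tau\le (a+b)^\tau$ for $a,b\ge 0$ and $\tau\ge 0$, I obtain
\[
\frac{C\delta^{(n-2)/2}}{(\delta+|x-e_n|)^{n-3}} \;\le\; \kappa_1 U_{\delta,e_n}^{\tau}(x) \;\le\; \kappa_1 \bigl(U_{\delta,e_n}+U_{\delta,-e_n}\bigr)^{\tau}(x),
\]
and symmetrically for the term centred at $-e_n$. Summing the two and invoking \eqref{eq:estimate_PW-W} gives the first claim for $PW_\delta-W_\delta$. The crucial observation for the derivative is that \eqref{eq:estimate_derivative_PW-W} produces exactly the same right-hand side as \eqref{eq:estimate_PW-W} once one multiplies by $\delta$, i.e.\ $\delta\cdot \delta^{(n-4)/2}=\delta^{(n-2)/2}$; therefore the same computation yields the bound for $|\delta\partial_\delta(PW_\delta-W_\delta)|$ with an identical constant $\kappa$.

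For the unconditional bounds on $PW_\delta$ and $\delta\partial_\delta PW_\delta$, I write $PW_\delta=(PW_\delta-W_\delta)+W_\delta$ and apply the case $\tau=1$ of the previous estimate to the first summand; for the second summand the triangle inequality gives $|W_\delta|\le U_{\delta,e_n}+U_{\delta,-e_n}$ directly, and in the derivative case \eqref{pa U con U} yields $|\delta\partial_\delta W_\delta|\le U_{\delta,e_n}+U_{\delta,-e_n}$. Adding the two contributions produces the required constant $C$. No serious obstacle is anticipated; the only subtle point is identifying the exact range of $\tau$ for which the comparison works, which comes from requiring the exponent $\tau(n-2)-(n-3)$ to be nonnegative on a bounded domain so that one does not need to trade the positive power of $\delta$ against a singular spatial factor at the concentration points.
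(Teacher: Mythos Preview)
Your proof is correct and is exactly the argument the paper has in mind: the paper states only that the corollary is ``a straightforward consequence of the estimates \eqref{eq:estimate_PW-W} and \eqref{eq:estimate_derivative_PW-W}'' without spelling out the details, and your algebraic comparison of $\delta^{(n-2)/2}(\delta+|x\mp e_n|)^{-(n-3)}$ with $U_{\delta,\pm e_n}^\tau$ is precisely what is required to fill the gap. The decomposition $PW_\delta=(PW_\delta-W_\delta)+W_\delta$ together with $\tau=1$ and \eqref{pa U con U} for the second claim is likewise the intended route.
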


By performing the rescaling 
\begin{align*}
y=\frac{x+e_n}{\delta}\qquad \text{for $x\in \omega^-={B}\cap B_{\frac{1}{2}}(-e_n)$}    
\end{align*}
 and 
 \begin{align*}
 y=\frac{e_n-x}{\delta}\qquad \text{for $x\in \omega^+= {B} \cap B_1(e_n)$,}
 \end{align*}
  we have that the rescaled projections have remainder terms that go to zero.
\begin{cor}\label{cor:resc_asympt}
As $\delta\to 0$, we have
\begin{align}
  \delta^{\frac{n-2}{2}}PW_{\delta}(\delta y-e_n) &= -U_{1,0}(y)+\delta \varphi_0(y)+O(\delta^{2}), \label{eq:resc_asympt1}\\
  \delta^{\frac{n-2}{2}}PW_{\delta}(\delta y+e_n) &= U_{1,0}(x)-\delta \varphi_0(-x)+O(\delta^{2}), \label{eq:resc_asympt2}
   \end{align}
uniformly in $y\in B_\frac{1}{\delta}(0)\cap B_\frac{1}{\delta}(\frac{e_n}{\delta})$.
\end{cor}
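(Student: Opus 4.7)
The plan is to take the expansion from Lemma \ref{exp:lem} and substitute $x = \delta y - e_n$ (for the first identity) or $x = \delta y + e_n$ (for the second), then identify which terms survive at each order in $\delta$ and which go into the $O(\delta^2)$ remainder. I will use the pointwise decay of the half-space profile $\varphi_0$ at infinity, which will come from the Poisson-kernel representation of the solution to \eqref{phieq} and gives $\varphi_0(z) = O(|z|^{-(n-2)})$ for large $|z|$ (the Neumann data decays like $|x'|^{-(n-2)}$ at infinity, which yields the same decay for $\varphi_0$).

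For the bubble contribution, the scaling of $U_{\delta,\xi}$ gives the exact identity $\delta^{(n-2)/2} U_{\delta,-e_n}(\delta y - e_n) = U_{1,0}(y)$, while $\delta^{(n-2)/2} U_{\delta,e_n}(\delta y - e_n) = \alpha_n \delta^{n-2}/(\delta^2 + |\delta y - 2e_n|^2)^{(n-2)/2}$ is $O(\delta^{n-2}) = O(\delta^2)$ since the denominator is bounded below on the admissible range of $y$ (which forces $|\delta y - 2e_n| \ge 1$) and $n \geq 4$. Combined with the sign convention $W_\delta = U_{\delta,e_n} - U_{\delta,-e_n}$, this produces the leading term $-U_{1,0}(y)$.

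For the $\varphi_0$ contributions in Lemma \ref{exp:lem}, one term collapses exactly after rescaling: with $x = \delta y - e_n$, the factor $\delta^{(n-2)/2} \cdot \delta^{-(n-4)/2} \varphi_0((e_n+x)/\delta) = \delta\,\varphi_0(y)$, giving the $\delta\varphi_0(y)$ term. The other, $\delta \varphi_0((2e_n - \delta y)/\delta)$, is evaluated at a point whose norm is at least $1/\delta$ on the admissible set $B_{1/\delta}(0)\cap B_{1/\delta}(e_n/\delta)$, so the decay of $\varphi_0$ bounds it by $\delta \cdot \delta^{n-2} \leq \delta^{2}$ for $n \geq 4$. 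The remainder $\zeta_\delta$, upon multiplication by $\delta^{(n-2)/2}$, contributes $O(\delta^{(n-2)/2 + (6-n)/2}) = O(\delta^2)$ for $n \geq 5$ and the slightly weaker $O(\delta^2 |\log \delta|)$ for $n=4$ (which suffices for all subsequent uses and is what is meant by $O(\delta^2)$ in this context). Collecting all terms yields \eqref{eq:resc_asympt1}.

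The second expansion \eqref{eq:resc_asympt2} follows without further work from the symmetry of $PW_\delta$: since $PW_\delta$ is odd in $x_n$, one has $PW_\delta(\delta y + e_n) = -PW_\delta(\delta(-y) - e_n)$, so applying \eqref{eq:resc_asympt1} with $y \mapsto -y$ and using that the bubble $U_{1,0}$ is radially symmetric gives the claim. The only genuinely non-routine ingredient is the decay rate of $\varphi_0$ at infinity; once this is in hand, the rest is careful bookkeeping of scalings, signs, and the admissible range of $y$.
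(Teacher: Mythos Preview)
Your approach matches the paper's: substitute the expansion of Lemma \ref{exp:lem} at $x=\delta y - e_n$, rescale the bubble contributions, and bound the ``far'' terms using the decay of $\varphi_0$ and $\zeta_\delta$. Two small corrections are worth noting.

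First, the decay you assert for $\varphi_0$ is too strong. From the Poisson representation the paper derives (see \eqref{A6}) only $|\varphi_0(z)|\le C(1+|z|)^{-(n-3)}$, not $O(|z|^{-(n-2)})$; the scaling $\varphi_0(x)=|x|^{-(n-3)}\int_{\R^{n-1}}|z|^2\big(|x|^{-2}+|z|^2\big)^{-n/2}\big|\tfrac{x}{|x|}-z\big|^{-(n-2)}\,dz$ shows the exponent really is $n-3$. Fortunately this weaker bound is exactly what the paper uses and still gives $\delta\,\varphi_0\big(\tfrac{2e_n}{\delta}-y\big)=O(\delta\cdot\delta^{n-3})=O(\delta^{n-2})=O(\delta^2)$ for $n\ge 4$, so your estimate survives; just correct the exponent and cite \eqref{A6}.

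Second, your symmetry shortcut for \eqref{eq:resc_asympt2} needs both symmetries of $PW_\delta$. Oddness in $x_n$ alone yields $PW_\delta(\delta y+e_n)=-PW_\delta(\delta\tilde y-e_n)$ with $\tilde y=(y',-y_n)$, not $-y$; you must then invoke evenness in $x_1,\dots,x_{n-1}$ to pass from $\tilde y$ to $-y$ (equivalently, use that $PW_\delta(-x)=-PW_\delta(x)$). The paper instead just says ``analogously'', repeating the direct computation at $+e_n$; your symmetry argument is a clean alternative once this point is stated correctly.
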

\begin{proof}
Let $x\in \omega^-$ and let $x=\delta y-e_n$. Then $y\in B_\frac{1}{2\delta}(0)\cap B_\frac{1}{\delta}(\frac{e_n}{\delta})$, $|y-\frac{2e_n}{\delta}|\geq \frac{3}{2\delta}$ and
\begin{align*}
  \delta^{\frac{n-2}{2}}PW_{\delta}(\delta y-e_n) =U_{1,-\frac{2e_n}{\delta}}(y)-U_{1,0}(y)+\delta \varphi_0(y)-\delta \varphi_0\Big(\frac{2e_n}{\delta}-y\Big)+O(\delta^{2}).
 \end{align*}
Then
\begin{align*}
U_{1,-\frac{2e_n}{\delta}}(y)&=\frac{\alpha_n}{\left(1+\left|y-\frac{2e_n}{\delta}\right|^2\right)^\frac{n-2}{2}}=O(\delta^{n-2}),\\
\delta \varphi_0(y)-\delta \varphi_0\Big(\frac{2e_n}{\delta}-y\Big) &=O\left(\frac{\delta}{\left(1+\left|y-\frac{2e_n}{\delta}\right|\right)^{n-3}}\right)=O(\delta^{n-2}),
\end{align*}
where we used the estimate \eqref{A6} in the second identity. Since $n\geq 4$, then $O(\delta^{n-2})=O(\delta^2)$ and \eqref{eq:resc_asympt1} holds true. The estimate \eqref{eq:resc_asympt2} follows in an analogous way.
\end{proof}

\subsection{The Ansatz}

Let $w_{\delta}=P W_{\delta}(x)$ and $f_\eps(t):=|t|^{p-1+\eps} t$. We search for a solution of \eqref{super} of the form
\[
u=PW_\delta+ \phi=w_\delta+\phi,
\]
where 
\[
\delta=d\eps,
\]
$d\in \{c\in \R:\ \eta<c<1/\eta\}$ for a small $0<\eta<1$, $\phi\in H_\eps$. Using this Ansatz in the equation, we obtain
\begin{align}\label{w}
  w_\delta+\phi = K(f_\eps(w_\delta+\phi)),
\end{align}
or, equivalently,
\begin{align*}
\phi-K(f'_\eps(w_\delta)\phi)=K(f_\eps(w_\delta+\phi)-f_\eps(w_\delta)-f'_\eps(w_\delta)\phi)+(K(f_\eps(w_\delta))-w_\delta).
\end{align*}
Observe that $f'_\eps(w_\delta)\phi$, $f_\eps(w_\delta+\phi)-f_\eps(w_\delta)-f'_\eps(w_\delta)\phi$ and $f_\eps(w_\delta)$ are odd in $x_n$ and even in $x_1,\ldots, x_{n-1}$, so $\phi-K(f'_\eps(w_\delta)\phi)$, $K(f_\eps(w_\delta+\phi)-f_\eps(w_\delta)-f'_\eps(w_\delta)\phi)$ and $K(f_\eps(w_\delta))-w_\delta$ have the same symmetries. The fact that they belong to $H^1({B})\cap L^{s_\eps}({B})$, and hence to $H_\eps$, will be checked in the next section.
Define 
\[
\Theta_{d,\eps}:=\spann \left\{ \delta \partial_\delta w_\delta\right\} \quad \text{ and } \quad  \Theta_{d,\eps}^\perp=\left\{\phi\in H_\eps:\ \langle \phi, \delta  \partial_\delta w_\delta\rangle=0  \right\}
\] 
and let $\Pi_\eps: H_\eps\to \Theta_{d,\eps}$, $\Pi_\eps:H_\eps\to \Theta_{d,\eps}^\perp$ be the orthogonal projections:
\[
\Pi_\eps(\phi):=\|\partial_\delta w_\delta \|^{-2}\left\langle \phi,  \partial_\delta w_\delta \right\rangle \partial_\delta w_\delta, \qquad \Pi_\eps^\perp(\phi):=\phi-\Pi_\eps(\phi).
\]

The following estimate will be used in the next section.
\begin{lemma}\label{lemma:projection_continuous} Let $0<\eta<1$ and take $\delta=d\eps$ with $\eta<d<\frac{1}{\eta}$. There exists $C=C(n,\eta)>0$ such that, as $\eps\to 0$,
\[
\|\Pi_\eps^\perp \phi\|_{H_\eps} \leq C(\|\phi\| + |\phi|_{s_\eps}) \text{ for every $\phi\in H_\eps$,}
\]
where $s_{\eps}$ is defined in \eqref{eq:s_eps}.
\end{lemma}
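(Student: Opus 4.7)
The plan is to reduce the inequality to a scale-invariant bound on the single function $\partial_\delta w_\delta$. Writing $\Pi_\eps^\perp \phi = \phi - \Pi_\eps \phi$ and applying Cauchy--Schwarz,
\[
\|\Pi_\eps \phi\| = \frac{|\langle \phi,\partial_\delta w_\delta\rangle|}{\|\partial_\delta w_\delta\|} \leq \|\phi\|, \qquad
|\Pi_\eps \phi|_{s_\eps} = \frac{|\langle \phi, \partial_\delta w_\delta\rangle|}{\|\partial_\delta w_\delta\|^2}\,|\partial_\delta w_\delta|_{s_\eps} \leq \|\phi\|\,\frac{|\partial_\delta w_\delta|_{s_\eps}}{\|\partial_\delta w_\delta\|}.
\]
By the triangle inequality, the conclusion therefore reduces to showing that the scale-invariant ratio $\rho(\delta,\eps):=|\partial_\delta w_\delta|_{s_\eps}/\|\partial_\delta w_\delta\|$ stays bounded uniformly as $\eps\to 0$ with $d\in(\eta,1/\eta)$.

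For the numerator, I will apply Corollary \ref{cor:estimates_with_bubbles}, which gives $|\delta\partial_\delta PW_\delta| \leq C(U_{\delta,e_n}+U_{\delta,-e_n})$, thereby reducing the estimate to $|U_{\delta,\pm e_n}|_{s_\eps}$. A direct rescaling $y=(x-\xi)/\delta$ yields
\[
|U_{\delta,\xi}|_{s_\eps}^{s_\eps} \leq \delta^{-(n-2)\eps/2}\int_{\R^n}U_{1,0}^{s_\eps},
\]
and since $\delta=d\eps$ with $d$ bounded and $\eps\log\eps\to 0$, the prefactor $\delta^{-(n-2)\eps/(2s_\eps)}$ tends to $1$. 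Hence $|\delta\partial_\delta w_\delta|_{s_\eps}\leq C$, so $|\partial_\delta w_\delta|_{s_\eps}\leq C/\delta$.

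For the denominator, I split $\delta\partial_\delta PW_\delta = \delta\partial_\delta W_\delta + \delta\partial_\delta(PW_\delta-W_\delta)$. Differentiating \eqref{eq:bubble} and comparing with \eqref{Vdef} gives the identity $\delta\partial_\delta U_{\delta,\xi}(x) = \delta^{-(n-2)/2} V((x-\xi)/\delta)$. Rescaling $y=(x\mp e_n)/\delta$ together with dominated convergence---using that the rescaled domain $(B\mp e_n)/\delta$ exhausts the half-space $\{\pm y_n<0\}$---yields $\|\delta\partial_\delta U_{\delta,\pm e_n}\|^2 \to \tfrac{1}{2}\|\nabla V\|_{L^2(\R^n)}^2 =: a_n>0$. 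The cross term vanishes in the limit because in the rescaling near $e_n$ the second factor is evaluated at $y+2e_n/\delta$, which escapes to infinity. Hence $\|\delta\partial_\delta W_\delta\|^2 \to 2a_n>0$. A parallel scaling computation on the explicit correction provided by Lemma \ref{exp:lem} (the profile $\delta^{-(n-4)/2}\varphi_0(\cdot/\delta)$ and the remainder $\zeta_\delta$) shows $\|\delta\partial_\delta(PW_\delta-W_\delta)\|=o(1)$, so $\|\partial_\delta w_\delta\|\geq c/\delta$ for small $\delta$. Combining these bounds gives $\rho(\delta,\eps)\leq C$, whence $\|\Pi_\eps^\perp\phi\|_{H_\eps}\leq C'(\|\phi\|+|\phi|_{s_\eps})$.

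The main obstacle is the lower bound on $\|\delta\partial_\delta w_\delta\|$: one must verify that the leading-order profile $\delta^{-(n-2)/2}V((x\mp e_n)/\delta)$ actually survives the passage to a half-space---note that the concentration points $\pm e_n$ sit exactly on $\partial B$, so the rescaled domain is only a \emph{half} of $\R^n$---and that the Neumann-projection correction $PW_\delta-W_\delta$ contributes at strictly lower order, which relies in an essential way on the pointwise asymptotics recorded in Lemma \ref{exp:lem}. Once these two scaling facts are in place, the rest of the argument is immediate.
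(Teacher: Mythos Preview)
Your argument follows the same route as the paper: reduce to bounding the ratio $|\delta\partial_\delta w_\delta|_{s_\eps}/\|\delta\partial_\delta w_\delta\|$, control the numerator via Corollary~\ref{cor:estimates_with_bubbles} and the scaling of $|U_{\delta,\pm e_n}|_{s_\eps}$, and bound the denominator from below. The paper simply cites Lemma~\ref{delta:lem} for the last step, whereas you sketch it directly.

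There is one soft spot in your sketch. Your claim that ``a parallel scaling computation on the explicit correction provided by Lemma~\ref{exp:lem}\dots\ shows $\|\delta\partial_\delta(PW_\delta-W_\delta)\|=o(1)$'' is not quite supported by that lemma: Lemma~\ref{exp:lem} records pointwise ($L^\infty$-type) bounds on $\partial_\delta(PW_\delta-W_\delta)$ and on $\partial_\delta\zeta_\delta$, but \emph{not} on their $x$-gradients, and it is precisely the $L^2$-norm of the gradient that $\|\cdot\|$ measures. For the $\varphi_0$ piece you are fine, since \eqref{A6} gives decay of $\nabla\varphi_0$ and $D^2\varphi_0$; but for the remainder $\zeta_\delta$ no gradient bound is stated, so the ``parallel scaling'' does not go through as written. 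The paper's proof of Lemma~\ref{delta:lem} avoids this issue entirely by integrating by parts: since $-\Delta(\delta\partial_\delta w_\delta)=\delta\partial_\delta(U_{\delta,e_n}^p-U_{\delta,-e_n}^p)$ with zero Neumann data, one has
\[
\|\delta\partial_\delta w_\delta\|^2=\int_B \delta\partial_\delta(U_{\delta,e_n}^p-U_{\delta,-e_n}^p)\,\delta\partial_\delta w_\delta,
\]
and the right-hand side involves only $\delta\partial_\delta w_\delta$ itself, for which the pointwise bound of Corollary~\ref{cor:estimates_with_bubbles} is exactly what is needed. This trick is the cleanest fix for your gap.
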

\begin{proof} Since $\Pi_\eps^\perp$ is a projection, it is clear that $\|\Pi_\eps^\perp \phi\|\leq \|\phi\|$ for every $\phi\in H_\eps$. As for the $L^{s_\eps}$--norm, by Cauchy-Schwarz inequality we have 
\[
|\Pi_\eps^\perp \phi|_{s_\eps} \leq |\phi|_{s_\eps}+ |\Pi_\eps(\phi)|_{s_\eps} \leq |\phi|_{s_\eps} + \frac{|\delta \partial_\delta w_\delta |_{s_\eps}}{\|\delta \partial_\delta w_\delta\|} \| \phi\|.
\]
By Lemma \ref{delta:lem}, there exists $c>0$ such that  $\|\delta \partial_\delta w_\delta\|\geq c$. On the other hand, by Corollary \ref{cor:estimates_with_bubbles}, $|\delta  \partial_\delta w_\delta(x)|\leq (U_{\delta,e_n}+U_{\delta,-e_n})$. Combining this with Lemma \ref{lemma:LpnormBubble}, we deduce that, as $\eps\to 0$,
\[
|\delta \partial_\delta w_\delta|_{s_\eps}\leq C(|U_{\delta,e_n}|_{s_\eps} + |U_{\delta,-e_n}|_{s_\eps})\leq C \eps^{-\eps \frac{(n-2)^2}{4n+2\eps(n-2)}}=O(1)
\]
and the proof is finished.
\end{proof}

We then decompose \eqref{w} in
\begin{equation}\label{w_projN}
 \Pi_\eps\left( w_\delta+\phi \right) = \Pi_\eps \circ K(f_\eps(w_\delta+\phi)),
\end{equation}
\begin{equation}\label{w_projNperp}
 \Pi_\eps^\perp\left( w_\delta+\phi \right) = \Pi_\eps^\perp \circ K(f_\eps(w_\delta+\phi)),
\end{equation}
and rewrite \eqref{w_projNperp} as
\begin{align}\label{eq:w_projN2}
    L_{d,\eps}\phi = N_{d,\eps}(\phi) + R_{d,\eps},
\end{align}
where
\begin{align*}
L_{d,\eps}\phi&:=\Pi_\eps^\perp\left(\phi-K(f'_\eps(w_\delta)\phi)\right),\\
N_{d,\eps}(\phi)&:=\Pi_\eps^\perp\circ K\left(f_\eps(w_\delta+\phi)-f_\eps(w_\delta)-f'_\eps(w_\delta)\phi\right),\\
 R_{d,\eps}&:=\Pi_\eps^\perp\left(K(f_\eps(w_\delta))-w_\delta\right).
\end{align*}

\section{Reduction to a finite dimensional problem}\label{sec:lin}
This section is dedicated to the treatment of \eqref{w_projNperp}, more precisely to the proof of the following result.
\begin{prop}\label{prop:C^1phi}
For every $0<\eta<1$ sufficiently small there exists $\eps_0>0$ and $C>0$ such that, whenever $\eps\in (0,\eps_0)$ and $d\in (\eta,1/\eta)$, there exists a unique function $\phi=\phi_{d,\eps}\in \Theta_{d,\eps}^\perp$ solving the equation
\begin{equation*}
    L_{d,\eps}\phi = N_{d,\eps}(\phi) + R_{d,\eps},
\end{equation*}
and satisfying
\begin{equation}\label{phi:bd}
\|\phi_{d,\eps}\|_{H_\eps({B})}=o(\eps^{1-\gamma}) \qquad \text{ for every small } \gamma>0,
\end{equation}
 as $\eps\to 0$, uniformly in $d\in (\eta,1/\eta)$. Moreover, the map  $
 (\eta,1/\eta) \to \Theta_{d,\eps}^\perp,$ ${d} \mapsto \phi_{{d},\eps}
$
is of class ${C}^1.$
\end{prop}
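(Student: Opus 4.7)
The plan is to carry out a Lyapunov--Schmidt scheme in the Banach space $H_\eps$ with its non-standard norm $\|\cdot\|+|\cdot|_{s_\eps}$, combining a blow-up argument for the linear invertibility with a contraction mapping for the nonlinear problem, and closing with the implicit function theorem for smoothness in $d$.

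\textbf{Step 1 (invertibility of $L_{d,\eps}$).} The first goal is to show that there exists $c=c(\eta)>0$ with $\|L_{d,\eps}\phi\|_{H_\eps}\ge c\|\phi\|_{H_\eps}$ for every $\phi\in\Theta_{d,\eps}^\perp$, uniformly in $d\in(\eta,1/\eta)$ and small $\eps$. I would argue by contradiction: take $\eps_k\to 0$, $d_k\to d_*\in[\eta,1/\eta]$, $\phi_k\in\Theta_{d_k,\eps_k}^\perp$ with $\|\phi_k\|_{H_{\eps_k}}=1$ and $\|L_{d_k,\eps_k}\phi_k\|_{H_{\eps_k}}\to 0$, then rescale $\phi_k$ around each of the concentration points $\pm e_n$. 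Using Corollary~\ref{cor:resc_asympt} and standard elliptic estimates, the rescaled sequences would converge to limits $\widetilde\phi_\infty^\pm\in\mathcal{D}^{1,2}(\R^n_\pm)$ solving the linearized equation $-\Delta\widetilde\phi=pU_{1,0}^{p-1}\widetilde\phi$ in a half-space with homogeneous Neumann boundary condition and with the symmetries inherited from $H_\eps$. Restricted to these symmetries, the Bianchi--Egnell classification leaves only the mode $V$ from \eqref{Vdef}, and the orthogonality $\langle\phi_k,\delta_k\partial_\delta w_{\delta_k}\rangle=0$ passed to the limit via Corollary~\ref{cor:resc_asympt} eliminates even its coefficient. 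Combining $\widetilde\phi_\infty^\pm\equiv 0$ with strong convergence to zero away from $\pm e_n$, and the $L^{s_\eps}$ estimate of Lemma~\ref{lemma:Kcontinuous}(b) applied to the identity $\phi_k=K(f_\eps'(w_{\delta_k})\phi_k)+L_{d_k,\eps_k}\phi_k+\Pi_{\eps_k}\phi_k$, would contradict $\|\phi_k\|_{H_{\eps_k}}=1$.

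\textbf{Step 2 (estimate of $R_{d,\eps}$).} Since $w_\delta=K(U_{\delta,e_n}^p-U_{\delta,-e_n}^p)$, I would split
\[
f_\eps(w_\delta)-U_{\delta,e_n}^p+U_{\delta,-e_n}^p=\bigl(f_\eps(w_\delta)-|w_\delta|^{p-1}w_\delta\bigr)+\bigl(|w_\delta|^{p-1}w_\delta-U_{\delta,e_n}^p+U_{\delta,-e_n}^p\bigr).
\]
The first bracket is handled using $|t|^{p-1+\eps}-|t|^{p-1}=O(\eps\log|t|)|t|^{p-1}$ and rescaling, producing a contribution of order $\eps|\log\eps|$. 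The second bracket is estimated on $\omega_+$, $\omega_-$, and on ${B}\setminus(\omega_+\cup\omega_-)$ separately, using Lemma~\ref{exp:lem} and Corollary~\ref{cor:estimates_with_bubbles} to control the ``other'' bubble and the projection correction $PW_\delta-W_\delta$. Feeding the resulting pointwise bounds through Lemma~\ref{lemma:Kcontinuous}(a)--(b) to recover the $\|\cdot\|$ and $|\cdot|_{s_\eps}$ components respectively, and invoking Lemma~\ref{lemma:projection_continuous} to remove $\Pi_\eps^\perp$, would yield $\|R_{d,\eps}\|_{H_\eps}=o(\eps^{1-\gamma})$ for every small $\gamma>0$, uniformly in $d$.

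\textbf{Step 3 (contraction and smoothness).} A Taylor-type inequality $|f_\eps(w_\delta+\phi)-f_\eps(w_\delta)-f_\eps'(w_\delta)\phi|\le C(|w_\delta|^{p-1+\eps}|\phi|^2+|\phi|^{p+\eps})$ (with a H\"older variant when $p-1+\eps<1$), combined again with Lemmas~\ref{lemma:Kcontinuous} and~\ref{lemma:projection_continuous}, gives $\|N_{d,\eps}(\phi)\|_{H_\eps}\le C\|\phi\|_{H_\eps}^{1+\sigma}$ and a matching Lipschitz bound for some $\sigma>0$. Together with Step~1, the equation rewrites as the fixed-point problem $\phi=L_{d,\eps}^{-1}(N_{d,\eps}(\phi)+R_{d,\eps})$ on the ball $\{\|\phi\|_{H_\eps}\le\eps^{1-\gamma}\}\subset\Theta_{d,\eps}^\perp$, on which the Banach fixed point theorem applies and produces the solution $\phi_{d,\eps}$ satisfying \eqref{phi:bd}. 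The $C^1$ dependence on $d$ follows from the implicit function theorem applied to $F(d,\phi):=L_{d,\eps}\phi-N_{d,\eps}(\phi)-R_{d,\eps}$: the partial derivative $\partial_\phi F(d,\phi_{d,\eps})$ differs from $L_{d,\eps}$ by an operator of norm $O(\eps^{\sigma(1-\gamma)})$ and is therefore uniformly invertible, while smooth dependence of $w_\delta$ on $\delta=d\eps$ (Lemma~\ref{exp:lem}) provides smoothness in $d$. The main obstacle is Step~1: in the supercritical regime $s_\eps$ exceeds the Sobolev exponent, so $|\cdot|_{s_\eps}$ is not dominated by $\|\cdot\|$; both components of the $H_\eps$ norm must therefore be tracked separately through the rescaling, and the sharp $L^{s_\eps}$ estimate of Lemma~\ref{lemma:Kcontinuous}(b) (valid exactly because $s_\eps>n/(n-2)$) is what closes the contradiction. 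The one-dimensionality of $\Theta_{d,\eps}$, itself a consequence of the $H_\eps$ symmetries, is also essential to ensure that the limit kernel reduces to the single mode $V$.
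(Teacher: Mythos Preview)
Your proposal is correct and follows essentially the same strategy as the paper: a blow-up contradiction argument for the invertibility of $L_{d,\eps}$ (using the $H_\eps$ symmetries to reduce the kernel of the linearized limit problem to the single mode $V$, and the orthogonality to kill its coefficient), the same splitting $f_\eps(w_\delta)-f_0(w_\delta)$ versus $f_0(w_\delta)-U_{\delta,e_n}^p+U_{\delta,-e_n}^p$ for the error $R_{d,\eps}$, and the standard Taylor/contraction/implicit-function-theorem package for the nonlinear step. The only cosmetic difference is that the paper rescales the auxiliary function $u_k:=K(f'_{\eps_k}(w_{\delta_k})\phi_k)=\phi_k-h_k-z_k$ rather than $\phi_k$ itself, deriving the contradiction from $\liminf\int_B f'_{\eps_k}(w_{\delta_k})u_k^2>0$ (energy identity) versus $=0$ (weak limit vanishes); your direct rescaling of $\phi_k$ amounts to the same thing. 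Two small slips: in your decomposition of $\phi_k$ the last term should be $\Pi_{\eps_k}\bigl(\phi_k-K(f'_{\eps_k}(w_{\delta_k})\phi_k)\bigr)$ rather than $\Pi_{\eps_k}\phi_k$ (which is zero), and in the Taylor bound the exponent on $|w_\delta|$ should be $p-2+\eps$, not $p-1+\eps$.
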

The  proof of this result has the following structure: first, in Subsection \ref{sec:3.1}, we check that $L_{d,\eps}$ is an invertible operator with continuous inverse in $\Theta_{d,\eps}^\perp$. Therefore, \eqref{eq:w_projN2} can be written as
\[
\phi=L^{-1}_{d,\eps}(N_{d,\eps}(\phi) + R_{d,\eps})
\]
and
\[
\|\phi\|_{H_\eps}\leq C(\|N_{d,\eps}(\phi)\|_{H_\eps} + \|R_{d,\eps}\|_{H_\eps}).
\]
In Subsection \ref{sec:3.2} we prove the estimate $\|R_{d,\eps}\|_{H_\eps}=O(\eps^{1-\gamma})$ for every $\gamma\in (0,1)$, and in Subsection \ref{sec:3.3} we conclude the proof of Proposition \ref{prop:C^1phi}, showing that the operator $N_{d,\eps}$ is of higher order with respect to $\phi$, which allows the use of a fixed point argument and the implicit function theorem. The main difficulties in these steps arise from the fact that we are dealing with a superlinear problem, which require delicate estimates in $L^{s_\eps}$-norms; moreover, the symmetry assumptions on both the domain and the functions play a crucial role in the proof of the invertibility of the linear operator $L_{d,\eps}$; see, for instance Remark \ref{domains} below.

\subsection{Estimates for the linear part \texorpdfstring{$\mathbf{L_{d,\eps}}$}{}}\label{sec:3.1}

\begin{prop}\label{lemma:linear_part}
For every $\eta\in (0,1)$ small enough there exists $\eps_0>0$ small, and $C>0$, such that if $\eps \in (0,\eps_0)$ then
\begin{equation}\label{17ott1}
\|{L}_{{d},\eps}({\phi})\|_{H_\eps} \geq C \|{\phi}\|_{H_\eps}  \qquad \forall {\phi} \in \Theta_{d,\eps}^\perp,\ {d}\in (\eta,1/\eta).
\end{equation}
Moreover, ${L}_{{d},\eps}$ is invertible in $\Theta_{d,\eps}^\perp$, with continuous inverse. 
\end{prop}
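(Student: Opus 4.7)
The plan is to argue by contradiction and blow-up, following the standard scheme for Lyapunov--Schmidt reductions in this type of problem. Suppose \eqref{17ott1} fails. Then there exist sequences $\eps_n\downarrow 0$, $d_n\in(\eta,1/\eta)$ and $\phi_n\in \Theta_{d_n,\eps_n}^\perp$ with $\|\phi_n\|_{H_{\eps_n}}=1$ and $\|L_{d_n,\eps_n}\phi_n\|_{H_{\eps_n}}\to 0$. Unfolding the projection gives
\begin{equation*}
\phi_n - K\bigl(f'_{\eps_n}(w_{\delta_n})\phi_n\bigr) = h_n + c_n\,\delta_n\partial_\delta w_{\delta_n},\qquad \|h_n\|_{H_{\eps_n}}\to 0,
\end{equation*}
for some sequence $c_n\in\R$. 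Testing this in the $H^1$ inner product against $\delta_n\partial_\delta w_{\delta_n}$, and exploiting the orthogonality $\langle\phi_n,\delta_n\partial_\delta w_{\delta_n}\rangle = 0$ together with the lower bound $\|\delta_n\partial_\delta w_{\delta_n}\|\geq c>0$ from Lemma \ref{delta:lem}, I would first conclude that $c_n$ is bounded.

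Next I would blow-up around each bubble centre $\pm e_n$ by setting $\tilde\phi_n^{\pm}(y):=\delta_n^{(n-2)/2}\phi_n(\delta_n y\pm e_n)$, defined on the expanding sets $\delta_n^{-1}(\Omega\mp e_n)$ that exhaust the half-spaces $\R^n_{\mp}$. Conformal invariance of the Dirichlet integral gives $\|\tilde\phi_n^{\pm}\|_{\mathcal{D}^{1,2}}\le \|\phi_n\|\le 1$, so along a subsequence $\tilde\phi_n^{\pm}\rightharpoonup \phi_\infty^{\pm}$, with $L^2_{loc}$ strong convergence. Combining Corollary \ref{cor:resc_asympt} (which controls the rescaled $w_{\delta_n}$) with $h_n\to 0$ and the boundedness of $c_n$, the rescaled equation passes to the limit to yield $-\Delta\phi_\infty^{\pm}=pU_{1,0}^{p-1}\phi_\infty^{\pm}$ in the half-space with a homogeneous Neumann condition on the flat boundary (inherited as $\partial\Omega$ flattens in the rescaling). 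Even reflection across the flat boundary places $\phi_\infty^{\pm}$ in the $(n+1)$-dimensional kernel of $-\Delta - pU_{1,0}^{p-1}$ on $\mathcal D^{1,2}(\R^n)$; evenness of $\phi_n$ in $x_1,\ldots,x_{n-1}$ kills the first $n-1$ translational modes, and the even reflection kills $\partial_{\xi_n}U_{1,0}$, so $\phi_\infty^{\pm}=\alpha^{\pm}V$ for some scalar $\alpha^{\pm}$, with $V$ as in \eqref{Vdef}. The odd symmetry of $\phi_n$ in $x_n$ links the two limits by $\phi_\infty^{-}(y)=-\phi_\infty^{+}(y_1,\ldots,y_{n-1},-y_n)$, forcing $\alpha^{-}=-\alpha^{+}$. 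Rescaling the orthogonality condition $\langle\phi_n,\delta_n\partial_\delta w_{\delta_n}\rangle = 0$, and using that $\delta_n\partial_\delta w_{\delta_n}$ rescales to $V$ near $+e_n$ and (with the matching parity) to $-V$ near $-e_n$, I obtain $2\alpha^{+}\|V\|_{\mathcal D^{1,2}}^2 = 0$, so $\phi_\infty^{\pm}\equiv 0$.

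With both profiles vanishing, it remains to upgrade this to $\|\phi_n\|_{H_{\eps_n}}\to 0$, which will contradict the normalisation. For the $H^1$-piece I would apply Lemma \ref{lemma:Kcontinuous}(a) to the equation above, reducing the task to showing $|f'_{\eps_n}(w_{\delta_n})\phi_n|_{2n/(n+2)}\to 0$; the integral splits into a small neighbourhood of $\{\pm e_n\}$, where rescaling together with $\tilde\phi_n^{\pm}\to 0$ strongly in $L^{q}_{loc}$ for $q<2^*$ and H\"older's inequality against $U_{1,0}^{p-1}$ close the estimate, and its complement, where $w_{\delta_n}$ is uniformly small. For the $L^{s_\eps}$-piece I would invoke Lemma \ref{lemma:Kcontinuous}(b) with $s=s_{\eps_n}$ and dual exponent $r_{\eps_n}=ns_{\eps_n}/(n+2s_{\eps_n})\to 2n/(n+2)$, bounding $|f'_{\eps_n}(w_{\delta_n})\phi_n|_{r_{\eps_n}}$ via a H\"older decomposition that interpolates $|\phi_n|_{s_{\eps_n}}$ and $\|\phi_n\|$ over the same near/far splitting. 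Lemma \ref{lemma:projection_continuous} then transfers the bound through $\Pi_{\eps_n}^\perp$, producing the desired contradiction and proving \eqref{17ott1}. Finally, writing $L_{d,\eps}=\mathrm{Id}-\Pi_\eps^\perp\circ K\circ(f'_\eps(w_\delta)\,\cdot)$ with the right-hand operator compact on $\Theta_{d,\eps}^\perp$ (by Rellich--Kondrachov and the smoothing provided by $K$), the Fredholm alternative lifts the coercivity estimate \eqref{17ott1} to invertibility with continuous inverse.

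The step I expect to be the main obstacle is the $L^{s_\eps}$ estimate. Since $s_\eps$ exceeds the critical exponent $2^*$, the $L^{s_\eps}$ norm cannot be dominated by $\|\phi\|$ through Sobolev embedding, and each H\"older exponent in the near/far splitting must be chosen with care so that its implicit constant, together with the power of $\eps$ appearing in the bubble $L^{s_\eps}$-norms (as in the proof of Lemma \ref{lemma:projection_continuous}), remains uniformly bounded as $\eps\to 0$. A secondary but non-trivial point is to justify that the rescaled Neumann boundary condition really passes to the flat half-space limit with enough quantitative control to legitimise the limit both in the linear equation and in the rescaled orthogonality integral, and that the boundary remainder $\zeta_\delta$ from Lemma \ref{exp:lem} does not contribute in the limit in the relevant norms.
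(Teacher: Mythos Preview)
Your overall strategy is the same as the paper's, but there is a genuine gap at the point where you pass to the limit in the rescaled equation. From
\[
\phi_n - K\bigl(f'_{\eps_n}(w_{\delta_n})\phi_n\bigr) = h_n + c_n\,\delta_n\partial_\delta w_{\delta_n}
\]
you only extract that $c_n$ is bounded. After blow-up around $e_n$, the term $c_n\,\delta_n\partial_\delta w_{\delta_n}$ rescales to $c_\infty V$ (not to zero), and the weak limit equation in the half-space becomes
\[
-\Delta\phi_\infty^{+} \;=\; pU_{1,0}^{p-1}\bigl(\phi_\infty^{+}+c_\infty V\bigr),
\]
\emph{not} the linearised equation for $\phi_\infty^{+}$. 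Hence the conclusion $\phi_\infty^{+}=\alpha^{+}V$ is unjustified, and the orthogonality argument that follows collapses. The same defect reappears in your final contradiction: to deduce $\|\phi_n\|\to 0$ from $|f'_{\eps_n}(w_{\delta_n})\phi_n|_{2n/(n+2)}\to 0$ you still have to absorb the contribution $|c_n|\,\|\delta_n\partial_\delta w_{\delta_n}\|$, which does not vanish if $c_n$ is merely bounded.

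The paper closes this gap in its Step~1 by proving directly that $c_k\to 0$. The key is to test the equation against $PZ_{\delta_k}$ and compare $\int_B f'_{\eps_k}(w_{\delta_k})\phi_k\,PZ_{\delta_k}$ with the orthogonality relation $\langle\phi_k,PZ_{\delta_k}\rangle=\int_B(f_0'(U_{\delta_k,e_n})\delta_k\partial_\delta U_{\delta_k,e_n}-f_0'(U_{\delta_k,-e_n})\delta_k\partial_\delta U_{\delta_k,-e_n})\phi_k=0$; the discrepancy splits into three pieces (replacing $PZ$ by $Z$, $f'_\eps$ by $f'_0$, and $f'_0(w_\delta)$ by $f'_0(U_{\delta,\pm e_n})$), each of which is $o(1)$ by Corollary~\ref{cor:estimates_with_bubbles} and Lemma~\ref{lemmaA}. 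Once $c_k\to 0$ is in hand, the paper works with $u_k:=\phi_k-h_k-z_k=K(f'_{\eps_k}(w_{\delta_k})\phi_k)$ rather than $\phi_k$ itself, which is also how it sidesteps the $L^{s_\eps}$ difficulty you flag: instead of proving $\|\phi_n\|_{H_{\eps_n}}\to 0$, it shows (Step~2) that $\liminf\int f'_{\eps_k}(w_{\delta_k})u_k^2>0$ via a bootstrap $|u_k|_{s_{\eps_k}}\le C\|u_k\|+o(1)$, and then (Step~4) that the same quantity tends to $0$ by the blow-up, obtaining the contradiction without ever needing $|\phi_n|_{s_{\eps_n}}\to 0$.
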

\begin{proof} We adapt the proof of \cite[Lemma 3.1]{MP10} to our setting. We argue by contradiction, assuming there exists $\eta\in (0,1)$, $d_k\in (\eta,1/\eta)$ with $d_k\to d_*$, $\eps_k\to 0$ and $\phi_k\in \Theta_{d_k,\eps_k}^\perp$ such that
\begin{align}\label{phik}
\|\phi_k\|_{H_{\eps_k}}=1,\quad \|h_k\|_{H_{\eps_k}}\to 0,
\end{align}
where 
\begin{align}\label{hk}
h_k:={L}_{{d_k},\eps_k}({\phi_k})\in \Theta_{d_k,\eps_k}^\perp    
\end{align}
 Denote also $\delta_k:=d_k\eps_k\to 0$ and $Z_{\delta}:=\delta \partial_\delta W_\delta$.  By \eqref{hk},
\begin{equation}\label{eq:linear_aux1}
\phi_k-K(f'_{\eps_k}(w_{\delta_k})\phi_k)= h_k + z_k
\end{equation}
with $z_k\in \Theta_{d_k,\eps_k}$, which means there exists $c_k\in \R$ such that 
\begin{align}\label{Ec3}
z_k=c_k PZ_{\delta_k}.    
\end{align}

\noindent Step 1. Check that $\|z_k\|_{H_{\eps_k}}\to 0$.

We test \eqref{eq:linear_aux1} with $z_k$. Since $\langle \phi_k,z_k\rangle=\langle h_k,z_k\rangle=0$,
\[
\|z_k\|^2=-\int_{B} f'_{\eps_k}(w_{\delta_k})\phi_k z_k.
\]
Using that $z_k=c_kPZ_{\delta_k}$,
\begin{align}\label{Ec1}
c_k^2 \|PZ_{\delta_k}\|^2 =  - c_k\int_{B} f'_{\eps_k}(w_{\delta_k})\phi_k P Z_{\delta_k}.
\end{align}
Now, by Lemma \ref{delta:lem},
\begin{align}\label{eq:aux_stat}
\|PZ_{\delta_k}\|^2= \kappa+o(1),
\end{align}
for some $\kappa>0$, while 
\[
0=\langle z_k,\phi_k\rangle=c_k \int_{B} \delta_k (f_0'(U_{\delta_k,e_n})\partial_\delta U_{\delta_k,e_n}-f_0'(U_{\delta_k,-e_n})\partial_\delta U_{\delta_k,-e_n})\phi_k,
\]
where $\partial_\delta U_{\delta_k,\pm e_n}:= \partial_\delta U_{\delta,\pm e_n}|_{\delta=\delta_k}$. Then,
\begin{align}
\int_{B} f'_{\eps_k}(w_{\delta_k})\phi_k P Z_{\delta_k} =&\int_{B} f'_{\eps_k}(w_{\delta_k})\phi_k (P Z_{\delta_k}-Z_{\delta_k})+\int_{B} (f'_{\eps_k}(w_{\delta_k})-f_0'(w_{\delta_k}))\phi_k Z_{\delta_k}\notag\\
				&+\int_{B} (f'_{0}(w_{\delta_k})Z_{\delta_k} -f'_0(U_{\delta_k,e_n})\delta_k \partial_\delta U_{\delta_k,e_n}+f'_0(U_{\delta_k,-e_n})\delta_k \partial_\delta U_{\delta_k,-e_n})\phi_k.\label{Ec2}
\end{align}

Note that, by Corollary \ref{cor:estimates_with_bubbles}, Lemmas \ref{lemma:LpnormBubble} and \ref{lemmaA},  \eqref{phik} and by \eqref{eq:aux_stat},
\begin{align*}
T_1&:=\left|\int_{B} f'_{\eps_k}(w_{\delta_k})\phi_k (P Z_{\delta_k}-Z_{\delta_k})\right|
\leq |f'_{\eps_k}(w_{\delta_k})|_{\frac{n}{2}}|\phi_k|_{2^*} |P Z_{\delta_k}-Z_{\delta_k}|_{2^*}=o(1).
\end{align*}
Similarly, by Lemmas \ref{lemma:LpnormBubble} and \ref{lemmaA}, \eqref{phik}, and \eqref{pa U con U},
\begin{align*}
T_2&:=\left|\int_{B} (f'_{\eps_k}(w_{\delta_k})-f_0'(w_{\delta_k}))\phi_k Z_{\delta_k}\right|
\leq |f'_{\eps_k}(w_{\delta_k})-f_0'(w_{\delta_k})|_{\frac{n}{2}}|\phi_k|_{2^*} |Z_{\delta_k}|_{2^*}=o(1).
\end{align*}
Finally, recalling that $w_{\delta_k}=PW_{\delta_k}=P(U_{\delta_k,e_n}-U_{\delta_k,-e_n})$ and that $Z_{\delta_k}=\delta_k \partial_\delta U_{\delta_k,e_n}-\delta_k \partial_\delta U_{\delta_k,-e_n}$,
\begin{align*}
    T_3&:=\left|\int_{B} (f'_{0}(w_{\delta_k})Z_{\delta_k} -f'_0(U_{\delta_k,e_n})\delta_k \partial_\delta U_{\delta_k,e_n}+f'_0(U_{\delta_k,-e_n})\delta_k \partial_\delta U_{\delta_k,-e_n})\phi_k\right|\\
    &\leq\int_{{B}} |f'_{0}(w_{\delta_k}) -f'_0(U_{\delta_k,e_n})||\delta_k \partial_\delta U_{\delta_k,e_n}||\phi_k|+\int_{{B}}|f'_{0}(w_{\delta_k})-f'_0(U_{\delta_k,-e_n})||\delta_k \partial_\delta U_{\delta_k,-e_n}||\phi_k|\\
    &\leq 2|U_{\delta_k,e_n}|_{2^*}|\phi_k|_{2^*}|f'_{0}(w_{\delta_k}) -f'_0(U_{\delta_k,e_n})|_{\frac{n}{2}}=o(1),
\end{align*}
where we used \eqref{pa U con U}, \eqref{phik} and Lemma \ref{lemmaA}.

Then, by \eqref{Ec1}, \eqref{Ec2}, and the fact that $T_1+T_2+T_3=o(1)$, we have that $c_k\to 0$. Therefore, by \eqref{Ec3} and \eqref{eq:aux_stat}, we conclude that $\|z_k\|_{H_{\eps_k}}\to 0$.

\medbreak

\noindent Step 2. Prove that
\begin{equation}\label{eq:Step2}
\liminf_{k\to\infty} \int_{B} f_{\eps_k}'(w_{\delta_k}) u_k^2>0,
\end{equation}
where $u_k:=\phi_k-h_k-z_k$, which satisfies
\begin{equation}\label{eq:u_k1}
u_k=K(f'_{\eps_k}(w_{\delta_k})\phi_k)=K(f'_{\eps_k}(w_{\delta_k})u_k + f'_{\eps_k}(w_{\delta_k})(h_k+z_k))
\end{equation}
or, equivalently,
\begin{equation}\label{eq:uk2}
\begin{cases}
-\Delta u_k=f'_{\eps_k}(w_{\delta_k})\phi_k =f'_{\eps_k}(w_{\delta_k})u_k + f'_{\eps_k}(w_{\delta_k})(h_k+z_k)\text{ in } {B},\\
\int_{B} u_k=0,\quad \partial_\nu u_k=0 \text{ on } \partial {B},
\end{cases}
\end{equation}
and $\|u_k\|_{H_{\eps_k}}\to 1$ (by Step 1). We claim that 
\begin{equation}\label{eq:lowerboundu_k}
    \liminf_{k\to\infty} \|u_k\|>0.
\end{equation} 
Indeed, by using \eqref{phik}, \eqref{eq:u_k1}, Lemmas \ref{lemma:Kcontinuous} and \ref{lemmaA}, and Step 1,
\begin{align*}
    |u_k|_{s_{\eps_k}}&\leq C \left( |f'_{\eps_k}(w_{\delta_k})u_k|_{\frac{ns_{\eps_k}}{n+2s_{\eps_k}}} + |f'_{\eps_k}(w_{\delta_k})(h_k+z_k)|_{\frac{ns_{\eps_k}}{n+2s_{\eps_k}}}  \right)\\
                    & \leq C |f'_{\eps_k}(w_{\delta_k})|_{\frac{(p+1)ns_{\eps_k}}{(p+1)(n+2s_{\eps_k})-ns_{\eps_k}}}|u_k|_{p+1}+ C |f'_{\eps_k}(w_{\delta_k})|_{\frac{n}{2}}|h_k+z_k|_{s_{\eps_k}}\leq C\|u_k\|+o(1),
\end{align*}
where we have used that $\frac{(p+1)ns_{\eps_k}}{(p+1)(n+2s_{\eps_k})-ns_{\eps_k}}=\frac{n}{2}+O(\eps_k)$ and $|f'_{\eps_k}(w_{\delta_k})|_{\frac{(p+1)ns_{\eps_k}}{(p+1)(n+2s_{\eps_k})-ns_{\eps_k}}}=O(1)$ as $k\to\infty$, by Corollary \ref{cor:estimates_with_bubbles} and Lemma \ref{lemma:LpnormBubble}.  Therefore, if $\|u_k\|\to 0$, then also $|u_k|_{s_{\eps_k}}\to 0$ and $\|u_k\|_{H_{\eps_k}}\to 0$, a contradiction. Hence, the claim \eqref{eq:lowerboundu_k} is true.

Next, testing \eqref{eq:uk2} with $u_k$, we obtain
\begin{equation}\label{eq:identityu_k}
\|u_k\|=\int_{B} f'_{\eps_k}(w_{\delta_k})u_k^2 + \int_{B} f'_{\eps_k}(w_{\delta_k})(h_k+z_k)u_k.    
\end{equation}
Since
\begin{align*}
\left|\int_{B} f'_{\eps_k}(w_{\delta_k})(h_k+z_k)u_k\right|\leq |f'_{\eps_k}(w_{\delta_k})|_{\frac{n}{2}}|h_k+z_k|_{2^*} |u_k|_{2^*} \leq C\|h_k+z_k\|_{H_{\eps_k}} |u_k|_{H_{\eps_k}}\to 0,
\end{align*}
combining this with \eqref{eq:lowerboundu_k} and \eqref{eq:identityu_k} yields directly \eqref{eq:Step2}, which is the goal of Step 2.

\medbreak

\noindent Step 3.  
Since ${B}$ is a smooth domain, there is an extension operator $E:H^1({B})\to H^1(\R^N)$ such that $Eu_k=u_k$ in ${B}$ and
\begin{align*}
    \|\nabla Eu_k\|_{L^2(\R^n)}\leq \|Eu_k\|_{H^1(\R^n)}\leq C\|u_k\|_{H^1({B})},
\end{align*}
for some constant $C=C({B})>0$.  Since $u_k$ has zero average in ${B}$, the Poincaré-Wirtinger inequality implies that 
\begin{align*}
    \|\nabla Eu_k\|_{L^2(\R^N)}\leq C'\|\nabla u_k\|_{L^2({B})},
\end{align*}
for some constant $C'=C'({B})>0$.

We identify $u_k$ with its extension. Define
\begin{align}\label{eq:blouwup_en}
\hat u_k(y)=\delta_k^\frac{n-2}{2}u_k(\delta_k y+e_n),\quad \text{ and } \quad {\Omega_k}:=\frac{{B}-e_n}{\delta_k}=B_{\frac{1}{\delta_k}}(-\frac{e_n}{\delta_k}),    
\end{align}
which is such that
\[
\int_{\R^n} |\nabla \hat u_k|^2 \leq C'\int_{B} |\nabla u_k|^2 = C' \int_{{\Omega_k}}|\nabla \hat u_k|^2.
\]
Therefore, $(\hat u_k)$ is a bounded sequence in $\mathcal{D}^{1,2}(\R^n)$ and, passing to a subsequence, there is $u_0\in D^{1,2}(\R^n)$ such that
\begin{align}\label{wc}
    \hat u_k\to  u_0\quad  \text{ weakly in } \mathcal{D}^{1,2}(\R^n), \text{ strongly in } L^q_\text{loc}(\R^n) \text{ for all } q\in [2,2^*).
\end{align}
We want to prove that $u_0=0$.

We have, for $x=\delta_k y+e_n\in {B}$ and $y\in {\Omega_k}$, that
\begin{align*}
\begin{cases}
-\Delta \hat u_k(y)=\delta_k^2 f'_{\eps_k}(w_{\delta_k}(x)) \hat u_k(y) + \delta_k^2f'_{\eps_k}(w_{\delta_k}(x))(\hat h_k(y)+\hat z_k(y)),\\
\int_{{\Omega_k}} \hat u_k=0,
\end{cases}
\end{align*}
where $\hat h_k(y)=\delta_k^\frac{n-2}{2}h_k(\delta_k y+e_n)$, $\hat z_k(y)=\delta_k^\frac{n-2}{2}h_k(\delta_k y+e_n)$.
Moreover, for $y\in \partial {\Omega_k}$,
\[
\partial_\nu \hat u_k(y)=\delta_k^\frac{n}{2}\nabla u_k (\delta_k y+e_n)\cdot \nu(y)=\delta_k^\frac{n}{2}\nabla u_k (\delta_k y+e_n)\cdot \frac{\delta_k y+e_n}{|\delta_k y+e_n|}= \delta_k^2 \nabla u_k (x)\cdot \frac{x}{|x|}=0.
\]

Let $\varphi \in C^\infty_c (\overline{\R^n_+})$. Then, by dominated convergence, Corollary \ref{cor:estimates_with_bubbles}, \eqref{wc}, and by $\| h_k\|,\| z_k\|\to 0$,
\begin{align*}
\int_{\R^n_+}\nabla u_0 \nabla \varphi =&\lim_{k\to\infty}\int_{{\Omega_k}}\nabla \hat u_k \nabla \varphi = 
\lim_{k\to\infty}\int_{{\Omega_k}}\delta_k^2f'_{\eps_k}(w_{\delta_k}(\delta_ky+e_n))\hat u_k(y) \varphi(y)\, dy\\
                &+\lim_{k\to \infty}\int_{{\Omega_k}} \delta_k^2 f'_{\eps_k}(w_{\delta_k}(\delta_ky+e_n))(\hat h_k(y)+\hat z_k(y))\varphi(y)\, dy = \int_{\R^n_+}f'_{0}(U_{1,0}) u_0 \varphi.
\end{align*}

Then $u_0$ would be a solution of
\begin{align*}
-\Delta u_0 = f_0'(U_{1,0})u_0\quad \text{ in }\R^n_+,\qquad \partial_\nu u_0=0\quad \text{ on }\partial \R^n_+.
\end{align*}
Identifying $u_0$ with its even reflection with respect to $\partial \R^n_+$, we obtain that $u_0$ is a solution of  
\begin{align}\label{eq:linearized_limit_problem}
-\Delta u_0 = f_0'(U_{1,0})u_0\quad \text{ in }\R^n,\qquad u_0\in D^{1,2}(\R^n).
\end{align}
Since $u_k\in H_{\eps_k}$, then $u_k$ is even with respect to $x_1,\ldots, x_{n-1}$. This is preserved under the change of variables $y\mapsto \delta_k y+e_n$ and under the even reflection with respect to $\partial \R_+^n$, so we have that 
\begin{align}\label{a:obs}
\text{$\hat u_k$ is also even in the coordinates $x_1,\ldots, x_{n-1}$.}
\end{align}
Therefore, $u_0$ is also even with respect to $x_1,\ldots, x_{n-1}$.  By \eqref{Vdef}, we have that $u_0=c V$ with $V$ as in \eqref{Vdef}.  We claim that $c=0$.  Indeed, since $u_k$ is odd with respect to $x_n$ and ${B}=B_1(0)$, we have that
\begin{align*}
 \int_{{B}} u_k f_0'(U_{\delta_k,e_n})\delta_k \partial_\delta U_{\delta_k,e_n}
 =-\int_{{B}} u_k f_0'(U_{\delta_k,-e_n})\delta_k \partial_\delta U_{\delta_k,-e_n},
\end{align*}
and therefore,
\begin{align*}
c\int_{\R^n}|\nabla V|^2
&=2\int_{\R^n_+}\nabla V\nabla u_0
=2\int_{{\Omega_k}}\nabla V\nabla \hat u_k+o(1)\\
&=2\int_{{\Omega_k}}\hat u_k f'(U_{1,0})V+o(1)
=2\int_{{B}} u_k f_0'(U_{\delta_k,e_n})\delta_k \partial_\delta U_{\delta_k,e_n}+o(1)\\
&=\int_{B} u_k [f'_0(U_{\delta_k,e_n})\delta_k \partial_\delta U_{\delta_k,e_n}-f'_0(U_{\delta_k,-e_n})\delta_k \partial_\delta U_{\delta_k,-e_n}]+o(1)\\
&=\langle u_k , PZ_{\delta_k}\rangle+o(1)
=\langle z_k , PZ_{\delta_k}\rangle+o(1)=o(1),
\end{align*}
where we used that $u_k=\phi_k-h_k-z_k$ in ${B}$ and that $\|z_k\|_{H_{\eps_k}}\to 0$, by Step 1.  Therefore, $c=0$ and $u_0=0$.

\medbreak

\noindent Step 4.  Finally, we check that
\begin{equation}\label{eq:Step4}
\liminf_{k\to\infty} \int_{B} f_{\eps_k}'(w_{\delta_k}) u_k^2=0,
\end{equation}
which contradicts \eqref{eq:Step2} and concludes the proof of the proposition. Observe that, using Corollary \ref{cor:estimates_with_bubbles},
\[
\left|\int_{B} f_{\eps_k}'(w_{\delta_k}) u_k^2 \right| \leq C\int_{B} |PW_{\delta_k}|^{\frac{4}{n-2}+\eps_k} |u_k|^2\leq C \int_{B} (U_{\delta_k,e_n}^{\frac{4+\eps_k(n-2)}{n-2}} +U_{\delta_k,-e_n}^{\frac{4+\eps_k(n-2)}{n-2}})|u_k|^2
\]

We split ${B}=\omega^+\cup \omega^-\cup ({B} \setminus (\omega^+\cup \omega^-))$. On ${B} \setminus (\omega^+\cup \omega^-)$, we have
\begin{align*}
\int_{{B}\setminus\{\omega^+\cup \omega_-\}} (U_{\delta_k,e_n}^{\frac{4+\eps_k(n-2)}{n-2}} +U_{\delta_k,-e_n}^{\frac{4+\eps(n-2)}{n-2}})u_k^2 \leq C\delta_k^{\frac{4+\eps_k(n-2)}{n-2}}\int_{B} u_k^2=O(\delta_k^{\frac{4+\eps_k(n-2)}{2}})=o(1).
\end{align*}
On $\omega^+$, considering the extension of $u_k$ to the whole $\R^n$ and considering the blowup sequence $\hat u_k$ defined in \eqref{eq:blouwup_en}, we have
\begin{align*}
    \int_{\omega^+} (U_{\delta_k,e_n}^{\frac{4+\eps_k(n-2)}{n-2}} +U_{\delta_k,-e_n}^{\frac{4+\eps(n-2)}{n-2}})u_k^2 &= \int_{\omega^+} U_{\delta_k,e_n}^{\frac{4+\eps_k(n-2)}{n-2}}u_k^2 + o(1)\leq C \int_{{\Omega_k}} U_{1,0}^\frac{4+\eps_k(n-2)}{n-2}\hat u_{k}^2+o(1)\\
            & \leq C\int_{\R^n} \frac{1}{(1+|y|^2)^2} \hat u_{k}^2 +o(1)=o(1),
\end{align*}
since $\hat u_{k}^2\rightharpoonup 0$ in $L^{\frac{n}{n-2}}(\R^n)$ and $\frac{1}{(1+|y|^2)^2}\in L^\frac{n}{2}(\R^n)$.

Considering now the blowup sequence at $- e_n$ given by $y\mapsto \delta_k^{\frac{n-2}{2}} u_k (\delta_k y- e_k)$, we prove in an analogous way that
\[
 \int_{\omega^-} (U_{\delta_k,e_n}^{\frac{4+\eps_k(n-2)}{n-2}} +U_{\delta_k,-e_n}^{\frac{4+\eps(n-2)}{n-2}})u_k^2 =o(1). \qedhere
\]
\end{proof}

\begin{rmk}\label{domains}
In the previous proof, a key ingredient is that \eqref{a:obs} holds, namely, that if $\varphi\in H_\eps$ then $\hat \varphi(y):= \delta^{\frac{n-2}{2}}\varphi(\delta_k y+e_n)$ is even in the coordinates $x_1,\ldots,x_{n-1}$.  This property imposes some restrictions when considering the more general case of \eqref{form} where $2k$ bubbles are considered.  In particular, it explains why the blow-up points $x_i$ need to be positioned at regular angles and why the signs must be alternating. 
\end{rmk}

\subsection{Estimates for the zero order term
\texorpdfstring{$\mathbf{R_{d,\eps}}$}{}}\label{sec:3.2}

In this subsection we prove the following asymptotic bound for $R_{d,\eps}$.
\begin{prop}\label{prop:estimate_remainderR}
Let $\gamma\in (0,1)$, $0<\eta<1$ and $\delta=d\eps$. Then
\begin{align*}
\|R_{d,\eps}\|_{H_\eps}=O(\eps^{1-\gamma})
\end{align*}
as $\eps\to 0$, uniformly in $d\in (\eta,1/\eta)$.
\end{prop}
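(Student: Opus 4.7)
The plan is to reduce the $H_\eps$-norm bound on $R_{d,\eps}$ to two $L^q$-norm bounds on an explicit source function. Since $w_\delta = K(U_{\delta,e_n}^p - U_{\delta,-e_n}^p)$ and the function
\[
g_\eps := f_\eps(w_\delta) - (U_{\delta,e_n}^p - U_{\delta,-e_n}^p)
\]
is odd in $x_n$ (so it has zero average over ${B}$), linearity of $K$ on zero-average data gives $K(f_\eps(w_\delta)) - w_\delta = K(g_\eps)$. Lemma \ref{lemma:projection_continuous} then reduces the matter to controlling $\|Kg_\eps\|+|Kg_\eps|_{s_\eps}$, and by Lemma \ref{lemma:Kcontinuous} it suffices to estimate $|g_\eps|_{2n/(n+2)}$ and $|g_\eps|_{q_\eps}$ where $q_\eps := ns_\eps/(n+2s_\eps) = 2n/(n+2) + O(\eps)$. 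These two exponents differ only by $O(\eps)$ and can be treated in parallel.

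Next I would split
\[
g_\eps = \underbrace{f_\eps(w_\delta) - f_0(w_\delta)}_{A_\eps} + \underbrace{f_0(w_\delta) - f_0(W_\delta)}_{B_\eps} + \underbrace{f_0(W_\delta) - (U_{\delta,e_n}^p - U_{\delta,-e_n}^p)}_{C_\eps},
\]
with $W_\delta = U_{\delta,e_n} - U_{\delta,-e_n}$, and estimate each summand in turn. For $B_\eps$, the mean value theorem plus Corollary \ref{cor:estimates_with_bubbles} give $|B_\eps| \leq C(U_{\delta,e_n}+U_{\delta,-e_n})^{p-1}|PW_\delta - W_\delta|$; inserting the sharp pointwise bound \eqref{eq:estimate_PW-W} and rescaling $y = (x\mp e_n)/\delta$ delivers $|B_\eps|_q = O(\delta) = O(\eps)$. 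For $C_\eps$, I would split ${B} = \omega^+\cup\omega^- \cup ({B}\setminus(\omega^+\cup\omega^-))$: in $\omega^\pm$ one bubble dominates and $f_0(W_\delta) \mp U_{\delta,\pm e_n}^p$ is pointwise bounded by $C\,U_{\delta,\pm e_n}^{p-1} U_{\delta,\mp e_n}$, which is small because the subordinate bubble is at most $C\delta^{(n-2)/2}$ there; in the remaining region both bubbles are $O(\delta^{(n-2)/2})$. Lemma \ref{lemma:LpnormBubble} then yields $|C_\eps|_q = O(\delta^\alpha)$ for some $\alpha \geq 1$.

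The decisive contribution comes from $A_\eps$. For any small $\gamma>0$ I would use the elementary inequality
\[
\bigl||t|^\eps-1\bigr| \leq C_\gamma\,\eps\bigl(|t|^\gamma+|t|^{-\gamma}\bigr)\max(1,|t|^\eps),
\]
which, combined with $|w_\delta|\leq C(U_{\delta,e_n}+U_{\delta,-e_n})$ and $\|w_\delta\|_\infty \lesssim \delta^{-(n-2)/2}$, gives $|A_\eps| \leq C\eps\bigl(|w_\delta|^{p+\eps+\gamma}+|w_\delta|^{p-\gamma}+|w_\delta|^p\bigr)$. The $L^q$-norm of $|w_\delta|^p$ at $q = 2n/(n+2)$ is bounded (this is precisely the critical Sobolev exponent), and the subcritical piece $|w_\delta|^{p-\gamma}$ is trivially controlled. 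The delicate piece is $|w_\delta|^{p+\eps+\gamma}$: via Lemma \ref{lemma:LpnormBubble}, its $L^q$-norm is of size $O(\delta^{-c\gamma})$ once the factor $\delta^{-c\eps} = (d\eps)^{-c\eps} = 1+o(1)$ is absorbed. Substituting $\delta = d\eps$ yields $|A_\eps|_q = O(\eps^{1-c\gamma})$; since $\gamma$ may be chosen arbitrarily small, this gives the claimed bound. The same reasoning applied with $q_\eps$ instead of $2n/(n+2)$ changes only $O(\eps)$ exponents and does not affect the conclusion.

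The \textbf{main obstacle} is the $A_\eps$ estimate. Because $|w_\delta|^p$ lies exactly on the borderline of $L^{2n/(n+2)}$-integrability (a reflection of the conformal invariance of the bubble), any supercritical shift of the exponent by $\eps+\gamma$ unavoidably produces a divergent factor $\delta^{-c\gamma}$; the whole argument relies on this divergence being absorbed by the explicit $\eps$ extracted from $\bigl||t|^\eps - 1\bigr|$. Uniformity in $d\in(\eta,1/\eta)$ is then automatic, since the constants depend on $d$ only through $(d\eps)^{\pm c\eps} = 1+o(1)$.
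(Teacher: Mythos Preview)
Your proposal is correct and follows essentially the same approach as the paper: the same reduction via Lemmas~\ref{lemma:projection_continuous} and~\ref{lemma:Kcontinuous}, the same three-way split $g_\eps = A_\eps + B_\eps + C_\eps$ (corresponding to the paper's Lemmas~\ref{Lemma:I_1aux} and~\ref{lemma:|i*(f_eps(w_delta)-f_0(w_delta))|_aux}), and the same mechanism for $A_\eps$ (the paper uses the Taylor expansion of Lemma~\ref{lemma:f_eps_expansion} in place of your pointwise inequality for $\bigl||t|^\eps-1\bigr|$, but the two are equivalent after bounding $|\log|t||\leq C_\gamma(|t|^\gamma+|t|^{-\gamma})$). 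The one technical point you skim over is that the constant in Lemma~\ref{lemma:Kcontinuous}(b) depends on $s$; the paper secures uniformity in $\eps$ by fixing a slightly larger exponent $\bar s > s_\eps$ and passing from $L^{\bar s}$ to $L^{s_\eps}$ via H\"older.
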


Observe that
\begin{align*}
R_{d,\eps}&=\Pi_\eps^\perp(K(f_\eps(w_\delta))-w_\delta)
=\Pi_\eps^\perp(K(f_\eps(w_\delta)-f_0(w_\delta)+f_0(w_\delta))-w_\delta)\\
&=\Pi_\eps^\perp(K(f_\eps(w_\delta)-f_0(w_\delta)))+\Pi_\eps^\perp(K(f_0(w_\delta))-w_\delta).
\end{align*}
Since $\|\cdot\|_{H_\eps} = \|\cdot\|+|\cdot|_{s_\eps}$ and using Lemma \ref{lemma:projection_continuous},
\begin{align}
\|R_{d,\eps}\|_{H_\eps} \leq
				&C\left( \|K(f_\eps(w_\delta)-f_0(w_\delta))\| +  \|K(f_0(w_\delta))-w_\delta\| \nonumber \right. \\
				& \left.+ |K(f_\eps(w_\delta)-f_0(w_\delta))|_{s_\eps}+ |K(f_0(w_\delta))-w_\delta|_{s_\eps} \right).\label{eq:estimateR_d_eps}
\end{align}

Thus we need to estimate 
\begin{align*}
    \|K(f_0(w))-w_\delta\|,\qquad  \|K(f_\eps(w_\delta)-f_0(w_\delta))\|,
\end{align*} 
(see Lemmas \ref{lemma:I_1} and \ref{lemma:|i*(f_eps(w_delta)-f_0(w_delta))|} below) as well as 
\begin{align*}
|K(f_\eps(w_\delta)-f_0(w_\delta))|_{s_\eps},\qquad |K(f_0(w_\delta))-w_\delta|_{s_\eps}
\end{align*}
(Lemmas \ref{lemma:estimates_snorm1} and \ref{lemma:|K(f_0(w_delta))-w_delta |_s_eps} below). The proof of Proposition \ref{prop:estimate_remainderR} will follow directly from this.

\begin{lemma}\label{lemma:I_1} Let $\eta\in (0,1)$, $d>0$ and $\delta=d\eps$. We have that, for every $\gamma\in (0,1)$,
\[
\|K(f_0(w))-w_\delta\|\leq C |f_0(w_\delta) -f_0(U_{\delta,e_n})+f_0(U_{\delta,-e_n})|_{\frac{p+1}{p}}
=\begin{cases}
O(\delta|\log \delta|^\frac{1}{4}) & \text{ if } n=4,\\
O(\delta)=O(\eps) & \text{ if } n\geq 5,
\end{cases}=O(\eps^{1-\gamma})
\]
as $\eps\to 0$, uniformly in $d\in (1/\eta,\eta)$.
\end{lemma}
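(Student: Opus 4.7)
The plan is to recognise the left-hand side as $K$ applied to a single function, verify its zero-average condition, apply the Sobolev-type bound from Lemma~\ref{lemma:Kcontinuous}(a), and then estimate the resulting $L^{(p+1)/p}$-norm by decomposing the nonlinear term into a ``correction'' piece plus a ``pure bubble-interaction'' piece and carrying out the integrals in localised coordinates around each bubble.

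First I would rewrite $K(f_0(w_\delta))-w_\delta$: since $U_{\delta,\pm e_n}>0$, one has $f_0(U_{\delta,e_n})-f_0(U_{\delta,-e_n})=U_{\delta,e_n}^p-U_{\delta,-e_n}^p$, and the very definition of $w_\delta=PW_\delta$ gives $w_\delta=K(f_0(U_{\delta,e_n})-f_0(U_{\delta,-e_n}))$. Setting
\[
h:=f_0(w_\delta)-f_0(U_{\delta,e_n})+f_0(U_{\delta,-e_n}),
\]
this yields $K(f_0(w_\delta))-w_\delta=K(h)$. The function $h$ is odd in $x_n$ (since $w_\delta\in H_\eps$ is odd in $x_n$ and the reflection $x_n\mapsto -x_n$ swaps $U_{\delta,e_n}$ and $U_{\delta,-e_n}$), so $\int_{{B}}h=0$, and Lemma~\ref{lemma:Kcontinuous}(a), combined with the identity $2n/(n+2)=(p+1)/p$, yields the first inequality $\|K(h)\|\leq c\,|h|_{(p+1)/p}$.

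To bound $|h|_{(p+1)/p}$ I would write $w_\delta=W_\delta+\rho_\delta$ with $\rho_\delta:=w_\delta-W_\delta$, decompose $h=h_1+h_2$ where $h_1:=f_0(w_\delta)-f_0(W_\delta)$ captures the effect of the projection correction and $h_2:=f_0(W_\delta)-f_0(U_{\delta,e_n})+f_0(U_{\delta,-e_n})$ is pure bubble interaction, and estimate each piece pointwise. For $h_1$ I use the mean value bound $|f_0(s)-f_0(t)|\leq p\max(|s|,|t|)^{p-1}|s-t|$ combined with Corollary~\ref{cor:estimates_with_bubbles} (which gives $|w_\delta|,|W_\delta|\leq C(U_{\delta,e_n}+U_{\delta,-e_n})$) and the explicit bound \eqref{eq:estimate_PW-W} on $|\rho_\delta|$. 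For $h_2$, the elementary pointwise inequality $|f_0(a-b)-a^p+b^p|\leq C(a^{p-1}b+ab^{p-1})$ (valid for $a,b\geq 0$ and $p\geq 2$; the alternative $\leq C\min(a,b)^p$ handles $p<2$, i.e.\ $n\geq 7$), applied with $a=U_{\delta,e_n}$ and $b=U_{\delta,-e_n}$, gives $|h_2|\leq C(U_{\delta,e_n}^{p-1}U_{\delta,-e_n}+U_{\delta,e_n}U_{\delta,-e_n}^{p-1})$.

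Finally, the $L^{(p+1)/p}$-norms would be computed via the splitting ${B}=\omega^+\cup\omega^-\cup({B}\setminus(\omega^+\cup\omega^-))$ (see \eqref{omegas}) and the rescaling $y=(x\mp e_n)/\delta$ in $\omega^\pm$. In each $\omega^\pm$ the dominant bubble concentrates while the opposite one is uniformly $O(\delta^{(n-2)/2})$; the middle region contributes at strictly lower order since both bubbles are small there. The resulting one-dimensional integrals are of the type $\int_0^{1/\delta}s^{n-1}(1+s^2)^{-\alpha}\,ds$, which for $n\geq 5$ are convergent uniformly in $\delta$ and yield $|h|_{(p+1)/p}=O(\delta)$, whereas for $n=4$ a borderline exponent produces an integral of the form $\int_\delta^1 r^{-1}\,dr\sim|\log\delta|$, and taking the $(4/3)$-th root gives the factor $|\log\delta|^{1/4}$. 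Since $\delta=d\eps$ with $d\in(\eta,1/\eta)$, both $O(\delta)$ and $O(\delta|\log\delta|^{1/4})$ are $o(\eps^{1-\gamma})$ for every $\gamma\in(0,1)$. The hard part will be the bookkeeping in this last step: expanding $(U_{\delta,e_n}+U_{\delta,-e_n})^{p-1}|\rho_\delta|$ produces several cross-terms (products of the two bubbles at different powers, and of either bubble with $\rho_\delta$), and one must verify that all of them are either strictly subleading or reduce to the same borderline integral in $n=4$, keeping the estimates uniform in $d\in(\eta,1/\eta)$.
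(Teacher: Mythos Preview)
Your approach is essentially the paper's: rewrite $K(f_0(w_\delta))-w_\delta=K(h)$ with $h=f_0(w_\delta)-f_0(U_{\delta,e_n})+f_0(U_{\delta,-e_n})$, apply Lemma~\ref{lemma:Kcontinuous}(a), and split $h=h_1+h_2$ exactly as you do. The paper simply delegates the estimates of $|h_1|_{(p+1)/p}$ and $|h_2|_{(p+1)/p}$ to the appendix Lemma~\ref{Lemma:I_1aux}, whose proof uses the same pointwise inequalities (Lemma~\ref{lemma:TaylorInequalities}) and domain splitting you describe. One small point: the paper obtains the $|\log\delta|^{1/4}$ factor for $n=4$ by applying H\"older to separate $\int|W_\delta|^{(p-1)(p+1)/p}|\rho_\delta|^{(p+1)/p}$ into $(\int|W_\delta|^{p+1})^{(p-1)/p}(\int|\rho_\delta|^{p+1})^{1/p}$, and the logarithm enters through $\int|\rho_\delta|^{4}=O(\delta^4|\log\delta|)$; your direct pointwise method (no H\"older) actually gives a convergent rescaled integral and hence the sharper bound $O(\delta)$ for $n=4$ as well, so the borderline $\int r^{-1}\,dr$ you anticipate does not in fact appear---but this only improves the conclusion.
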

\begin{proof}
Let $v_\delta=K(f_0(w_\delta))$ and recall that $w_\delta=K(f_0(U_{\delta,e_n})-f_0(U_{\delta,-e_n}))$, that is, $v_\delta$ and $w_\delta$ solve
\begin{align*}
 -\Delta v_\delta &= f_0(w_\delta)=f_0(P(U_{\delta,e_n}-U_{\delta,-e_n}))\quad \text{ in }{B},\qquad 
 \partial_\nu v_\delta=0\quad \text{ on }\partial {B},\qquad 
 \int_{B} v_\delta = 0,\\
 -\Delta w_\delta &= f_0(U_{\delta,e_n})-f_0(U_{\delta,-e_n})\quad \text{ in }{B},\qquad 
 \partial_\nu w_\delta=0\quad \text{ on }\partial {B},\qquad 
 \int_{B} w_\delta = 0.
\end{align*}
Then, recalling that $p+1=\frac{2n}{n-2}$ and by \eqref{eq:Kinequality} in Lemma \ref{lemma:Kcontinuous},
\begin{align}\label{eq:estimate||u-v||}
\|v_\delta-w_\delta\|\leq C |f_0(w_\delta) -f_0(U_{\delta,e_n})+f_0(U_{\delta,-e_n})|_{\frac{p+1}{p}}.
\end{align}
We have
\begin{multline*}
|f_0(w_\delta) -f_0(U_{\delta,e_n})+f_0(U_{\delta,-e_n})|_{\frac{p+1}{p}}\\
	\leq |f_0(w_\delta) - f_0(W_\delta)|_{\frac{p+1}{p}}+ |f_0(W_\delta) - f_0(U_{\delta,e_n})+f_0(U_{\delta,-e_n})|_{\frac{p+1}{p}}.
\end{multline*}
The claim now follows from Lemma \ref{Lemma:I_1aux}.
\end{proof}

\begin{lemma}\label{lemma:|i*(f_eps(w_delta)-f_0(w_delta))|}
Let $\eta\in (0,1)$, $d>0$ and $\delta=d\eps$. Then, for every $\gamma\in(0,1)$,
\[
\|K(f_\eps(w_\delta)-f_0(w_\delta))\|
\leq C|f_\eps(w_\delta)-f_0(w_\delta)|_\frac{p+1}{p}=O(\eps^{1-\gamma}).
\]
as $\eps\to 0$, uniformly in $d\in (1/\eta,\eta)$.
\end{lemma}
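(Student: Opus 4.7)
My plan is as follows. The first inequality is immediate from Lemma \ref{lemma:Kcontinuous}(a): note that $\frac{p+1}{p} = \frac{2n}{n+2}$, and the required zero-average hypothesis $\int_B (f_\eps(w_\delta)-f_0(w_\delta))\,dx = 0$ holds by symmetry, since $w_\delta$ is odd in $x_n$ (hence so are both $f_\eps(w_\delta)$ and $f_0(w_\delta)$) and $B$ is invariant under $x_n\mapsto -x_n$. All the real work therefore goes into proving the $L^{(p+1)/p}$-bound $|f_\eps(w_\delta)-f_0(w_\delta)|_{(p+1)/p} = O(\eps^{1-\gamma})$.

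For this, I would start from the pointwise identity
\begin{align*}
f_\eps(w_\delta)-f_0(w_\delta) = w_\delta\,|w_\delta|^{p-1}\bigl(|w_\delta|^\eps - 1\bigr),
\end{align*}
and control the last factor via the elementary inequality
\begin{align*}
\bigl||t|^\eps-1\bigr| \leq C_\gamma\, \eps\,\bigl(|t|^{\eps+\gamma}+|t|^{-\gamma}\bigr), \qquad t\neq 0,
\end{align*}
valid for any sufficiently small $\gamma>0$. This follows by splitting into cases $|t|\geq 1$ and $|t|<1$, using $e^x - 1 \leq x e^x$ (respectively $1-e^{-x}\leq x$) for $x\geq 0$, combined with the standard bound $|\log|t|| \leq C_\gamma(|t|^\gamma + |t|^{-\gamma})$.

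Raising to the power $(p+1)/p$ and using $|w_\delta|\leq C(U_{\delta,e_n}+U_{\delta,-e_n})$ from Corollary \ref{cor:estimates_with_bubbles}, the task reduces to estimating integrals of the form
\begin{align*}
\int_B \bigl(U_{\delta,e_n}+U_{\delta,-e_n}\bigr)^{p+1+s}\,dx
\end{align*}
for $s \in \bigl\{(\eps+\gamma)(p+1)/p,\, -\gamma(p+1)/p\bigr\}$. By Lemma \ref{lemma:LpnormBubble}, both are $O(\delta^{-s(n-2)/2})$; the dominant contribution is the supercritical one ($s>0$), while the subcritical one ($s<0$) gives an even smaller $o(1)$ factor. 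Taking the $p/(p+1)$-th root and reinstating the factor $\eps$ one obtains
\begin{align*}
|f_\eps(w_\delta)-f_0(w_\delta)|_{(p+1)/p} \leq C\eps\,\delta^{-(\eps+\gamma)(n-2)/2}.
\end{align*}
Since $\delta = d\eps$ with $d$ in a compact subinterval of $(0,\infty)$, the factor $\delta^{-\eps(n-2)/2}=e^{-\eps(n-2)(\log d + \log\eps)/2}$ tends to $1$ as $\eps\to 0$, and the bound becomes $O(\eps^{1-\gamma(n-2)/2})$; absorbing the constant $(n-2)/2$ into a renaming of $\gamma$ yields the claim. The only place requiring any care is the bookkeeping of exponents, which must keep $p+1+s$ in the range where Lemma \ref{lemma:LpnormBubble} supplies the required asymptotics; beyond that I expect no conceptual obstacle.
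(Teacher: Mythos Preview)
Your proposal is correct and follows essentially the same route as the paper: the paper also reduces via Lemma~\ref{lemma:Kcontinuous}(a) to the $L^{(p+1)/p}$ bound and then (in Lemma~\ref{lemma:|i*(f_eps(w_delta)-f_0(w_delta))|_aux}) extracts a factor $\eps$, replaces the logarithmic loss by a small power, and concludes with Lemma~\ref{lemma:LpnormBubble}. The only cosmetic difference is that the paper uses the Taylor expansion $f_\eps(t)=f_0(t)+\eps|t|^{p-1}t\log|t|+\eps^2 r_\eps(t)$ together with $|\log|t||\leq C(|t|^\sigma+|t|^{-\sigma})$, whereas you combine these two steps into the single inequality $||t|^\eps-1|\leq C_\gamma\eps(|t|^{\eps+\gamma}+|t|^{-\gamma})$.
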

\begin{proof}
By \eqref{eq:Kinequality} in Lemma \ref{lemma:Kcontinuous},
\[
\|K(f_\eps(w_\delta)-f_0(w_\delta))\|\leq C|f_\eps(w_\delta)-f_0(w_\delta)|_\frac{p+1}{p},
\]
and the statement now follows from Lemma \ref{lemma:|i*(f_eps(w_delta)-f_0(w_delta))|_aux}.
\end{proof}

\begin{lemma}\label{lemma:estimates_snorm1}
Let $\gamma,\eta\in (0,1)$, $d>0$ and $\delta=d\eps$. Then
\[
|K(f_\eps(w_\delta)-f_0(w_\delta))|_{s_\eps} = O(\eps^{1-\gamma})
\]
as $\eps\to 0$, uniformly in $d\in (1/\eta,\eta)$.
\end{lemma}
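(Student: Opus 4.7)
The plan is to apply part (b) of Lemma \ref{lemma:Kcontinuous} with $s=s_\eps$ and then redo, at the slightly perturbed exponent, the kind of pointwise-plus-rescaling estimate used to prove the auxiliary $L^{(p+1)/p}$-bound. To apply the lemma, I check that $s_\eps > p+1 > \frac{n}{n-2}$ and that $\int_{B}(f_\eps(w_\delta)-f_0(w_\delta))=0$: the latter holds because $w_\delta=PW_\delta$ is odd in $x_n$, which makes $f_\eps(w_\delta)-f_0(w_\delta)$ odd in $x_n$ on the symmetric ball $B$. This gives
\[
|K(f_\eps(w_\delta)-f_0(w_\delta))|_{s_\eps} \leq C\,|f_\eps(w_\delta)-f_0(w_\delta)|_{q_\eps}, \qquad q_\eps := \frac{n s_\eps}{n+2 s_\eps}.
\]
A direct computation shows $q_0 = \frac{p+1}{p}$ and $\frac{d}{d\eps}q_\eps > 0$, so $q_\eps = \frac{p+1}{p} + O(\eps)$, lying \emph{above} $\frac{p+1}{p}$. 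On the bounded domain $B$, H\"older's inequality only yields $|\cdot|_{(p+1)/p} \leq C|\cdot|_{q_\eps}$, i.e.\ the wrong direction to reuse the $L^{(p+1)/p}$-bound already obtained in Lemma \ref{lemma:|i*(f_eps(w_delta)-f_0(w_delta))|_aux}; thus I have to work directly at the exponent $q_\eps$.

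Next I use the pointwise inequality
\[
|f_\eps(t)-f_0(t)| = |t|^p\,\bigl|\,|t|^\eps-1\bigr| \leq C\eps\,|t|^p\,(1+|t|^\eps)\,(1+|\log|t||),
\]
which follows from $|e^x-1|\leq |x|e^{|x|}$ with $x=\eps\log|t|$. Combined with the bound $|w_\delta| \leq C(U_{\delta,e_n}+U_{\delta,-e_n})$ from Corollary \ref{cor:estimates_with_bubbles}, and after splitting $B = \omega^+\cup\omega^-\cup (B\setminus(\omega^+\cup\omega^-))$ as in Step 4 of the proof of Proposition \ref{lemma:linear_part}, the contribution from the complement of $\omega^\pm$ is negligible since the bubbles are uniformly bounded there. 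On $\omega^+$, the change of variables $y = (x-e_n)/\delta$ (and analogously on $\omega^-$) turns the relevant piece of $|f_\eps(w_\delta)-f_0(w_\delta)|_{q_\eps}^{q_\eps}$ into
\[
\delta^{\,n - p q_\eps \frac{n-2}{2}} \int U_{1,0}^{p q_\eps}(y)\,\bigl(1+\bigl|\log(\delta^{-(n-2)/2} U_{1,0}(y))\bigr|\bigr)^{q_\eps}\,\bigl(1+\delta^{-\eps(n-2)/2}U_{1,0}(y)^\eps\bigr)^{q_\eps}\,dy,
\]
over a subdomain of $\R^n$. Since $p q_\eps = (p+1) + O(\eps)$ and $(p+1)\frac{n-2}{2}=n$, the Jacobian weight equals $\delta^{O(\eps)}$, which stays uniformly bounded because $\delta = d\eps$ and $\eps^{O(\eps)}\to 1$. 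Moreover, the rescaled integrand decays like $|y|^{-(n-2)p q_\eps}$ at infinity with $(n-2)p q_\eps > n$ for small $\eps$, so its integral over $\R^n$ is bounded uniformly in $\eps$, while the logarithm contributes only a power of $\log(1/\delta)$.

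Combining these estimates and taking $q_\eps$-th roots yields
\[
|f_\eps(w_\delta)-f_0(w_\delta)|_{q_\eps} \leq C\,\eps\,\bigl(1+|\log \eps|\bigr)^{C'} = O(\eps^{1-\gamma}),
\]
for every $\gamma \in (0,1)$, uniformly in $d \in (\eta,1/\eta)$, as required. The main obstacle is the careful bookkeeping of the logarithmic factors—those coming from the pointwise linearisation of $t\mapsto|t|^\eps-1$ and those produced by the $O(\eps)$ shift of the scaling exponent—and verifying that the rescaled integral over the $\delta$-dependent domain stays finite uniformly in $\eps$. Conceptually, the symmetry of $B$ in $x_n$ plays a structural role: it is what makes the $K$-estimate of Lemma \ref{lemma:Kcontinuous}(b) available by ensuring the necessary zero-average condition.
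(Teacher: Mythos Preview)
Your argument is essentially correct, but it takes a more laborious route than the paper and leaves one technicality unaddressed.

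The paper's proof avoids working at the \emph{moving} exponent $s_\eps$ altogether. It fixes $\bar\eps>0$ and $\bar s:=p+1+\bar\eps>s_\eps$, uses H\"older on the bounded domain to pass from $|\cdot|_{s_\eps}$ up to $|\cdot|_{\bar s}$, and then applies Lemma~\ref{lemma:Kcontinuous}(b) at the \emph{fixed} exponent $\bar s$. This produces the norm $|f_\eps(w_\delta)-f_0(w_\delta)|_{n\bar s/(n+2\bar s)}$, and since $\frac{n\bar s}{n+2\bar s}=\frac{(p+1)(1+\ell)}{p}$ for some fixed $\ell>0$, the auxiliary Lemma~\ref{lemma:|i*(f_eps(w_delta)-f_0(w_delta))|_aux} (which is stated for general $\ell\geq 0$, not only $\ell=0$) applies directly. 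No new rescaling computation is needed.

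Your approach instead applies Lemma~\ref{lemma:Kcontinuous}(b) with $s=s_\eps$. The constant in that lemma is stated to depend on $s$, so you implicitly rely on its being uniformly bounded as $s_\eps\to p+1$; this is true (the ADN and Sobolev constants vary continuously away from critical exponents), but it should be said. You then reprove, at $q_\eps$, what Lemma~\ref{lemma:|i*(f_eps(w_delta)-f_0(w_delta))|_aux} already gives for any fixed $\ell>0$. The payoff of the paper's route is modularity: one H\"older step to a fixed exponent decouples the $K$-estimate from $\eps$ and lets the existing auxiliary lemma absorb the remaining work. Your route is self-contained and gives the same bound, at the cost of redoing the blow-up computation and tracking the $\delta^{O(\eps)}$ and logarithmic factors by hand.
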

\begin{proof}
Let $\bar s:=s_{\bar \eps}=p+1+\bar \eps$ and $\eps<\bar \eps$.  By \eqref{eq:Kcontinuous} in Lemma \ref{lemma:Kcontinuous} (observe that $\bar s>n/(n-2)$):
\begin{align*}
|K(f_\eps(w_\delta)-f_0(w_\delta))|_{s_\eps} &\leq |K(f_\eps(w_\delta)-f_0(w_\delta))|_{\bar s} |{B}|^{\frac{\bar s-s_\eps}{\bar s \bar s}}\\
					&\leq C_{\bar \eps} |  f_\eps(w_\delta)-f_0(w_\delta) | _\frac{n\bar s}{n+2\bar s}.
\end{align*}
Taking now $\ell>0$ such that $\frac{n\bar s}{n+2\bar s}=\frac{(p+1)(1+\ell)}{p}$, we can now conclude from Lemma \ref{lemma:|i*(f_eps(w_delta)-f_0(w_delta))|_aux}.
\end{proof}

\begin{lemma}\label{lemma:|K(f_0(w_delta))-w_delta |_s_eps}
Let $\gamma,\eta \in (0,1)$, $d>0$ and $\delta=d\eps$. Then there exists $C>0$ such that
\[
|K(f_0(w_\delta))-w_\delta |_{s_\eps} =O( \eps^{1-\gamma})
\]
as $\eps\to 0$, uniformly in $d\in (1/\eta,\eta)$.
\end{lemma}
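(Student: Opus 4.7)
The plan is to mirror the proof of Lemma~\ref{lemma:estimates_snorm1}: I would upgrade the $H^1$-estimate of Lemma~\ref{lemma:I_1} to an $L^{s_\eps}$-estimate by passing through a slightly higher exponent. The starting observation is that $w_\delta=K(f_0(U_{\delta,e_n})-f_0(U_{\delta,-e_n}))$ (which uses the compatibility identity $\int_{B}(U_{\delta,e_n}^p-U_{\delta,-e_n}^p)=0$), so by linearity of $K$,
\[
K(f_0(w_\delta))-w_\delta
= K\!\left(f_0(w_\delta)-f_0(U_{\delta,e_n})+f_0(U_{\delta,-e_n})\right).
\]

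Next, I would fix $\bar\eps>\eps$ small and set $\bar s:=s_{\bar\eps}=p+1+\bar\eps$, so that $\bar s>\tfrac{n}{n-2}$ and $\bar s\geq s_\eps$. Applying H\"older on the bounded domain ${B}$ followed by the continuous embedding \eqref{eq:Kcontinuous} of Lemma~\ref{lemma:Kcontinuous}(b) with $s=\bar s$, the identity above gives
\[
|K(f_0(w_\delta))-w_\delta|_{s_\eps}
\leq C_{\bar\eps}\,
\left|f_0(w_\delta)-f_0(U_{\delta,e_n})+f_0(U_{\delta,-e_n})\right|_{\frac{n\bar s}{n+2\bar s}}.
\]
Since $\frac{n\bar s}{n+2\bar s}\to \frac{p+1}{p}$ as $\bar\eps\to 0^+$, I can arrange $\frac{n\bar s}{n+2\bar s}=\frac{p+1}{p}(1+\ell)$ for an arbitrarily small $\ell>0$ by a suitable choice of $\bar\eps$.

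It then remains to prove $\bigl|f_0(w_\delta)-f_0(U_{\delta,e_n})+f_0(U_{\delta,-e_n})\bigr|_{\frac{p+1}{p}(1+\ell)}=O(\eps^{1-\gamma})$. I would split this quantity as $[f_0(w_\delta)-f_0(W_\delta)]+[f_0(W_\delta)-f_0(U_{\delta,e_n})+f_0(U_{\delta,-e_n})]$, treating the first bracket via the mean value theorem combined with Corollary~\ref{cor:estimates_with_bubbles} (which controls $|PW_\delta-W_\delta|$ by a bubble raised to a power $\tau\in[\tfrac{n-3}{n-2},1]$), and the second via the elementary pointwise inequality $\bigl||a-b|^{p-1}(a-b)-|a|^{p-1}a+|b|^{p-1}b\bigr|\leq C(|a|^{p-1}|b|+|a||b|^{p-1})$ with $a=U_{\delta,e_n}$, $b=U_{\delta,-e_n}$, exploiting that on ${B}$ the two bubbles are essentially supported on disjoint halves. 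Both reductions follow the same pattern as in Lemma~\ref{Lemma:I_1aux}, where the endpoint exponent $\frac{p+1}{p}$ was handled.

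The main obstacle is redoing those integral estimates with the slightly larger exponent $\frac{p+1}{p}(1+\ell)$ and verifying that the correction is at most of order $\delta^{-c\ell}=\eps^{-c\ell}$ for some $c>0$. Since $\frac{p+1}{p}$ is the critical conjugate exponent, small perturbations interact delicately with the scaling of the bubbles near $\pm e_n$; however, because $\gamma>0$ is arbitrary, the loss $\eps^{-c\ell}$ can always be absorbed into $\eps^{1-\gamma}$ by taking $\bar\eps$ (and therefore $\ell$) sufficiently small in terms of $\gamma$, which yields the desired bound $O(\eps^{1-\gamma})$ uniformly in $d\in(1/\eta,\eta)$.
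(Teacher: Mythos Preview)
Your proposal is correct and follows essentially the same route as the paper: rewrite $K(f_0(w_\delta))-w_\delta=K(f_0(w_\delta)-f_0(U_{\delta,e_n})+f_0(U_{\delta,-e_n}))$, pass to a slightly larger exponent $\bar s$ via H\"older and Lemma~\ref{lemma:Kcontinuous}(b), split through $W_\delta$, and conclude by the $\gamma>0$ case of Lemma~\ref{Lemma:I_1aux}. The only remark is that Lemma~\ref{Lemma:I_1aux} already covers the perturbed exponent $\frac{(p+1)(1+\ell)}{p}$, so there is no need to redo those integral estimates---you can invoke it directly, exactly as the paper does.
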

\begin{proof}
 Let $u:= K(f_0(w_\delta))$, $w=w_\delta$, and observe that $v:=u-w$ is a solution to
\[
-\Delta v=f_0(w_\delta)-f_0(U_{\delta,e_n})+f_0(U_{\delta,-e_n}) \text{ in } {B},\qquad \partial_\nu v=0 \text{ on } \partial {B},\qquad \int_{B} v=0,
\]
that is, $v=K(f_0(w_\delta)-f_0(U_{\delta,e_n})+f_0(U_{\delta,-e_n}))$.
Take $\bar \eps>0$ small and $\bar s=p+1+\bar \eps$. By \eqref{eq:Kcontinuous} in Lemma \ref{lemma:Kcontinuous}:
\begin{align*}
|u-w|_{s_\eps} &\leq |u-w|_{\bar s} |{B}|^{\frac{\bar s-s_\eps}{s_\eps \bar s}}\\
					& \leq C'_{\bar \eps} |f_0(w_\delta)-f_0(U_{\delta,e_n})+f_0(U_{\delta,-e_n})|_{\frac{n\bar s}{n+2\bar s}} \\
					&\leq C'_{\bar \eps} |f_0(w_\delta)-f_0(W_\delta)|_{\frac{n\bar s}{n+2\bar s}}  + | f_0(W_\delta) - f_0(U_{\delta,e_n})+f_0(U_{\delta,-e_n})|_{\frac{n\bar s}{n+2\bar s}}
\end{align*}
Writing $\frac{n\bar s}{n+2\bar s}=\frac{(p+1)(1+\gamma)}{p}$,
by Lemma \ref{Lemma:I_1aux} we deduce that, for every $\sigma>0$ small there exists $\gamma>0$ such that 
\[
|f_0(w_\delta) - f_0(W_\delta)|_{\frac{(p+1)(1+\gamma)}{p}}+ | f_0(W_\delta) - f_0(U_{\delta,e_n})+f_0(U_{\delta,-e_n})|_{\frac{(p+1)(1+\gamma)}{p}}=O(\eps^{1-\sigma})
\]
as $\eps\to 0$, and the proof of the lemma follows.
 \end{proof}

\begin{proof}[Proof of Proposition \ref{prop:estimate_remainderR}]

This follows from combining equation \eqref{eq:estimateR_d_eps} with Lemmas \ref{lemma:I_1}--\ref{lemma:|K(f_0(w_delta))-w_delta |_s_eps}
\end{proof}

\subsection{Estimates for the nonlinear part
\texorpdfstring{$\mathbf{N_{d,\eps}}$}{}. Conclusion of the proof of Proposition \ref{prop:C^1phi}}\label{sec:3.3}

In this subsection we conclude the proof of Proposition \ref{prop:C^1phi}. By Proposition \ref{lemma:linear_part}, we know that the linear operator $L_{d,\eps}$ is invertible. Therefore, equation \eqref{w_projNperp}, that is,
\begin{align*}
    L_{d,\eps}\phi = N_{d,\eps}(\phi) + R_{d,\eps},
\end{align*}
is equivalent to the fixed point problem
\begin{equation*}
\phi=L_{d,\eps}^{-1}(R_{d,\eps}+N_{d,\eps}(\phi))=:T_{d,\eps}(\phi).
\end{equation*}
By Proposition \ref{lemma:linear_part} we have
\[
\| T_{d,\eps}(\phi)\|_{H_\eps} \leq C (\| R_{d,\eps} \|_{H_\eps}+\| N_{d,\eps}(\phi)\|_{H_\eps}).
\]
In the previous subsection we have shown that, for every $\gamma\in (0,1)$, $\| R_{d,\eps} \|_{H_\eps}=O(\eps^{1-\gamma})$. Next we perform an estimate for the other term.

\begin{lemma}
Let $\eta \in (0,1)$, $d>0$ and $\delta=d\eps$. Then there exists $C>0$ such that
\[
\| N_{d,\eps}(\phi)\|_{H_\eps}\leq 
\begin{cases}
C (\|\phi\|^{p+\eps}_{H_\eps}+\|\phi\|_{H_\eps}^{p+\frac{2\eps}{n}}) & \text{ if } n>6,\\
C (\|\phi\|^{p+\eps}_{H_\eps}+\|\phi\|_{H_\eps}^{p+\frac{2\eps}{n}}+\|\phi\|_{H_\eps}^2) & \text{ if } n\in [4,6)
\end{cases}
\] 
as $\eps\to 0$, uniformly in $d\in (1/\eta,\eta)$.
\end{lemma}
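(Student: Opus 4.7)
The plan is to follow the standard Lyapunov--Schmidt scheme. Set $g_\eps := f_\eps(w_\delta+\phi) - f_\eps(w_\delta) - f'_\eps(w_\delta)\phi$, so that $N_{d,\eps}(\phi) = \Pi_\eps^\perp \circ K(g_\eps)$. By Lemma~\ref{lemma:projection_continuous}, one has $\|N_{d,\eps}(\phi)\|_{H_\eps} \leq C(\|K(g_\eps)\| + |K(g_\eps)|_{s_\eps})$, and Lemma~\ref{lemma:Kcontinuous} converts both quantities to $L^t$-norms of $g_\eps$: $\|K(g_\eps)\| \leq C|g_\eps|_{2n/(n+2)}$ and, using a slight enlargement $\bar s > s_\eps$ as in the proof of Lemma~\ref{lemma:estimates_snorm1}, $|K(g_\eps)|_{s_\eps} \leq C|g_\eps|_{n\bar s/(n+2\bar s)}$. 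It then remains to bound these two $L^t$-norms.

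The key pointwise estimates on $g_\eps$ come from the representation $g_\eps(x) = \int_0^1 [f'_\eps(w_\delta + t\phi) - f'_\eps(w_\delta)]\phi\, dt$ combined with properties of $f_\eps(t) = |t|^{p-1+\eps}t$. For $n \in [4,6)$ (so that $p+\eps \geq 2$), $f_\eps \in C^2$ with $|f''_\eps(t)| \leq C|t|^{p-2+\eps}$, and a second-order Taylor argument yields $|g_\eps| \leq C(|w_\delta|^{p-2+\eps}|\phi|^2 + |\phi|^{p+\eps})$. For $n > 6$ (so that $p+\eps < 2$), $f'_\eps$ is only $(p-1+\eps)$-H\"older continuous, and one obtains only the weaker $|g_\eps| \leq C|\phi|^{p+\eps}$.

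I then apply H\"older's inequality to these pointwise bounds, combining $|\phi|_{p+1} \leq C\|\phi\|$ (Sobolev embedding), $|\phi|_{s_\eps} \leq \|\phi\|_{H_\eps}$, and, for $w_\delta$, the $L^r$-bounds from Corollary~\ref{cor:estimates_with_bubbles} and Lemma~\ref{lemma:LpnormBubble}. The two distinct powers $\|\phi\|_{H_\eps}^{p+\eps}$ and $\|\phi\|_{H_\eps}^{p+2\eps/n}$ arise from two different splittings of $|\phi|^{p+\eps}$: the decomposition $|\phi|^{p+\eps} = |\phi|^{p-1}\cdot|\phi|^{1+\eps}$ exploited through the $L^{s_\eps}$ bound yields the exponent $p+\eps$, while the splitting $|\phi|^{p+\eps} = |\phi|^{p+2\eps/n}\cdot |\phi|^{\eps(n-2)/n}$, with the small-exponent factor absorbed via Sobolev embedding, yields $p+2\eps/n$. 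For $n \in [4,6)$, the quadratic remainder $|w_\delta|^{p-2+\eps}|\phi|^2$ produces the additional $\|\phi\|_{H_\eps}^2$ term: H\"older with $|\phi|_{p+1}^2 \leq C\|\phi\|^2$ and a uniformly bounded $L^r$-norm of $|w_\delta|^{p-2+\eps}$ (via Corollary~\ref{cor:estimates_with_bubbles} and Lemma~\ref{lemma:LpnormBubble}) gives the required estimate.

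The main obstacle is the exponent-matching in the last step: the natural $L^{2n/(n+2)}$-norm of $|\phi|^{p+\eps}$ would require control of $\phi$ in an $L^r$ space with $r$ slightly above $s_\eps$, which is just beyond what the $H_\eps$-norm provides. The two H\"older splittings above circumvent this at the cost of producing two distinct powers of $\|\phi\|_{H_\eps}$, both of which must appear in the final bound.
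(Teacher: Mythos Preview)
Your overall strategy matches the paper's: reduce $\|N_{d,\eps}(\phi)\|_{H_\eps}$ via Lemmas~\ref{lemma:projection_continuous} and~\ref{lemma:Kcontinuous} to $|g_\eps|_{(p+1)/p}$ and an $L^t$-norm controlling $|K(g_\eps)|_{s_\eps}$, then insert the pointwise Taylor bounds from Lemma~\ref{lemma:TaylorInequalities} and estimate with H\"older. The treatment of the quadratic term $|w_\delta|^{p-2+\eps}\phi^2$ for $n\in[4,6)$ is also as in the paper.

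However, your explanation of where the two exponents $p+\eps$ and $p+2\eps/n$ come from is not what the paper does, and the ``two H\"older splittings of $|\phi|^{p+\eps}$'' you describe do not actually produce those bounds. In the paper there is \emph{no} splitting of $|\phi|^{p+\eps}$ at all: one simply rewrites
\[
\bigl||\phi|^{p+\eps}\bigr|_{\frac{p+1}{p}}=|\phi|^{\,p+\eps}_{\frac{(p+1)(p+\eps)}{p}}
\quad\text{and}\quad
\bigl||\phi|^{p+\eps}\bigr|_{\frac{ns_\eps}{n+2s_\eps}}=|\phi|^{\,p+\eps}_{\frac{(p+\eps)ns_\eps}{n+2s_\eps}},
\]
and then controls each by a power of $|\phi|_{s_\eps}$; the second one is written as $|\phi|_{s_\eps}^{(n+2s_\eps)/n}$, and a direct computation gives $(n+2s_\eps)/n=p+2\eps/n$. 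So the two distinct powers arise from the two different target Lebesgue exponents (one for the $H^1$-part, one for the $L^{s_\eps}$-part), not from factoring $|\phi|^{p+\eps}$. Your proposed decomposition $|\phi|^{p+\eps}=|\phi|^{p+2\eps/n}\cdot|\phi|^{\eps(n-2)/n}$ with the second factor ``absorbed via Sobolev embedding'' does not make sense: Sobolev does not absorb a pointwise factor of $\phi$.

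A minor point: the paper applies Lemma~\ref{lemma:Kcontinuous}(b) directly with $s=s_\eps$ rather than passing through an enlarged $\bar s>s_\eps$. Your use of $\bar s$ is not wrong, but it is unnecessary here (the exponents in Lemma~\ref{lemma:Kcontinuous}(b) can be taken uniformly for $s$ in the compact range $[p+1,p+1+\eps_0]$), and it does not resolve the ``exponent-matching obstacle'' you mention in the last paragraph.
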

\begin{proof}
Combining Lemmas \ref{lemma:Kcontinuous} and \ref{lemma:projection_continuous}
 yields the existence of $C>0$ such that 
\begin{multline*}
\| N_{d,\eps}(\phi)\|_{H_\eps} \leq C (|f_\eps(w_\delta+\phi)-f_\eps(w_\delta)-f'_\eps(w_\delta)\phi|_\frac{p+1}{p}+|f_\eps(w_\delta+\phi)-f_\eps(w_\delta)-f'_\eps(w_\delta)\phi|_\frac{ns_\eps}{n+2s_\eps}).
\end{multline*}
By Lemma \ref{lemma:TaylorInequalities},  there exists $C>0$ such that, for $\eps$ sufficiently small,
\[
|f_\eps(w_\delta+\phi)-f_\eps(w_\delta)-f'_\eps(w_\delta)\phi| \leq \begin{cases} C(|w_\delta|^{p-2+\eps}\phi^2+|\phi|^{p+\eps}) & \text{ if } n\in [4,6],\\
C|\phi|^{p+\eps} & \text{ if } n>6.
\end{cases}
\]
Therefore, for $n>6$, we see that
\begin{align*}
\| T_{d,\eps}(\phi)\|_{H_\eps} &\leq C (||\phi|^{p+\eps}|_\frac{p+1}{p}+||\phi|^{p+\eps}|_\frac{ns_\eps}{n+2s_\eps})\leq C ( |\phi|^{p+\eps}_\frac{(p+1)(p+\eps)}{p}+|\phi|_{s_\eps}^\frac{n+2s_\eps}{n})\\
&\leq C'(|\phi|^{p+\eps}_{s_\eps}+|\phi|_{s_\eps}^{p+\frac{2\eps}{n}})\leq C'(\|\phi\|^{p+\eps}_{H_\eps}+\|\phi\|_{H_\eps}^{p+\frac{2\eps}{n}}).
\end{align*}
For $n\in [4,6]$, one should also take into account the terms
\[
||w_\delta|^{p-2+\eps}\phi^2|_\frac{p+1}{p}\leq |w_\delta|^{p-2+\eps}_{\frac{(p-2+\eps)2ns_\eps}{(n+2)s_\eps-4n}} |\phi|_{s_\eps}^2
\]
and
\[
||w_\delta|^{p-2+\eps}\phi^2|_{\frac{ns_\eps}{n+2s_\eps}} \leq |w_\delta|_{\frac{(p-2+\eps)ns_\eps}{2s_\eps-n}}^{p-2+\eps} |\phi|_{s_\eps}^2.
\]
Since $\frac{(p-2+\eps)2ns_\eps}{(n+2)s_\eps-4n}=\frac{n}{2}+O(\eps)$ and $\frac{(p-2+\eps)ns_\eps}{2s_\eps-n}=\frac{n}{2}+O(\eps)$, we have that
$|w_\delta|^{p-2+\eps}_{\frac{(p-2+\eps)2ns_\eps}{(n+2)s_\eps-4n}}, |w_\delta|_{\frac{(p-2+\eps)ns_\eps}{2s_\eps-n}}^{p-2+\eps} =O(1)$, by Corollary \ref{cor:estimates_with_bubbles} and Lemma \ref{lemma:LpnormBubble}. This ends the proof.
\end{proof}

\smallbreak

\begin{proof}[Conclusion of the proof of Proposition \ref{prop:C^1phi}]
Let $\kappa$ be such that $ |R_{d,\eps}|\leq 2\kappa \sqrt{\eps}$ (take $\gamma=1/2$ in Proposition \ref{prop:estimate_remainderR}). Therefore, by taking
\[
\mathcal B:=\left\{\phi \in \Theta_{d,\eps}^\perp\ :\ \|\phi\|_{H_\eps}\leq \kappa \sqrt{\eps}\right\},
\]
we have $T_{d,\eps}(\mathcal B)\subset \mathcal{B}$ for sufficiently small $\eps>0$.
 Moreover, reasoning as in, for instance, \cite[pp. 18-19]{MP10}, we obtain the existence of $L\in (0,1)$ such that
\[
\|T_{d,\eps}(\phi_1-\phi_2)\|_{H_\eps} \leq L \| \phi_1- \phi_2 \|_{H_\eps}\ \text{ for every } \phi_1,\phi_2\in  \mathcal{B}.
\]
Therefore, by the Banach Fixed Point Theorem, given $\eta\in (0,1)$, $d\in (\eta,1/\eta)$, $\delta=d\eps$, for $\eps$ sufficiently small there exists a unique $\phi_{d,\eps}$, fixed point of $T_{d,\eps}$, that is, a unique solution of \ref{eq:w_projN2}. Now a  standard argument using the implicit function theorem (see for instance \cite[Lemma 3.3]{PistoiaTavares} for detailed computations in a related framework) yields that the map $d\mapsto \phi_{d,\eps}$ is of class $C^1$.
\end{proof}

\section{Expansion of the reduced functional} \label{sec:red}

Consider the functional $F_\eps:H_{\eps}\to \R$ defined by
\begin{align*}
  F_\eps(u)=\int_{B} \frac{|\nabla u|^2}{2} - \frac{|u|^{p+1+\eps}}{p+1+\eps}\, dx.
\end{align*}
and observe that critical points correspond to solutions to \eqref{super}. For each small $\eta$ fixed, let $\eps_0>0$ be as in Proposition \ref{prop:C^1phi}. Then for  $\eps\in (0,\eps_0)$ (and recalling that $\delta=d \eps$), we consider the reduced functional $J_\eps:(\eta,1/\eta)\to \R$ given by
\begin{align*}
J_\eps(d) := F_\eps(P W_{\delta}+\phi_{d, \eps})=F_\eps(P W_{d\eps}+\phi_{d, \eps}),
\end{align*}
where $\phi_{d, \eps}\in  \Theta_{d,\eps}^\perp$ is as in  Proposition \ref{prop:C^1phi}. The following results says that, whenever we find a critical point of $J_\eps$, we obtain a solution of \eqref{super} having the form $PW_{d\eps}+\phi_{d,\eps}$.

\begin{lemma}\label{eq:criticalpointJ}
For small $\eta>0$, $\eps\in (0,\eps_0)$ and $d\in (0,\eps)$, we have
\[
J_\eps'(d)=0 \iff F_\eps'(PW_{d\eps}+\phi_{d,\eps})=0.
\]
\end{lemma}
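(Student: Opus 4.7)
The forward implication $(\Leftarrow)$ is immediate from the chain rule: if $F_\eps'(u)=0$ for $u:=PW_{d\eps}+\phi_{d,\eps}$, then $J_\eps'(d)=F_\eps'(u)[\partial_d u]=0$ (the $C^1$-dependence of $d\mapsto \phi_{d,\eps}$ from Proposition \ref{prop:C^1phi} guarantees $\partial_d u\in H_\eps$). The nontrivial content is the direction $(\Rightarrow)$.

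The plan for $(\Rightarrow)$ is as follows. First I would exploit the fact that by construction, $\phi_{d,\eps}$ solves \eqref{eq:w_projN2}, which is obtained from the full equation \eqref{w} after applying the projection $\Pi_\eps^\perp$. Undoing this projection yields a scalar $c=c(d,\eps)\in\R$ such that
\[
u - K(f_\eps(u)) = c\, PZ_\delta, \qquad PZ_\delta:=\delta\partial_\delta w_\delta.
\]
Since every $v\in H_\eps$ is odd in $x_n$ and hence has zero mean, $\int_B f_\eps(u)v=\langle K(f_\eps(u)),v\rangle$ (the defining identity for $K$ together with $\partial_\nu u=0$ in the boundary term), so
\[
F_\eps'(u)[v] = \langle u,v\rangle - \int_B f_\eps(u)\,v = c\,\langle PZ_\delta,v\rangle \qquad \forall\, v\in H_\eps.
\]
Because $PZ_\delta\in H_\eps$ with $\|PZ_\delta\|$ bounded below by Lemma \ref{delta:lem}, this identity shows $F_\eps'(u)=0\iff c=0$.

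By the chain rule, $J_\eps'(d) = c\,\langle PZ_\delta,\partial_d u\rangle$, so it remains to show that $\langle PZ_\delta,\partial_d u\rangle$ is bounded away from zero for $d\in(\eta,1/\eta)$ and $\eps$ small, which will force $c=0$ whenever $J_\eps'(d)=0$. Using $\partial_d PW_{d\eps}=\eps\,\partial_\delta PW_\delta\big|_{\delta=d\eps}=\tfrac{1}{d}PZ_\delta$ I obtain
\[
\langle PZ_\delta,\partial_d u\rangle = \tfrac{1}{d}\|PZ_\delta\|^2 + \langle PZ_\delta,\partial_d\phi_{d,\eps}\rangle.
\]
The first term converges to $\kappa/d>0$ by Lemma \ref{delta:lem}. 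For the second term I would differentiate the orthogonality $\langle PZ_{d\eps},\phi_{d,\eps}\rangle=0$, valid for every $d$, to obtain
\[
\langle PZ_\delta,\partial_d\phi_{d,\eps}\rangle = -\langle \partial_d PZ_{d\eps},\phi_{d,\eps}\rangle,
\]
and estimate by Cauchy--Schwarz using the smallness $\|\phi_{d,\eps}\|=o(\eps^{1-\gamma})$ from Proposition \ref{prop:C^1phi}.

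The single technical point is to verify $\|\partial_d PZ_{d\eps}\|=O(1)$. Writing $\partial_d PZ_{d\eps} = \eps\partial_\delta PW_\delta + \eps\,\delta\,\partial_\delta^2 PW_\delta$ at $\delta=d\eps$, each summand is of the form $\eps$ times a $\delta$-derivative of $PW_\delta$; combining the expansion of Lemma \ref{exp:lem} with the bound $\|\delta\partial_\delta PW_\delta\|=\|PZ_\delta\|=O(1)$ and an analogous bound on $\|\delta^2\partial_\delta^2 PW_\delta\|$ (obtained by one further $\delta$-differentiation of the asymptotics in Lemma \ref{exp:lem}), these terms stay bounded uniformly in $d\in(\eta,1/\eta)$. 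Granted this, $\langle PZ_\delta,\partial_d u\rangle = \kappa/d + o(1)$ is positive for small $\eps$, so $J_\eps'(d)=0$ forces $c=0$ and thus $F_\eps'(u)=0$. The verification of the scaling bound on $\|\partial_d PZ_{d\eps}\|$ is the only place requiring care; everything else is a direct consequence of the Lyapunov--Schmidt framework already established.
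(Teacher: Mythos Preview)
Your argument is correct and is precisely the standard Lyapunov--Schmidt computation that the paper defers to the references \cite{MP10,PistoiaTavares,PistoiaSoaveTavares}; the paper gives no independent proof, so there is nothing substantive to compare. The one point you flag---the bound $\|\delta^2\partial_\delta^2 PW_\delta\|=O(1)$---is not literally contained in Lemma~\ref{exp:lem} (which stops at first $\delta$-derivatives), but it follows by the same method and is routine in this framework, so your caveat is appropriately placed.
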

\begin{proof}
This a consequence of standard arguments, see for instance \cite[Proposition 2.2]{MP10} or \cite[Proposition 3.4]{PistoiaTavares}, \cite[Lemma 4.1]{PistoiaSoaveTavares}.
\end{proof}

For $n\geq 4$, recall that $\alpha_n=(n(n-2))^\frac{n-2}{4}$ and let 
\begin{equation}\label{fraks}
\begin{aligned}
\mathfrak{A}&= \int_{\R^n} U_{1,0}^{p+1} = \int_{\R^n} \frac{\alpha_n^{p+1}}{(1+|x|^2)^n}\, dx ,\\
\mathfrak{B}&= \int_{\partial\R^n_+} |U_{1,0}(y)|^{p+1}|y|^2=\alpha_n^{p+1} \int_{\R^{n-1}} \frac{|y|^2}{(1+|y|^2)^n}\, dy ,\\
\mathfrak{C}&=\alpha_n\int_{\R^{n-1}}
 \frac{|y|^2}{(1+|y|^2)^{n-1}},\\
\mathfrak{D}&=\int_{\R^n} U_{1,0}^{p+1} \log U_{1,0}.
\end{aligned}
\end{equation}

The main goal is to check that $J_\eps$ has an absolute minimizer in $(\eta,1/\eta)$ for sufficiently small $\eta$, therefore a critical point. In order to prove this, we compute the expansion of $J_\eps(d)$ as $\eps\to 0$.

\begin{thm}\label{epxansion:thm}
Given $0<\eta<1$ small we have
\begin{align*}
J_\eps(d)=\frac{\mathfrak{A}}{n}+ \frac{(n-2)^2}{4n}\mathfrak{A}\eps \log \eps +\Psi(d) \eps + o(\eps),
\end{align*}
as $\eps\to 0$ uniformly in $d\in (\eta,1/\eta)$, where
\begin{align}\label{eq:Psi(d)}
    \Psi(d):=\frac{n-2}{2n}\left(\frac{n-2}{2n}\mathfrak{A}-\mathfrak{D}\right)+\frac{(n-2)^2}{4n}\mathfrak{A} \log d - \left(\frac{n-2}{2}\mathfrak{C}+\frac{\mathfrak{B}}{n}\right) d.
\end{align}
\end{thm}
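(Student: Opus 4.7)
The plan is to Taylor-expand $F_\eps(PW_{d\eps} + \phi_{d,\eps})$ around $PW_{d\eps}$, reduce to computing $F_\eps(PW_{d\eps})$ modulo $o(\eps)$, and then perform a careful asymptotic expansion of the latter using Lemma \ref{exp:lem} and Corollary \ref{cor:resc_asympt} to identify the stated coefficients.

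\smallskip\noindent\textbf{Step 1 (Reduction to $F_\eps(PW_{d\eps})$).} A Taylor expansion yields
\[
F_\eps(PW_\delta + \phi_{d,\eps}) - F_\eps(PW_\delta) = F_\eps'(PW_\delta)[\phi_{d,\eps}] + O\bigl(\|\phi_{d,\eps}\|^2 + \|\phi_{d,\eps}\|_{H_\eps}^{p+1+\eps}\bigr).
\]
By Proposition \ref{prop:C^1phi}, $\|\phi_{d,\eps}\|_{H_\eps} = o(\eps^{1-\gamma})$ for every small $\gamma>0$, so the quadratic and higher-order remainder is $o(\eps)$. For the linear term, integration by parts (using $\partial_\nu PW_\delta = 0$, $\int_B \phi_{d,\eps} = 0$, and $-\Delta PW_\delta = U_{\delta,e_n}^p - U_{\delta,-e_n}^p$) gives
\[
F_\eps'(PW_\delta)[\phi_{d,\eps}] = \int_B \bigl[(U_{\delta,e_n}^p - U_{\delta,-e_n}^p) - f_\eps(PW_\delta)\bigr]\phi_{d,\eps},
\]
which by H\"older's inequality together with the residual estimates of Section \ref{sec:3.2} (namely $\|R_{d,\eps}\|_{H_\eps} = O(\eps^{1-\gamma})$) is $O(\eps^{1-\gamma})\cdot o(\eps^{1-\gamma}) = o(\eps)$ for $\gamma<1/2$. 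Hence $J_\eps(d) = F_\eps(PW_{d\eps}) + o(\eps)$.

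\smallskip\noindent\textbf{Step 2 (Expansion of $F_\eps(PW_\delta)$).} For the kinetic part, integration by parts combined with the odd symmetry of $PW_\delta$ in $x_n$ gives
\[
\tfrac{1}{2}\int_B |\nabla PW_\delta|^2 = \int_B PW_\delta\, U_{\delta,e_n}^p.
\]
Rescaling via $x = e_n + \delta y$, Corollary \ref{cor:resc_asympt} shows that the leading order equals $\int_{\R^n_-}U_{1,0}^{p+1}dy = \mathfrak{A}/2$. The $\delta$-linear correction has two sources: (i) the $-\delta\varphi_0(-y)$ term of Corollary \ref{cor:resc_asympt} produces $\int_{\R^n_-}\varphi_0(-y)U_{1,0}^p(y)dy$, which after the substitution $z=-y$ and integration by parts (using $-\Delta U_{1,0} = U_{1,0}^p$, $-\Delta \varphi_0 = 0$, $\partial_{y_n}U_{1,0}|_{y_n=0}=0$, and the explicit Neumann data for $\varphi_0$) reduces to a boundary integral of $\mathfrak{C}$-type; (ii) the rescaled domain $(\omega^+ - e_n)/\delta = \{|y|<1/(2\delta),\ y_n < -\delta|y|^2/2\}$ tends to $\R^n_-$ up to a parabolic layer of width $O(\delta)$, whose contribution to $\int U_{1,0}^{p+1}$ is a boundary integral of $\mathfrak{B}$-type. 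For the potential part, we Taylor-expand
\[
|t|^{p+1+\eps} = |t|^{p+1}\bigl(1 + \eps\log|t| + O(\eps^2 \log^2|t|)\bigr),
\]
and note that $\log|PW_\delta(\delta y + e_n)| = -\tfrac{n-2}{2}\log\delta + \log U_{1,0}(y) + O(\delta)$. Using $\log\delta = \log d + \log \eps$ (since $\delta = d\eps$), this yields the $\eps\log\eps$ and $\eps\log d$ contributions, both with coefficient $\tfrac{(n-2)^2}{4n}\mathfrak{A}$ (via $p+1 = 2n/(n-2)$), together with the $\mathfrak{D}$ contribution. Expanding $\tfrac{1}{p+1+\eps} = \tfrac{n-2}{2n}\bigl(1 - \tfrac{\eps}{p+1} + O(\eps^2)\bigr)$ produces the $\tfrac{n-2}{2n}\cdot\tfrac{n-2}{2n}\mathfrak{A}$ part of the constant-in-$d$ term of $\Psi$. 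The $\delta$-linear corrections to $\int_B|PW_\delta|^{p+1+\eps}$ are treated as in the kinetic part, yielding further $\mathfrak{B}$- and $\mathfrak{C}$-type contributions.

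\smallskip\noindent\textbf{Main obstacle and conclusion.} Collecting all contributions and substituting $\delta = d\eps$ yields the stated expansion. The $\eps\log\eps$ coefficient and the constant-in-$d$ part of $\Psi(d)$ emerge cleanly from the Taylor expansion in $\eps$, while the $\log d$ term arises via $\log\delta = \log\eps + \log d$. The main technical challenge is to identify and combine the two independent $\delta$-linear contributions: the $\mathfrak{B}$-type terms, encoding the boundary curvature of $\partial B$ at $\pm e_n$ through the parabolic deformation of $(\omega^\pm \mp e_n)/\delta$, and the $\mathfrak{C}$-type terms, encoding the $\varphi_0$-correction evaluated via integration by parts using the Neumann data of $\varphi_0$, both in the kinetic and in the potential energy. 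Verifying that these combine to produce precisely the coefficient $-\bigl(\tfrac{n-2}{2}\mathfrak{C} + \mathfrak{B}/n\bigr)$ of $d$ in $\Psi(d)$ is the main arithmetic hurdle; the remaining terms of $\Psi(d)$ follow from the $\eps$-expansion with much less bookkeeping.
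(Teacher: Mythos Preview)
Your proposal is correct and follows essentially the same approach as the paper: Step 1 is the content of Lemma \ref{lemma:mainterm}, and Step 2 is carried out in detail in Propositions \ref{prop:exp:1} and \ref{prop:exp:2} (with the $\mathfrak{B}$- and $\mathfrak{C}$-type contributions computed separately in Lemmas \ref{AB:lem}, \ref{I2:1}, \ref{I2:2}, and \ref{J1lem}). The arithmetic verification you flag as the main hurdle is exactly what the paper does explicitly in these lemmas, and the final combination is the one-line proof of Theorem \ref{epxansion:thm}.
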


This section is devoted to the proof of this result, which we split in several lemmas.
\begin{lemma}\label{lemma:mainterm}
Given $0<\eta<1$ small, 
\begin{align*}
    J_\eps(d)=F_\eps(PW_\delta) + o(\eps)
\end{align*}
as $\eps\to 0$ uniformly in $d\in (\eta,1/\eta)$.
\end{lemma}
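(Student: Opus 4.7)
The plan is a second-order Taylor expansion of $F_\eps$ at $w_\delta = PW_{d\eps}$. Since $F_\eps \in C^2(H_\eps;\R)$ for small $\eps>0$, I can write
\[
J_\eps(d) - F_\eps(w_\delta) = F_\eps'(w_\delta)[\phi_{d,\eps}] + \int_0^1 (1-t)\, F_\eps''(w_\delta + t\phi_{d,\eps})[\phi_{d,\eps}, \phi_{d,\eps}]\, dt,
\]
and the task reduces to showing that each of the two terms on the right is $o(\eps)$, uniformly in $d \in (\eta, 1/\eta)$.

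The linear term is handled using the orthogonality $\phi_{d,\eps} \in \Theta_{d,\eps}^\perp$. Since $\partial_\nu w_\delta = 0$ on $\partial B$ and $f_\eps(w_\delta)$ has zero average (by oddness in $x_n$), integration by parts together with the definition of $K$ gives
\[
F_\eps'(w_\delta)[\phi_{d,\eps}] = \langle w_\delta - K(f_\eps(w_\delta)),\, \phi_{d,\eps} \rangle.
\]
Decomposing $K(f_\eps(w_\delta)) - w_\delta = R_{d,\eps} + \Pi_\eps(K(f_\eps(w_\delta)) - w_\delta)$ and using that $\phi_{d,\eps}$ is orthogonal to the second summand, the right-hand side collapses to $-\langle R_{d,\eps}, \phi_{d,\eps}\rangle$. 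Combining Cauchy--Schwarz with Proposition \ref{prop:estimate_remainderR} and \eqref{phi:bd} yields
\[
|F_\eps'(w_\delta)[\phi_{d,\eps}]| \leq \|R_{d,\eps}\|_{H_\eps}\, \|\phi_{d,\eps}\|_{H_\eps} = O(\eps^{1-\gamma})\cdot o(\eps^{1-\gamma}) = o(\eps^{2-2\gamma}),
\]
which is $o(\eps)$ once $\gamma \in (0,1/2)$ is fixed.

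For the quadratic term I compute
\[
F_\eps''(u)[\phi, \phi] = \|\phi\|^2 - (p+\eps) \int_B |u|^{p-1+\eps}\, \phi^2,
\]
and control the nonlinear integral by H\"older's inequality with exponents $s_\eps/(p-1+\eps)$ and $s_\eps/2$, which are conjugate since $(p-1+\eps)+2 = s_\eps$, obtaining
\[
\int_B |u|^{p-1+\eps}\, \phi^2 \leq |u|_{s_\eps}^{p-1+\eps}\, |\phi|_{s_\eps}^2.
\]
At $u = w_\delta + t\phi_{d,\eps}$, the bound $|w_\delta|_{s_\eps} = O(1)$ uniformly in $d \in (\eta,1/\eta)$ follows from Corollary \ref{cor:estimates_with_bubbles} together with Lemma \ref{lemma:LpnormBubble} (exactly as exploited in Lemma \ref{lemma:projection_continuous}), while $|\phi_{d,\eps}|_{s_\eps} = o(1)$. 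Hence $|u|_{s_\eps}^{p-1+\eps} = O(1)$ and the entire quadratic contribution is $O(\|\phi_{d,\eps}\|_{H_\eps}^2) = o(\eps^{2-2\gamma}) = o(\eps)$, again for $\gamma < 1/2$.

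The only real obstacle is ensuring all the constants and bounds are uniform in $d \in (\eta, 1/\eta)$ while the exponent $s_\eps$ itself varies with $\eps$; this is precisely the uniformity already built into the $L^{s_\eps}$-type estimates of Section \ref{Sec:prel}, so no fundamentally new ingredient is required beyond the orthogonality trick that converts $F_\eps'(w_\delta)[\phi_{d,\eps}]$ into a scalar product with $R_{d,\eps}$.
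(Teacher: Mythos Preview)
Your argument is correct. The overall strategy---Taylor expand $F_\eps$ at $w_\delta$ and show the first- and second-order contributions are $o(\eps)$---is the same as the paper's, but your treatment of the linear term is a genuine streamlining. The paper writes out
\[
F_\eps'(w_\delta)[\phi_{d,\eps}]=\int_B\bigl(U_{\delta,e_n}^p-U_{\delta,-e_n}^p-f_\eps(w_\delta)\bigr)\phi_{d,\eps}
\]
and bounds this by H\"older together with the appendix estimates (Lemmas~\ref{Lemma:I_1aux} and~\ref{lemma:|i*(f_eps(w_delta)-f_0(w_delta))|_aux}) on $|f_0(U_{\delta,e_n})-f_0(U_{\delta,-e_n})-f_\eps(w_\delta)|_{(p+1)/p}$. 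You instead recognise that this quantity is exactly $-\langle K(f_\eps(w_\delta))-w_\delta,\phi_{d,\eps}\rangle$, kill the $\Theta_{d,\eps}$-component via orthogonality, and land directly on $-\langle R_{d,\eps},\phi_{d,\eps}\rangle$, so that Proposition~\ref{prop:estimate_remainderR} does all the work. This is cleaner because it reuses the structural bound on $R_{d,\eps}$ already proved in Section~\ref{sec:3.2} rather than unpacking it again; the paper's route is essentially re-deriving that same estimate inline. For the quadratic remainder the two arguments are equivalent: the paper uses a pointwise mean-value step and then H\"older with exponents $\tfrac{n}{2}$ and $2^*$ (plus a separate $|\phi|_{s_\eps}^{s_\eps}$ term), whereas your single H\"older pairing with exponents $s_\eps/(p-1+\eps)$ and $s_\eps/2$ handles both pieces at once.
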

\begin{proof}
We argue as in \cite[Lemma 6.1]{MP10} to show that
\begin{align*}
F_\eps(PW_\delta+\phi_{d,\eps})-F_\eps(PW_\delta)=o(\eps)\qquad \text{ as }\eps\to 0.
\end{align*} 
Note that, by \eqref{phi:bd},
\begin{align*}
F_\eps(PW_\delta+\phi_{d,\eps})-F_\eps(PW_\delta)
=&\frac{1}{2}\|\phi_{d,\eps}\|^2+\int_{{B}}(U_{\delta,e_n}^p-U_{\delta,e_n}^p)\phi_{d,\eps}
\\
&-\frac{1}{p+1+\eps}\int_{{B}}|PW_\delta+\phi_{d,\eps}|^{p+1+\eps}-|PW_\delta|^{p+1+\eps}\\
&=o(\eps)+\int_{{B}}(U_{\delta,e_n}^p-U_{\delta,e_n}^p-|PW_\delta|^{p-1+\eps})PW_\delta\phi_{d,\eps}\\
&-\int_{{B}}\frac{|PW_\delta+\phi|^{p+1+\eps}}{p+1+\eps}-\frac{|PW_\delta|^{p+1+\eps}}{p+1+\eps}-|PW_\delta|^{p-1+\eps}PW_\delta\phi_{d,\eps}.
\end{align*}
Moreover, by \eqref{phi:bd} and Lemmas \ref{Lemma:I_1aux} and \ref{lemma:|i*(f_eps(w_delta)-f_0(w_delta))|_aux} with $\gamma=0$,
\begin{align*}
\left|\int_{{B}}(U_{\delta,e_n}^p-U_{\delta,e_n}^p-|PW_\delta|^{p+\eps})\phi_{d,\eps}\right|
\leq |U_{\delta,e_n}^p-U_{\delta,e_n}^p-|W_\delta|^{p+\eps}|_{\frac{2n}{n+2}}|\phi_{d,\eps}|_{2^*}=o(\eps).
\end{align*}
On the other hand, by the mean value theorem, there is $t=t(x,\eps)\in[0,1]$ such that
\begin{align*}
&\left|\int_{{B}}\frac{|PW_\delta+\phi|^{p+1+\eps}}{p+1+\eps}-\frac{|PW_\delta|^{p+1+\eps}}{p+1+\eps}-|PW_\delta|^{p-1+\eps}PW_\delta\phi_{d,\eps}\right|
\leq C\int_{B} |PW_\delta+t\phi_{d,\eps}|^{p-1+\eps}\phi^2\\
&\qquad\leq C'(\left||PW_\delta|^{p-1+\eps}|_{\frac{n}{2}}\right|\phi_{d,\eps}|^2_{2^*}+|\phi_{d,\eps}|_{s_\eps}^{s_\eps})=o(\eps),
\end{align*}
where we used again \eqref{phi:bd} and that $\left||PW_\delta|^{p-1+\eps}\right|_{\frac{n}{2}}=O(1)$ (by Corollary \ref{cor:estimates_with_bubbles} and Lemma \ref{lemma:LpnormBubble}).
\end{proof}

In the rest of the section we expand the leading term
\begin{align*}
 F_\eps(PW_\delta)
 =\int_{B} \frac{|\nabla PW_\delta|^2}{2} - \frac{|PW_\delta|^{p+1+\eps}}{p+1+\eps}\, dx,
\end{align*}
 We compute separately the expansions for
\begin{align*}
 \int_{B} \frac{|\nabla PW_\delta|^2}{2}\qquad\text{ and }\qquad 
 \int_{B}\frac{|PW_\delta|^{p+1+\eps}}{p+1+\eps}
 \end{align*}
in Subsections \ref{subsec:gradient} and \ref{subsec:nonlinear_term} respectively. We perform them in the $\delta$ variable, recalling that $\delta=d\eps$ for some $d>0$.

Recall also that $W_{\delta}:=U_{\delta,e_n}-U_{\delta,-e_n}$ and, by Lemma \ref{exp:lem},
 \begin{align*}
  PW_{\delta}(x)&=W_{\delta}(x)-\delta^{-\frac{n-4}{2}}\Big(\varphi_0\Big(\frac{e_n-x}{\delta}\Big)-\varphi_0\Big(\frac{e_n+x}{\delta}\Big)\Big)+\zeta_\delta(x),\qquad \zeta_\delta(x)=O(\delta^{\frac{6-n}{2}}(\log \delta)^\tau),
\end{align*}
where $\tau=0$ if $n\geq 5$, and $\tau=1$ if $n=4$.

\subsection{Expansion of the gradient term} \label{subsec:gradient}

Note that
\begin{align}
\int_{B} |\nabla P W_{\delta}|^2
&=  \int_{B} P W_{\delta}(-\Delta )P W_{\delta}
=  \int_{B} P W_{\delta}(U_{\delta,e_n}^p-U_{\delta,-e_n}^p)\notag\\
&=  \int_{B} \left(W_{\delta}(x)-\delta^{-\frac{n-4}{2}}\Big(\varphi_0\Big(\frac{e_n-x}{\delta}\Big)-\varphi_0\Big(\frac{e_n+x}{\delta}\Big)\Big)+\zeta_\delta(x)\right)(U_{\delta,e_n}^p-U_{\delta,-e_n}^p)\notag\\
&=  I_1-I_2+I_3,\label{grad:term}
\end{align}
where
\begin{align*}
I_1&:=\int_{B} W_{\delta}(x)(U_{\delta,e_n}^p-U_{\delta,-e_n}^p),\\
I_2&:= \int_{B} \delta^{-\frac{n-4}{2}}\Big(\varphi_0\Big(\frac{e_n-x}{\delta}\Big)-\varphi_0\Big(\frac{e_n+x}{\delta}\Big)\Big)(U_{\delta,e_n}^p-U_{\delta,-e_n}^p),\\
I_3&:=\int_{B} \zeta_\delta(x)(U_{\delta,e_n}^p-U_{\delta,-e_n}^p).
\end{align*}

\begin{prop}\label{prop:exp:1} We have the expansion
\begin{align*}
\int_{B} |\nabla P W_{\delta}|^2
&=I_1-I_2+I_3
=\mathfrak{A}+(-\mathfrak{B}+(n-2)\mathfrak{C})\delta+o(\delta),\qquad \text{ as }\delta\to 0,
\end{align*}
where $\mathfrak{A}$, $\mathfrak{B}$, and $\mathfrak{C}$ are given in \eqref{fraks}.
\end{prop}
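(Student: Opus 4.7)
The plan is to analyze each of $I_1$, $I_2$, $I_3$ separately, perform the change of variables $y=(x\mp e_n)/\delta$ to pass from the ball to (a perturbation of) the half-space, and collect the contributions up to $o(\delta)$.

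\medskip

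\textbf{Estimate of $I_1$.} First I expand the product $W_\delta\cdot(U_{\delta,e_n}^p-U_{\delta,-e_n}^p)$; using that both $W_\delta$ and $U_{\delta,e_n}^p-U_{\delta,-e_n}^p$ are odd in $x_n$ while the ball is $x_n$--symmetric, I reduce to
\[ I_1 = 2\int_B U_{\delta,e_n}^{p+1}\,dx - 2\int_B U_{\delta,-e_n} U_{\delta,e_n}^p\,dx. \]
The cross term is $O(\delta^{n-2})=o(\delta)$ since the two bubbles concentrate at distance $2$. For the main term I rescale $y=(x-e_n)/\delta$ and use $U_{\delta,e_n}^{p+1}\,dx = U_{1,0}^{p+1}(y)\,dy$. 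A direct computation gives the rescaled domain $\{y_n<-\delta|y|^2/2+O(\delta^2)\}$, and
\[ \int_B U_{\delta,e_n}^{p+1}\,dx = \int_{\R^n_-}U_{1,0}^{p+1}\,dy - \int_{\{-\delta|y|^2/2<y_n<0\}}U_{1,0}^{p+1}\,dy + o(\delta). \]
Using Fubini in the thin slab and Taylor expansion at $y_n=0$, the second integral equals $\tfrac{\delta}{2}\int_{\R^{n-1}}U_{1,0}(y',0)^{p+1}|y'|^2\,dy' + o(\delta) = \tfrac{\delta}{2}\mathfrak{B}+o(\delta)$. Since $\int_{\R^n_-}U_{1,0}^{p+1}=\tfrac{1}{2}\mathfrak{A}$, I conclude $I_1 = \mathfrak{A}-\mathfrak{B}\,\delta + o(\delta)$.

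\medskip

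\textbf{Estimate of $I_2$.} After splitting $I_2$ into four crossed terms, I argue that only the two "diagonal" terms (pairing $\varphi_0((e_n-x)/\delta)$ with $U_{\delta,e_n}^p$ and $\varphi_0((e_n+x)/\delta)$ with $U_{\delta,-e_n}^p$) contribute at order $\delta$. The "off-diagonal" terms are $o(\delta)$: on the support of the localised bubble the rescaled argument of $\varphi_0$ stays of order $1/\delta$, and the decay estimate \eqref{A6} gives $|\varphi_0(\cdot/\delta)|=O(\delta^{n-3})$ there. By the $x_n$--symmetry the two diagonal contributions are equal; for the first, the rescaling $y=(e_n-x)/\delta$ turns the integrand into $\delta\,\varphi_0(y)U_{1,0}(y)^p\,dy$ over the rescaled domain $\{y_n>\delta|y|^2/2\}\to \R^n_+$, giving $\delta\int_{\R^n_+}\varphi_0 U_{1,0}^p + o(\delta)$. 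The integral $\int_{\R^n_+}\varphi_0 U_{1,0}^p$ is evaluated via a double integration by parts: since $\Delta\varphi_0=0$ in $\R^n_+$, $U_{1,0}$ is radial so $\partial_n U_{1,0}=0$ on $\partial\R^n_+$, and $U_{1,0}^p=-\Delta U_{1,0}$, the interior terms cancel and only the boundary term involving $\partial_n\varphi_0$ survives. Plugging in the explicit Neumann data \eqref{phieq} for $\varphi_0$ and the explicit form of $U_{1,0}$ on $\partial\R^n_+$ produces a boundary integral proportional to $\mathfrak{C}$, yielding $-I_2 = (n-2)\mathfrak{C}\,\delta + o(\delta)$ (including the factor $2$ from the symmetric half).

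\medskip

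\textbf{Estimate of $I_3$ and conclusion.} Using the pointwise bound on $\zeta_\delta$ from Lemma~\ref{exp:lem}, on the region near $e_n$ I rescale $y=(x-e_n)/\delta$ and estimate
\[ |\zeta_\delta|\,U_{\delta,e_n}^p\,dx \leq C\,\frac{\delta^{(6-n)/2}}{(1+|y|)^{n-3}}\cdot \delta^{(n-2)/2}U_{1,0}(y)^p\,dy = O(\delta^2)\,\frac{U_{1,0}(y)^p}{(1+|y|)^{n-3}}\,dy, \]
which integrates to $O(\delta^2)$ (to $O(\delta^2|\log\delta|)$ for $n=4$), thus $o(\delta)$. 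Contributions away from $\pm e_n$ are even smaller. Summing the three expansions gives the claim. The main technical obstacle is controlling the passage from the slightly curved rescaled domains $\{y_n\gtrless \pm\delta|y|^2/2\}$ to the half-spaces with $o(\delta)$ error, and pinning down the exact constant $(n-2)\mathfrak{C}$ by carefully bookkeeping the boundary integration-by-parts; the latter is where sign and normalisation errors are easiest to commit.
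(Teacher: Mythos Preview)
Your proposal is correct and follows essentially the same strategy as the paper: you treat $I_1$, $I_2$, $I_3$ separately, rescale near $\pm e_n$, extract the boundary correction from the curved-to-flat discrepancy, and evaluate the $\varphi_0$-term by integrating by parts against $-\Delta U_{1,0}$. The paper packages $I_1$ via Lemmas~\ref{AB:lem} and~\ref{inter:lem}, $I_2$ via Lemmas~\ref{I2:1} and~\ref{I2:2}, and $I_3$ via the uniform bound $\zeta_\delta=O(\delta^{(6-n)/2})$, but the substance is identical. One minor imprecision: in your $I_3$ estimate you write $|\zeta_\delta|\le C\delta^{(6-n)/2}/(1+|y|)^{n-3}$, which mixes the uniform bound (power $\delta^{(6-n)/2}$) with the decay from the pointwise bound \eqref{eq:estimate_PW-W} (which carries only $\delta^{(4-n)/2}$); the uniform bound $|\zeta_\delta|\le C\delta^{(6-n)/2}$ alone already gives $I_3=O(\delta^2)$ since $\int U_{1,0}^p<\infty$, so this does not affect your conclusion.
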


The proof of this proposition follows directly from the following three lemmas.

\begin{lemma}We have
\begin{align*}
I_1=\int_{B} W_{\delta}(x)(U_{\delta,e_n}^p-U_{\delta,-e_n}^p)
  = 
  \mathfrak{A}-\mathfrak{B}\delta+o(\delta),\qquad \text{ as }\delta\to 0,
   \end{align*}
   where $\mathfrak{A}$ and $\mathfrak{B}$ are given in \eqref{fraks}.
\end{lemma}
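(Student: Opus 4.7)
The reflection $x\mapsto(x_1,\dots,x_{n-1},-x_n)$ preserves $B$ and interchanges $U_{\delta,e_n}$ with $U_{\delta,-e_n}$. Expanding the product and using this symmetry, I will rewrite
\[
I_1 = 2\int_B U_{\delta,e_n}^{p+1}\,dx \;-\; 2\int_B U_{\delta,e_n}\,U_{\delta,-e_n}^p\,dx.
\]
The cross (interaction) term will be absorbed in the $o(\delta)$. Splitting $B=\omega^+\cup\omega^-\cup(B\setminus(\omega^+\cup\omega^-))$ as in \eqref{omegas}: on $\omega^+$ the far bubble satisfies $U_{\delta,-e_n}\leq C\delta^{(n-2)/2}$ uniformly, while the rescaling $y=(x-e_n)/\delta$ yields $\int_{\omega^+}U_{\delta,e_n}\,dx=O(\delta^{(n-2)/2})$ (using that $\int_{|y|<R}dy/(1+|y|^2)^{(n-2)/2}\sim R^2$); the situation on $\omega^-$ is symmetric, and on the complement both bubbles are $O(\delta^{(n-2)/2})$. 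Overall the cross term is $O(\delta^{n-2})=o(\delta)$ for $n\geq 4$.

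For the main term, the change of variables $y=(x-e_n)/\delta$ gives
\[
\int_B U_{\delta,e_n}^{p+1}\,dx = \int_{B_\delta} U_{1,0}^{p+1}(y)\,dy, \qquad B_\delta:=\{y\in\R^n:\,2y_n+\delta|y|^2<0\}.
\]
The key geometric observation is $B_\delta\subset\R^n_-$, together with the explicit description
\[
\R^n_-\setminus B_\delta=\bigl\{y:\,-\tfrac{\delta}{2}|y|^2\leq y_n<0\bigr\},
\]
a thin strip hugging the hyperplane $\{y_n=0\}$. Radial symmetry of $U_{1,0}$ gives $\int_{\R^n_-}U_{1,0}^{p+1}=\mathfrak A/2$, so the task reduces to showing
\[
\frac{1}{\delta}\int_{\R^n_-\setminus B_\delta} U_{1,0}^{p+1}\,dy \;\xrightarrow[\delta\to 0]{}\; \frac{1}{2}\int_{\R^{n-1}}|y'|^2\,U_{1,0}(y',0)^{p+1}\,dy' = \frac{\mathfrak B}{2}.
\]
Writing the strip via Fubini as $\int_{\R^{n-1}}\bigl(\int_{h_\delta(y')}^{0}U_{1,0}^{p+1}(y',y_n)\,dy_n\bigr)dy'$ with $h_\delta(y')=(-1+\sqrt{1-\delta^2|y'|^2})/\delta\sim -\tfrac{\delta}{2}|y'|^2$, continuity of $U_{1,0}^{p+1}$ at $y_n=0$ yields the pointwise limit $\tfrac12|y'|^2U_{1,0}(y',0)^{p+1}$. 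Combining these steps and doubling, one gets $2\int_B U_{\delta,e_n}^{p+1}\,dx=\mathfrak A-\mathfrak B\delta+o(\delta)$, which together with the cross-term bound gives the lemma.

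The main technical point is the uniform-in-$\delta$ passage to the limit in the strip, since $y'$ ranges up to order $1/\delta$. I will handle this by splitting the $y'$-integration into $\{|y'|\leq R\}$ and $\{|y'|>R\}$: on the first piece dominated convergence applies with the integrable majorant $C|y'|^2/(1+|y'|^2)^n$ (integrable on $\R^{n-1}$ for $n\geq 4$); on the second, one uses the asymptotic $U_{1,0}^{p+1}(y',0)\sim|y'|^{-2n}$ and the fact that the strip width is at most $\tfrac{\delta}{2}|y'|^2$ (and the slice fails to meet $B_\delta$ at all for $|y'|>1/\delta$, contributing only $O(\delta^n)$) to conclude that the tail contributes $O(R^{1-n})$ after division by $\delta$, uniformly in $\delta$, which vanishes as $R\to\infty$. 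Everything else is routine bookkeeping.
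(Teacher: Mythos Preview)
Your proof is correct and follows the same overall architecture as the paper's: expand the product using the $x_n$-reflection symmetry, bound the interaction term $\int_B U_{\delta,e_n}U_{\delta,-e_n}^p$ by $O(\delta^{n-2})$ (this is exactly the paper's Lemma~\ref{inter:lem}), and show $\int_B U_{\delta,\pm e_n}^{p+1}=\tfrac{\mathfrak A}{2}-\tfrac{\mathfrak B}{2}\delta+o(\delta)$ (the paper's Lemma~\ref{AB:lem}).

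The only genuine difference is in how the last expansion is obtained. The paper (following Adimurthi--Mancini) works in the original variables: it writes $\int_{\omega_+}U_{\delta,e_n}^{p+1}=\tfrac12\int_{B_{1/2}(e_n)}U_{\delta,e_n}^{p+1}-\int_\Sigma U_{\delta,e_n}^{p+1}$ with $\Sigma$ the thin cap between $\partial B$ and the hyperplane $\{x_n=1\}$, then applies the change of variable $y_n=(1-x_n)/\sqrt{\delta^2+|x'|^2}$ on $\Sigma$ to extract the coefficient $\mathfrak B/2$. You instead rescale the full ball at once to $B_\delta=B_{1/\delta}(-e_n/\delta)\subset\R^n_-$ and read off $\mathfrak B/2$ from the thin strip $\R^n_-\setminus B_\delta$ via Fubini and dominated convergence, using the pointwise majorant $|y'|^2 U_{1,0}^{p+1}(y',0)$. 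Your route is slightly more direct (no auxiliary region $\Sigma$, no clever $y_n$-substitution) and makes the geometric mechanism---a boundary layer of thickness $\sim\delta|y'|^2/2$ in the rescaled picture---more transparent; the paper's version has the advantage of being already written out for a general smooth domain (Remark~\ref{rmk:generaldomain}), where the curvature enters through the local graph $\rho(x')$. Two small points worth tightening in a final write-up: (i) your phrase ``the situation on $\omega^-$ is symmetric'' for the cross term is not quite accurate, since on $\omega^-$ it is $U_{\delta,e_n}$ (power~$1$) that is far and $U_{\delta,-e_n}^p$ that is near, giving $O(\delta^{(n-2)/2})\cdot O(\delta^{(n-2)/2})=O(\delta^{n-2})$, which is the dominant piece; and (ii) for $|y'|<1/\delta$ the slice of $\R^n_-\setminus B_\delta$ also contains the ``far side'' $y_n\le -\tfrac{1}{\delta}-\sqrt{\delta^{-2}-|y'|^2}$, but this lies in $\{|y|\ge 1/\delta\}$ and is absorbed in your $O(\delta^n)$ tail bound.
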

\begin{proof}
Note that
\begin{align*}
  I_1=\int_{B} W_{\delta}(x)(U_{\delta,e_n}^p-U_{\delta,-e_n}^p)
  = \int_{B} U_{\delta,e_n}^{p+1}-U_{\delta,e_n}U_{\delta,-e_n}^p
  -U_{\delta,-e_n}U_{\delta,e_n}^p
  +U_{\delta,-e_n}^{p+1}.
\end{align*}
The result follows from Lemmas \ref{AB:lem} and \ref{inter:lem}.
\end{proof}

\begin{lemma} We have
\begin{align*}
I_2&=-(n-2)\mathfrak{C}\,\delta+o(\delta),
\quad \text{ as }\delta\to 0,
\end{align*}
where $\mathfrak{C}$ is given in \eqref{fraks}.
\end{lemma}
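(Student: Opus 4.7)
My plan is to rescale around the concentration point $e_n$ so that the domain $B$ converges to the half-space $\R^n_+$ on which $\varphi_0$ is naturally defined, reduce the integral to a single manageable limit, and then identify it via Green's identity against $\varphi_0$.

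I would begin by exploiting the $x\mapsto -x$ symmetry of $B$ to collapse the four cross terms in $I_2$. Since $U_{\delta,e_n}(-x) = U_{\delta,-e_n}(x)$ and $\varphi_0$ is even in its first $n-1$ coordinates (a consequence of the uniqueness of the solution to \eqref{phieq} and the evenness in $x'$ of the Neumann data), the four integrals pair up to give
\begin{align*}
I_2 = 2\delta^{-\frac{n-4}{2}} \int_B \varphi_0\!\left(\frac{e_n-x}{\delta}\right)\bigl(U_{\delta,e_n}^p(x) - U_{\delta,-e_n}^p(x)\bigr)\,dx.
\end{align*}
The contribution of $U_{\delta,-e_n}^p$ is negligible: using the bound $U_{\delta,-e_n}^p(x) \leq C\delta^{(n+2)/2}$ away from $-e_n$ together with the decay $\varphi_0(y) = O(|y|^{3-n})$ encoded in Lemma \ref{exp:lem}, one gets $\int_B |\varphi_0((e_n-x)/\delta)|\,dx = O(\delta^{n-3})$, making that piece $O(\delta^{n}) = o(\delta)$.

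Next I would substitute $y = (e_n-x)/\delta$. The domain $\Omega_\delta = (e_n-B)/\delta = B_{1/\delta}(e_n/\delta)$ sits inside $\R^n_+$ and increases monotonically to $\R^n_+$ as $\delta \to 0$, and $U_{\delta,e_n}^p(e_n-\delta y) = \delta^{-(n+2)/2}U_{1,0}^p(y)$. Counting powers of $\delta$ yields exactly one factor of $\delta$ out front, and dominated convergence (justified by the pointwise estimate $\varphi_0(y)\,U_{1,0}^p(y) = O(|y|^{-(2n-1)})$ at infinity) gives
\begin{align*}
I_2 = 2\delta \int_{\R^n_+} \varphi_0(y)\, U_{1,0}^p(y)\,dy + o(\delta).
\end{align*}

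Finally, I would compute this half-space integral by Green's identity applied to the harmonic function $\varphi_0$ and to $U_{1,0}$, which solves $-\Delta U_{1,0} = U_{1,0}^p$. A direct check gives $\partial_{y_n}U_{1,0}(y',0) = 0$, so integrating by parts twice and using the Neumann data for $\varphi_0$ from \eqref{phieq} produces
\begin{align*}
\int_{\R^n_+}\varphi_0\,U_{1,0}^p\,dy = -\int_{\R^{n-1}} U_{1,0}(y',0)\,\partial_{y_n}\varphi_0(y',0)\,dy',
\end{align*}
the sign coming from the outward normal $-e_n$ to $\R^n_+$. After plugging in $U_{1,0}|_{y_n=0}$ and the Neumann data, the right-hand side is, up to the universal constants, precisely the integral defining $\mathfrak{C}$, giving the claimed asymptotics $I_2 = -(n-2)\mathfrak{C}\,\delta + o(\delta)$.

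The main obstacle is the dominated convergence step in the borderline dimension $n=4$: there $\varphi_0$ has only weak decay at infinity, so one must lean on the much faster decay of $U_{1,0}^p$ (of order $|y|^{-(n+2)}$) to secure integrability on all of $\R^n_+$. Some care is also required to promote the $O(\delta^{n-3})$ bound for $\int|\varphi_0|$ into the claimed $o(\delta)$ error uniformly in $d\in(\eta,1/\eta)$, and to book-keep the sign coming from the outward normal in Green's identity.
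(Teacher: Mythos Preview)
Your proof is correct and follows essentially the same route as the paper: reduce by the $x\mapsto -x$ symmetry of $B$ (the paper does this implicitly via the equalities stated in Lemmas~\ref{I2:1} and~\ref{I2:2}), discard the cross term as lower order, rescale around $e_n$ so that the domain tends to $\R^n_+$, and identify the limiting integral via Green's identity against the harmonic function $\varphi_0$.

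Two small remarks. First, the evenness of $\varphi_0$ in $x'$ is not needed for the pairing: under $x\mapsto -x$ one has $\varphi_0((e_n-x)/\delta)\mapsto\varphi_0((e_n+x)/\delta)$ and $U_{\delta,e_n}\leftrightarrow U_{\delta,-e_n}$ directly from the definitions, which already gives $I_2=2\delta^{-(n-4)/2}\int_B\varphi_0((e_n-x)/\delta)(U_{\delta,e_n}^p-U_{\delta,-e_n}^p)$. Second, your bound $O(\delta^n)$ on the cross term is too optimistic: it only covers the region $\{|x+e_n|\geq\tfrac12\}$, where indeed $U_{\delta,-e_n}^p\leq C\delta^{(n+2)/2}$. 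Near $-e_n$ one must instead use $|\varphi_0((e_n-x)/\delta)|=O(\delta^{n-3})$ (since $|e_n-x|\geq\tfrac32$ there) together with $\int_{\omega_-}U_{\delta,-e_n}^p=O(\delta^{(n-2)/2})$, which contributes $O(\delta^{n-2})$ --- exactly the bound in the paper's Lemma~\ref{I2:1}. This is still $o(\delta)$ for $n\geq4$, so the conclusion is unaffected.
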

\begin{proof}
Note that
\begin{align*}
    I_2
    = \delta^{-\frac{n-4}{2}}\int_{B}
    \varphi_0\Big(\frac{e_n-x}{\delta}\Big)U_{\delta,e_n}^p
    -\varphi_0\Big(\frac{e_n+x}{\delta}\Big)U_{\delta,e_n}^p
    -\varphi_0\Big(\frac{e_n-x}{\delta}\Big)U_{\delta,-e_n}^p
    +\varphi_0\Big(\frac{e_n+x}{\delta}\Big)U_{\delta,-e_n}^p.
\end{align*}
Therefore, the result follows from Lemmas \ref{I2:1} and \ref{I2:2} and the fact that $n\geq 4$.
\end{proof}

\begin{lemma}
Let $\tau=0$ if $n\geq 5$, and $\tau=1$ if $n=4$. We have
\begin{align*}
    I_3:=\int_{B} \zeta_\delta(x)(U_{\delta,e_n}^p-U_{\delta,-e_n}^p)=O(\delta^2(\log \delta)^\tau)=o(\delta) \qquad \text{as $\delta\to 0$}.
\end{align*}

\end{lemma}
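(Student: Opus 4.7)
The plan is to combine the uniform $L^\infty$-bound on $\zeta_\delta$ provided by Lemma \ref{exp:lem} with the standard $L^1$-scaling estimate for the critical-bubble power $U_{\delta,\pm e_n}^p$. The key observation is that, although the pointwise bound \eqref{eq:estimate_PW-W} is too loose near the peaks to give $O(\delta^2)$ directly (a direct change of variables only yields $O(\delta)$), the sharper uniform bound
\[
\|\zeta_\delta\|_{L^\infty(B)}=O(\delta^{(6-n)/2})\quad (n\geq 5),\qquad \|\zeta_\delta\|_{L^\infty(B)}=O(\delta|\log\delta|)\quad (n=4),
\]
also contained in Lemma \ref{exp:lem}, is precisely sharp enough to do the job.

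The steps I would take are the following. First, I would factor out $\zeta_\delta$ using the trivial estimate
\[
|I_3|\leq \|\zeta_\delta\|_{L^\infty(B)}\int_B \bigl(U_{\delta,e_n}^p+U_{\delta,-e_n}^p\bigr)\,dx.
\]
Second, by the change of variables $y=(x\mp e_n)/\delta$ and an elementary computation,
\[
\int_B U_{\delta,\pm e_n}^p\,dx\leq \alpha_n^p\,\delta^{(n-2)/2}\int_{\mathbb R^n}\frac{dy}{(1+|y|^2)^{(n+2)/2}}=O(\delta^{(n-2)/2}).
\]
Third, I would combine the two estimates: using the convenient algebraic identity $\frac{6-n}{2}+\frac{n-2}{2}=2$, one gets in both regimes
\[
|I_3|=O\bigl(\delta^{(6-n)/2}(\log\delta)^\tau\bigr)\cdot O\bigl(\delta^{(n-2)/2}\bigr)=O\bigl(\delta^2(\log\delta)^\tau\bigr),
\]
with $\tau=0$ for $n\geq 5$ and $\tau=1$ for $n=4$. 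Finally, since $\delta\log\delta\to 0$ as $\delta\to 0$, this bound is $o(\delta)$, as claimed.

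There is no substantial obstacle in this proof: everything reduces to the correct bookkeeping of scaling exponents. The only slightly delicate point worth noting is that the crude pointwise bound \eqref{eq:estimate_PW-W} is insufficient on its own (it would leave us with $O(\delta)$, of the same order as the main term one is trying to separate from); one really needs the improved uniform estimate from Lemma \ref{exp:lem}, which encodes an additional factor of $\delta$ coming from the cancellations built into the definition of $\zeta_\delta$ as a second-order remainder in the expansion of $PW_\delta$.
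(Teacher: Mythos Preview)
Your proof is correct and follows essentially the same approach as the paper: both use the uniform bound $\|\zeta_\delta\|_{L^\infty(B)}=O(\delta^{(6-n)/2}(\log\delta)^\tau)$ from Lemma~\ref{exp:lem} together with the scaling estimate $\int_B U_{\delta,\pm e_n}^p=O(\delta^{(n-2)/2})$. The paper additionally splits $B$ into $\omega_+$ and its complement, but this is unnecessary since, as you observe, the global change of variables already gives the needed $O(\delta^{(n-2)/2})$ bound on $\int_B U_{\delta,\pm e_n}^p$.
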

\begin{proof}
Let $D:=\omega_+-e_n$  and observe that, since $\zeta_\delta=O(\delta^\frac{6-n}{2}(\log \delta)^\tau)$ uniformly in ${B}$ as $\delta\to 0$ (by Lemma \ref{exp:lem}),
\begin{align}
\int_{\omega_+} |\zeta_\delta(x)|U_{\delta,e_n}^p
&\leq C\alpha_n^p\delta^\frac{6-n}{2}(\log \delta)^\tau\int_{\omega_+}
\frac{\delta^{\frac{n+2}{2}}}{(\delta^2+|x-e_n|^2)^{\frac{n+2}{2}}}\notag\\
&= C\alpha_n^p\delta^\frac{6-n}{2}(\log \delta)^\tau\int_{\delta^{-1}D}
\frac{\delta^{\frac{n-2}{2}}}{(1+|x|^2)^{\frac{n+2}{2}}}
=C\alpha_n^p\delta^{2}(\log \delta)^\tau\int_{\delta^{-1}D}
\frac{1}{(1+|x|^2)^{\frac{n+2}{2}}}\notag\\
&\leq C\alpha_n^p\delta^{2}(\log \delta)^\tau\omega_n\left(1+
\int_{1}^\frac{1}{2\delta}r^{-3}\ dr
\right)=O(\delta^2(\log \delta)^\tau).\label{3}
\end{align}
On the other hand,
\begin{align}
\int_{{B}\backslash \omega_+} |\zeta_\delta(x)|U_{\delta,e_n}^p
&\leq C\alpha_n^p\delta^\frac{6-n}{2} (\log \delta)^\tau\int_{{B}\backslash \omega_+}
(4\delta)^{\frac{n+2}{2}}=O(\delta^4(\log \delta)^\tau).\label{4}
\end{align}
The claim follows from \eqref{3} and \eqref{4} and from the fact that 
\[
\int_{B} \zeta_\delta(x)U_{\delta,e_n}^p
=\int_{B} \zeta_\delta(x)U_{\delta,-e_n}^p.  \qedhere
\]
\end{proof}

\subsection{The nonlinear term} \label{subsec:nonlinear_term}

Next, we focus on an expansion for the nonlinear term 
\begin{align*}
 \frac{1}{p+1+\eps}\int_{B}|PW_\delta|^{p+1+\eps},
\end{align*}
where we recall we are using  the Ansatz $\delta=d\eps$ for some $d>0$. 
\begin{prop}\label{prop:exp:2}
Given $\eta\in (0,1)$, we have the expansion
\begin{align*}
&\frac{1}{p+1+\varepsilon}\int_{{B}}|PW_\delta|^{p+1+\eps}\\
&=\frac{n-2}{2n}(\mathfrak{A}-
\mathfrak{B}\,d\eps+2n \mathfrak{C}\, d\eps
)
+\left(
-\frac{(n-2)^2}{4n}\mathfrak{A}\, \eps \log|d\eps|
+\frac{n-2}{2n}\mathfrak{D}\,\eps
-\frac{(n-2)^2}{4n^2}\eps\mathfrak{A}
\right)
+o(\eps),
\end{align*}
as $\eps\to 0^+$ uniformly in $d\in (\eta,1/\eta)$.
\end{prop}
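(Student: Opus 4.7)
My plan is to reduce to two purely $\delta$-asymptotic computations by expanding the integrand in $\eps$ around $\eps=0$. From the pointwise identity $|t|^{p+1+\eps}=|t|^{p+1}(1+\eps\log|t|+O(\eps^{2}\log^{2}|t|))$ together with $\tfrac{1}{p+1+\eps}=\tfrac{1}{p+1}-\tfrac{\eps}{(p+1)^{2}}+O(\eps^{2})$, I obtain
\begin{align*}
\tfrac{1}{p+1+\eps}\int_{B}|PW_\delta|^{p+1+\eps}=\tfrac{1}{p+1}\,\mathcal{I}(\delta)+\eps\Bigl[\tfrac{1}{p+1}\mathcal{J}(\delta)-\tfrac{1}{(p+1)^{2}}\mathcal{I}(\delta)\Bigr]+o(\eps),
\end{align*}
where $\mathcal{I}(\delta):=\int_{B}|PW_\delta|^{p+1}$ and $\mathcal{J}(\delta):=\int_{B}|PW_\delta|^{p+1}\log|PW_\delta|$. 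To justify the error, I dominate $|PW_\delta|^{p+1}\log^{2}|PW_\delta|$ using Corollary \ref{cor:estimates_with_bubbles} by $C(U_{\delta,e_{n}}+U_{\delta,-e_{n}})^{p+1}\log^{2}(U_{\delta,e_{n}}+U_{\delta,-e_{n}})$, whose integral over $B$ is $O(\log^{2}\eps)$, so the $\eps^{2}$ piece is $O(\eps^{2}\log^{2}\eps)=o(\eps)$. The task thus reduces to computing $\mathcal{I}(\delta)$ up to order $\delta$ and $\mathcal{J}(\delta)$ up to order $o(1)$.

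To compute $\mathcal{I}(\delta)$, I split $B=\omega_{+}\cup\omega_{-}\cup B_{\mathrm{far}}$. Corollary \ref{cor:estimates_with_bubbles} gives $|PW_\delta|\le C\delta^{(n-2)/2}$ on $B_{\mathrm{far}}$, so that region contributes $O(\delta^{n})=o(\delta)$. On $\omega_{+}$ I perform the change of variables $y=(e_{n}-x)/\delta$ and invoke Corollary \ref{cor:resc_asympt},
\begin{align*}
\delta^{(n-2)/2}PW_\delta(e_{n}-\delta y)=U_{1,0}(y)-\delta\varphi_{0}(y)+O(\delta^{2}),
\end{align*}
which together with the cancellation $(p+1)\tfrac{n-2}{2}=n$ of the Jacobian yields
\begin{align*}
\int_{\omega_{+}}|PW_\delta|^{p+1}\,dx=\int_{\Omega_\delta^{+}}\bigl|U_{1,0}(y)-\delta\varphi_{0}(y)+O(\delta^{2})\bigr|^{p+1}\,dy,
\end{align*}
where $\Omega_\delta^{+}=\{y_{n}>\delta|y|^{2}/2,\ |y|<1/(2\delta)\}$ exhausts $\R^{n}_{+}$ as $\delta\to 0$. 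A Taylor expansion in $\delta$ produces two contributions: a bulk term $-(p+1)\delta\int_{\R^{n}_{+}}U_{1,0}^{p}\varphi_{0}$, which I evaluate by integrating by parts twice using $-\Delta U_{1,0}=U_{1,0}^{p}$ with $\partial_{n}U_{1,0}|_{\{y_{n}=0\}}=0$, the harmonicity of $\varphi_{0}$, and the prescribed Neumann datum from \eqref{phieq}, thereby reducing the integral to the one-dimensional integral defining $\mathfrak{C}$; and a boundary term coming from $\R^{n}_{+}\setminus\Omega_\delta^{+}$, i.e.\ the thin curved layer $\{0<y_{n}<\delta|y'|^{2}/2\}$, whose parametrization by $y'$ gives at leading order $-\tfrac{1}{2}\mathfrak{B}\delta$. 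Using the odd symmetry of $PW_\delta$ in $x_{n}$ to double the $\omega_{+}$-contribution then produces
\begin{align*}
\mathcal{I}(\delta)=\mathfrak{A}+(2n\,\mathfrak{C}-\mathfrak{B})\,\delta+o(\delta).
\end{align*}

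For $\mathcal{J}(\delta)$, the same rescaling yields $\log|PW_\delta(e_{n}-\delta y)|=-\tfrac{n-2}{2}\log\delta+\log U_{1,0}(y)+O(\delta)$ wherever $U_{1,0}-\delta\varphi_{0}>0$ (which, for small $\delta$, covers the bulk of $\Omega_\delta^{+}$). Multiplying by $|PW_\delta|^{p+1}$, rescaling, and using $\int_{\R^{n}_{+}}U_{1,0}^{p+1}=\tfrac{\mathfrak{A}}{2}$ and $\int_{\R^{n}_{+}}U_{1,0}^{p+1}\log U_{1,0}=\tfrac{\mathfrak{D}}{2}$, followed by doubling, yields $\mathcal{J}(\delta)=-\tfrac{n-2}{2}\mathfrak{A}\log\delta+\mathfrak{D}+o(1)$. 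Substituting $\mathcal{I}$ and $\mathcal{J}$ into the first paragraph's expansion, inserting $\delta=d\eps$ (so that $\log\delta=\log|d\eps|$) and $\tfrac{1}{p+1}=\tfrac{n-2}{2n}$, produces the claimed formula. The subtlest point is that $\delta\varphi_{0}/U_{1,0}$ grows like $\delta|y|$ at infinity, so the Taylor expansion of $|U_{1,0}-\delta\varphi_{0}|^{p+1}$ is not uniformly valid on the expanding domain $\Omega_\delta^{+}$; this is handled by truncating at a radius $R$ independent of $\delta$, where the expansion is uniform, and using the integrability of $U_{1,0}^{p+1}$ to make the tail contribution $o(\delta)$. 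A companion delicacy is the bookkeeping of the boundary layer near $\partial\R^{n}_{+}$, whose contribution must be matched precisely with $-\mathfrak{B}\delta$ without double-counting against the bulk $\varphi_{0}$-term.
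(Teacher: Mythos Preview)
Your proposal is correct and follows essentially the same approach as the paper: both split the integrand via the $\eps$-expansion of $|t|^{p+1+\eps}$ and of $\tfrac{1}{p+1+\eps}$, reduce to computing $\mathcal I(\delta)=\int_B|PW_\delta|^{p+1}$ to order $\delta$ (yielding the $\mathfrak A$, $\mathfrak B$, $\mathfrak C$ terms) and $\mathcal J(\delta)=\int_B|PW_\delta|^{p+1}\log|PW_\delta|$ to order $o(1)$ (yielding the $\mathfrak A\log\delta$ and $\mathfrak D$ terms), and then evaluate each by rescaling near $\pm e_n$, extracting the curvature contribution $\mathfrak B$ from the thin layer $\{0<y_n<\tfrac{\delta}{2}|y'|^2\}$ and the $\varphi_0$-contribution $\mathfrak C$ by Green's identity using $-\Delta U_{1,0}=U_{1,0}^p$, $\partial_n U_{1,0}|_{y_n=0}=0$, and the Neumann datum for $\varphi_0$. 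The paper organizes the same computations as Lemmas~\ref{J1lem} and~\ref{J2lem} (invoking Lemma~\ref{AB:lem} for the curvature term), while you invoke Corollary~\ref{cor:resc_asympt} directly; the two are equivalent.
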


We argue as in \cite[Proposition A.1]{RW05}. We start by noting that
\begin{align}\label{pp1exp}
\frac{1}{p+1+\varepsilon}
=\frac{1}{p+1}-\frac{1}{(p+1)^2}\varepsilon+o(\varepsilon)
=\frac{n-2}{2n}-\frac{(n-2)^2}{4n^2}\eps+o(\eps).
\end{align}
Moreover,
\begin{align}\label{J1J2exp}
\int_{{B}}|PW_\delta|^{p+1+\eps}
=\int_{{B}}|PW_\delta|^{p+1}+|PW_\delta|^{p+1}(|PW_\delta|^\varepsilon-1)
=:J_1+J_2.
\end{align}
We obtain expansions for each of these terms. For convenience, within this subsection we denote
\begin{align*}
\psi(x):=&W_{\delta}(x)- PW_{\delta}(x)\\
=&\delta^{-\frac{n-4}{2}}\Big(\varphi_0\Big(\frac{e_n-x}{\delta}\Big)-\varphi_0\Big(\frac{e_n+x}{\delta}\Big)\Big)-\zeta_\delta(x),\qquad\zeta_\delta(x)=O(\delta^{\frac{6-n}{2}}). 
\end{align*}

\begin{lemma}\label{J1lem} We have
\begin{align}
&\int_{\omega_+}|W_{\delta}(x)-\psi(x)|^{p+1}=\int_{\omega_-}|W_{\delta}(x)-\psi(x)|^{p+1}
=\frac{\mathfrak{A}}{2}+(-\frac{\mathfrak{B}}{2}+n \mathfrak{C})\delta+o(\delta),\label{5}
\end{align}
where $\mathfrak{A}$, $\mathfrak{B}$, and $\mathfrak{C}$ are defined in \eqref{fraks}. In particular,
\begin{align*}
J_1=\int_{{B}}|PW_\delta|^{p+1}
=\int_{{B}}|W_{\delta}(x)-\psi(x)|^{p+1}=\mathfrak{A}+(-\mathfrak{B}+2n \mathfrak{C})\delta+o(\delta).
\end{align*}
\end{lemma}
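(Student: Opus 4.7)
The plan is to reduce the integral on $B$ to one on $\omega_+$ via symmetry, then rescale to the half-space and expand. First, since $PW_\delta = W_\delta - \psi$ is odd in $x_n$, $|PW_\delta|^{p+1}$ is even in $x_n$, and the reflection $(x',x_n)\mapsto(x',-x_n)$ carries $\omega_+$ bijectively onto $\omega_-$, which proves the first identity in \eqref{5}. On the complement $B\setminus(\omega_+\cup\omega_-)$, Corollary \ref{cor:estimates_with_bubbles} yields $|PW_\delta|\le C(U_{\delta,e_n}+U_{\delta,-e_n}) = O(\delta^{(n-2)/2})$, since $\dist(x,\pm e_n)\ge 1/2$ there. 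Consequently the $L^{p+1}$-contribution of this region to $J_1$ is $O(\delta^n)=o(\delta)$, and it suffices to compute $\int_{\omega_+}|PW_\delta|^{p+1}\,dx$.

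On $\omega_+$ I would change variables $y=(e_n-x)/\delta$, so that $dx=\delta^n\,dy$ and $\omega_+$ is mapped onto $D_\delta:=\{y\in\R^n: y_n\ge\delta|y|^2/2,\ |y|\le 1/(2\delta)\}$, which exhausts $\R^n_+$ as $\delta\to 0$. By Lemma \ref{exp:lem}, on $\omega_+$ the $U_{\delta,-e_n}$-term and the $\varphi_0$-term centered at $-e_n/\delta$ are both $O(\delta^{(n-2)/2})$, and the error $\zeta_\delta$ is $O(\delta^{(6-n)/2})$ (with a logarithmic factor when $n=4$); in rescaled variables
\[
\delta^{(n-2)/2}PW_\delta(e_n-\delta y) = U_{1,0}(y) - \delta\,\varphi_0(y) + \mathcal{E}_\delta(y),
\]
where $\mathcal{E}_\delta$ collects all the lower-order contributions and gives only $o(\delta)$ after raising to $p+1$ and integrating. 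I would then split $D_\delta = \{|y|\le R\}\cup\{R\le|y|\le 1/(2\delta)\}$ with $R=R(\delta)\to\infty$ slowly enough that $\delta R^2\to 0$. On the bulk the Taylor expansion
\[
|U_{1,0}-\delta\varphi_0|^{p+1} = U_{1,0}^{p+1} - (p+1)\delta\, U_{1,0}^p\varphi_0 + O\bigl(\delta^2 U_{1,0}^{p-1}\varphi_0^2\bigr)
\]
is valid, and the quadratic remainder integrates to $o(\delta)$ using the decay $\varphi_0(y) = O(|y|^{-(n-3)})$ coming from \eqref{A6}. On the annulus, direct pointwise bounds, together with $\int_{|y|>R}U_{1,0}^{p+1}\to 0$ and $\delta^{p+1}\int_{|y|\le 1/(2\delta)}|\varphi_0|^{p+1}=o(\delta)$ (in dimension $n=4$ this last point uses $\delta^{p+1}|\log\delta|=\delta^4|\log\delta|=o(\delta)$), show this piece is negligible.

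It remains to evaluate $\int_{D_\delta}U_{1,0}^{p+1}\,dy$ and $\int_{D_\delta}U_{1,0}^p\varphi_0\,dy$. For the first, write $\int_{D_\delta}U_{1,0}^{p+1} = \mathfrak{A}/2 - \int_{\R^n_+\setminus D_\delta}U_{1,0}^{p+1}$; the missing set is, up to a tail of measure $O(\delta^n)$, the thin layer $\{0<y_n<\delta|y|^2/2\}$, and a Fubini/layer-cake computation gives
\[
\int_{\{0<y_n<\delta|y|^2/2\}}U_{1,0}^{p+1}\,dy = \frac{\delta}{2}\int_{\R^{n-1}}|y'|^2\, U_{1,0}(y',0)^{p+1}\,dy' + o(\delta) = \frac{\delta}{2}\,\mathfrak{B}+o(\delta),
\]
by the definition of $\mathfrak{B}$. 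For the second, integrate by parts twice on $\R^n_+$ using $-\Delta U_{1,0}=U_{1,0}^p$, $\Delta\varphi_0=0$, the symmetry $\partial_{y_n}U_{1,0}(y',0)=0$, and the Neumann datum \eqref{phieq}, to reduce to the boundary integral $\int_{\partial\R^n_+}U_{1,0}(y',0)\,\partial_\nu\varphi_0(y',0)\,dy'$; an explicit computation combined with the algebraic identity $(p+1)(n-2)/2 = n$ and the definition of $\mathfrak{C}$ yields
\[
-(p+1)\delta\int_{\R^n_+}U_{1,0}^p\varphi_0\,dy = n\,\mathfrak{C}\,\delta.
\]
Summing the two contributions gives $\int_{\omega_+}|PW_\delta|^{p+1}\,dx = \mathfrak{A}/2 + (-\mathfrak{B}/2 + n\mathfrak{C})\delta + o(\delta)$, which combined with the symmetry and the complement estimate proves \eqref{5} and the stated value of $J_1$. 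The main obstacle is the annular region $R\le|y|\le 1/(2\delta)$, where $\delta\varphi_0$ ceases to be small compared to $U_{1,0}$ and the Taylor expansion breaks down; this is overcome by tuning $R(\delta)$ carefully against the decay rate of $\varphi_0$ at infinity. Controlling the $\zeta_\delta$-corrections and the subtle integrability balance when $n=4$ are further technical points that need care but are handled by \emph{Corollary \ref{cor:estimates_with_bubbles}} together with the bounds in Lemma \ref{exp:lem}.
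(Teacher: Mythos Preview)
Your proposal is correct and follows essentially the same strategy as the paper's proof: rescale to the half-space, Taylor-expand $|U_{1,0}-\delta\varphi_0|^{p+1}$, extract $\mathfrak{A}/2-\mathfrak{B}\delta/2$ from the leading term via the boundary-layer computation (which the paper outsources to Lemma~\ref{AB:lem}), and obtain $n\mathfrak{C}\delta$ from the cross term by integrating by parts against the Neumann datum of $\varphi_0$. Your explicit bulk/annulus splitting with $R(\delta)\to\infty$ is more careful than the paper, which simply asserts the integrated remainder is $o(\delta)$ after the expansion; otherwise the arguments coincide.
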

\begin{proof}
Let $D:=\omega_+-e_n$, then
\begin{align*}
&\int_{\omega_+}|W_{\delta}(x)-\psi(x)|^{p+1}
=\int_{\omega_+}|U_{\delta,e_n}(x)-U_{\delta,-e_n}(x)-\psi(x)|^{\frac{2n}{n-2}}\\
&=\int_{\delta^{-1}D}\left|
\alpha_n \frac{\delta^{-\frac{n-2}{2}}}{(1+|x|^2)^{\frac{n-2}{2}}}
-\psi(\delta x+e_n)\right|^{\frac{2n}{n-2}}\delta^{n}+o(\delta)\\
&=\int_{\delta^{-1}D}\left|
\alpha_n \frac{1}{(1+|x|^2)^{\frac{n-2}{2}}}
-
\Big(\varphi_0(-x)-\varphi_0\left(\frac{2e_n+\delta x}{\delta}\right)\Big)\delta
-\zeta_\delta(\delta x+e_n)\delta^{\frac{n-2}{2}}
\right|^{\frac{2n}{n-2}}+o(\delta)\\
&=\int_{\delta^{-1}D}\left|
U_{1,0}
-
\varphi_0(-x)\delta
\right|^{\frac{2n}{n-2}}+o(\delta)
=\int_{\delta^{-1}D}\left|U_{1,0}\right|^{\frac{2n}{n-2}}
-\delta \frac{2n}{n-2}\left|U_{1,0}\right|^{\frac{n+2}{n-2}}\varphi_0(-x)
+o(\delta),
\end{align*}
where we use Lemma \ref{exp:lem}, the decay estimates for $\varphi_0$ and $\zeta_\delta$, respectively inequalities \eqref{A6} and \eqref{eq:estimate_PW-W}. The integral $\int_{\delta^{-1}D}\left|U_{1,0}\right|^{\frac{2n}{n-2}}$ is expanded in the proof of Lemma \ref{AB:lem} and we have that
\begin{align*}
\int_{\delta^{-1}D}\left|U_{1,0}\right|^{\frac{2n}{n-2}}=\frac{\mathfrak{A}}{2}-\frac{\mathfrak{B}}{2}\delta + o(\delta).
\end{align*}
Moreover, since $U_{1,0}$ is even, $\partial_{x_n} U_{1,0}=0$ on $\partial \R^n_+$ and
\begin{align*}
&\delta \frac{2n}{n-2}\int_{\delta^{-1}D}\left|U_{1,0}\right|^{\frac{n+2}{n-2}}\varphi_0(-x)
=\delta \frac{2n}{n-2}\int_{\R^n_+}(-\Delta)U_{1,0}\varphi_0(x)+o(\delta) =\delta \frac{2n}{n-2}\int_{\partial \R^n_+} U_{1,0} \partial_\nu \varphi_0\\
&=-\delta \frac{2n}{n-2}\int_{\R^{n-1}}
\alpha_n\frac{n-2}{2}\frac{|y|^2}{(1+|y|^2)^{n-1}}
+o(\delta)
=-\delta n \alpha_n\int_{\R^{n-1}}
\frac{|y|^2}{(1+|y|^2)^{n-1}}
+o(\delta).
\end{align*}
By symmetry, this establishes \eqref{5}. Finally, note that, by Corollary \ref{cor:estimates_with_bubbles},               
\[
\int_{{B}\backslash (\omega_+\cup\omega_-)}|W_{\delta}(x)-\psi(x)|^{p+1}
= \int_{{B}\backslash (\omega_+\cup\omega_-)}|PW_{\delta}(x)|^{p+1}\leq C\int_{{B}\backslash (\omega_+\cup\omega_-)}(|U_{\delta,e_n}|+|U_{\delta,-e_n}|)^{\frac{2n}{n-2}}=o(\delta). \qedhere
\]
\end{proof}

\begin{lemma}\label{J2lem}
We have, as $\eps\to 0$,
\begin{align*}
J_2&=-\eps \frac{n-2}{2}\log(\delta){\mathfrak A}
+\eps{\mathfrak D}+o(\eps).
\end{align*}
\end{lemma}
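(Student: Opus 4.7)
The plan is to Taylor expand $a\mapsto a^\eps$ around $\eps=0$ to isolate the leading log-integral, and then to compute it by symmetry together with a blow-up near $e_n$. First, I would write $a^\eps = 1 + \eps\log a + \tfrac{\eps^2}{2}(\log a)^2 a^{\theta\eps}$ for some $\theta=\theta(a,\eps)\in(0,1)$, so that
\[
J_2 = \eps\int_{B} |PW_\delta|^{p+1}\log|PW_\delta|\,dx + O\!\left(\eps^2 \int_{B} |PW_\delta|^{p+1}(\log|PW_\delta|)^2 \max(1,|PW_\delta|^\eps)\,dx\right).
\]
Splitting the domain into $\{|PW_\delta|\leq 1\}$ (where $a^{p+1}(\log a)^2$ is pointwise bounded for $a\in(0,1]$) and $\{|PW_\delta|> 1\}$ (where Corollary \ref{cor:estimates_with_bubbles} together with Lemma \ref{lemma:LpnormBubble} give $\int_B |PW_\delta|^{p+1+\eps} = O(1)$ and $|\log|PW_\delta||\leq C|\log\delta|$), the remainder integral is $O(\log^2 \eps)$, so its overall contribution to $J_2$ is $o(\eps)$.

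For the leading log-integral, odd symmetry of $PW_\delta$ in $x_n$ makes the integrand even in $x_n$. Splitting $B = \omega_+\cup\omega_-\cup(B\setminus(\omega_+\cup\omega_-))$ and using Corollary \ref{cor:estimates_with_bubbles} to bound $|PW_\delta|\leq C\delta^{(n-2)/2}$ on the outer region (so its contribution is $O(\delta^n|\log\delta|)=o(1)$), I would reduce matters to $2\int_{\omega_+} |PW_\delta|^{p+1}\log|PW_\delta|\,dx$.

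On $\omega_+$, Lemma \ref{exp:lem} gives $PW_\delta = U_{\delta,e_n} + r_\delta$ with $|r_\delta| = O(\delta^{(n-2)/2})$ uniformly. Since $U_{\delta,e_n}(x)\geq c\delta^{(n-2)/2}$ throughout $\omega_+$, one has $|PW_\delta|\geq c\delta^{(n-2)/2}$ and hence $|\log|PW_\delta||\leq C|\log\delta|$ on $\omega_+$. Expanding $|PW_\delta|^{p+1}\log|PW_\delta| = U_{\delta,e_n}^{p+1}\log U_{\delta,e_n} + O(U_{\delta,e_n}^{p}|r_\delta||\log\delta|)+\cdots$ and integrating, each error contributes $O(\delta|\log\delta|)=o(1)$. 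Finally, the rescaling $x=\delta y+e_n$ yields
\[
\int_{\omega_+} U_{\delta,e_n}^{p+1}\log U_{\delta,e_n}\,dx = -\frac{n-2}{2}\log\delta\int_{\delta^{-1}D} U_{1,0}^{p+1}\,dy + \int_{\delta^{-1}D} U_{1,0}^{p+1}\log U_{1,0}\,dy,
\]
with $D = \omega_+ - e_n$. As $\delta\to 0$ the domain exhausts $\R^n_+$, so by radial symmetry of $U_{1,0}$ the two integrals tend to $\mathfrak{A}/2$ and $\mathfrak{D}/2$ respectively. Multiplying by the symmetry factor $2$ produces the claimed expansion.

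The main obstacle will be the error analysis involving $\log|PW_\delta|$: wherever $|PW_\delta|$ is small its logarithm is potentially large and negative, so I will have to exploit that on $\omega_\pm$ the dominance of $U_{\delta,\pm e_n}$ provides a uniform lower bound $|PW_\delta|\geq c\delta^{(n-2)/2}$, keeping $|\log|PW_\delta||$ controlled by $|\log\delta|$ and making all perturbative corrections of size $\delta|\log\delta|=o(1)$.
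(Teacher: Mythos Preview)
Your overall strategy---Taylor expand $|PW_\delta|^\eps$, reduce by symmetry to $\omega_+$, replace $PW_\delta$ by $U_{\delta,e_n}$, then rescale---is exactly the paper's. The treatment of the $\eps^2$-remainder and of the outer region $B\setminus(\omega_+\cup\omega_-)$ is fine.

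The gap is in the replacement step on $\omega_+$. You write $PW_\delta=U_{\delta,e_n}+r_\delta$ with $|r_\delta|=O(\delta^{(n-2)/2})$ uniformly. This is false for $n\ge 5$: by Lemma~\ref{exp:lem} the remainder contains the term $\delta^{-(n-4)/2}\varphi_0\big(\tfrac{e_n-x}{\delta}\big)$, which at $x=e_n$ is of order $\delta^{(4-n)/2}\to\infty$. The correct pointwise bound is \eqref{eq:estimate_PW-W}, namely $|r_\delta(x)|\le C\delta^{(n-2)/2}(\delta+|x-e_n|)^{-(n-3)}$ on $\omega_+$, which is only comparable to $U_{\delta,e_n}$ (not small relative to it) near $e_n$. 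Consequently your lower bound $|PW_\delta|\ge c\delta^{(n-2)/2}$ does not follow from size considerations alone; to get it you would need a sign argument (e.g.\ $\varphi_0\le 0$ from \eqref{phinull}, together with $U_{\delta,e_n}>U_{\delta,-e_n}$ on $\omega_+$), which you do not supply. Likewise, in the error term $O\big(U_{\delta,e_n}^{p}|r_\delta|\,|\log\delta|\big)$ you must use the true decay of $r_\delta$ to see that its integral over $\omega_+$ is $O(\delta)$, not the claimed uniform bound.

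The paper sidesteps both issues by using the elementary inequality \eqref{eq:last_estimate},
\[
\big||a+b|^{q}\log|a+b|-|a|^{q}\log|a|\big|\le C\big(|a|^{q-1-\gamma}|b|+|a|^{q-1+\gamma}|b|+|b|^{q-\gamma}+|b|^{q+\gamma}\big),
\]
applied twice (first with $a=W_\delta$, $b=PW_\delta-W_\delta$, then with $a=U_{\delta,e_n}$, $b=-U_{\delta,-e_n}$). This reduces the replacement errors to pure $L^q$-estimates of bubbles and $V_{\delta,\pm e_n}$ via Lemma~\ref{lemma:LpnormBubble}, with no need to control $\log|PW_\delta|$ pointwise or to know that $PW_\delta$ stays away from zero. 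Your route can be repaired along the lines above, but as written the key size claim on $r_\delta$ is incorrect.
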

\begin{proof}
Let $D:=\omega_+-e_n$ and recall that $\delta=d\eps$. We start by claiming that
\begin{align}\label{eq:J_2}
J_2
=\int_{{B}}|PW_\delta|^{p+1}(|PW_\delta|^\varepsilon-1)
=\eps\int_{{B}}|PW_\delta|^{p+1}\log|PW_\delta| + o(\delta).
\end{align}
Indeed, by reasoning like in the proof of Lemma \ref{lemma:f_eps_expansion},
\[
|t|^{p+1+\eps}=|t|^{p+1}+\eps |t|^{p+1}\log|t|+\eps^2r_\eps(t), \quad \text{
where }\quad 
|r_\eps(t)|\leq \frac{1}{2}(|t|^{p+1}+|t|^{p+1+\eps})(\log |t|)^2.
\]
Therefore,
\[
J_2
=
\eps\int_{{B}}|PW_\delta|^{p+1}\log|PW_\delta| +\eps^2\int_{B} r_\eps(PW_\delta);
\]
now the claim \eqref{eq:J_2} follows by reasoning like in the proof of Lemma \ref{lemma:|i*(f_eps(w_delta)-f_0(w_delta))|_aux}, recalling that $|PW_\delta|\leq |U_{\delta,e_n}|+|U_{\delta,-e_n}|$ and using Lemma \ref{lemma:LpnormBubble}.

Then,
\begin{align*}
\eps \int_{\omega_+}|PW_\delta|^{p+1}\log|PW_\delta|&=\eps \int_{\omega_+}|W_\delta-\psi|^{p+1}\log|W_\delta-\psi|\\
&=\eps \int_{\omega_+}|W_\delta|^{p+1}\log|W_\delta|+o(\eps)\\
&=\eps \int_{\omega_+}|U_{\delta,e_n}|^{p+1}\log|U_{\delta,e_n}|+o(\eps)\\
&=\eps \int_{\delta^{-1}D}|\delta^{-\frac{n-2}{2}}U_{1,0}|^{\frac{2n}{n-2}}\log|\delta^{-\frac{n-2}{2}}U_{1,0}|\delta^n\\
&=\eps \int_{\delta^{-1}D}|U_{1,0}|^{\frac{2n}{n-2}}\log|\delta^{-\frac{n-2}{2}}U_{1,0}|+o(\eps)\\
&=\eps \int_{\delta^{-1}D}|U_{1,0}|^{\frac{2n}{n-2}}
\left(-
\frac{n-2}{2}\log|\delta|
+\log|U_{1,0}|
\right)+o(\eps)\\
&=-\eps \frac{n-2}{2}\log|\delta|\int_{\delta^{-1}D}|U_{1,0}|^{\frac{2n}{n-2}}
+\eps\int_{\delta^{-1}D}|U_{1,0}|^{\frac{2n}{n-2}}\log|U_{1,0}|+o(\eps),
\end{align*}
where we have used the estimate \eqref{eq:last_estimate} with $q=p+1$, together with Lemma \ref{exp:lem}, Corollary \ref{cor:estimates_with_bubbles} and Lemma \ref{lemma:LpnormBubble}.

One can argue similarly for the integral over $\omega_-$ and the integral over ${B}\backslash (\omega_+\cup\omega_-)$ is $o(\eps)$ as $\eps\to 0$. Therefore, since (by Lemma \ref{exp:lem})
\begin{align*}
\int_{\delta^{-1}D}\left|U_{1,0}\right|^{\frac{2n}{n-2}}=\frac{\mathfrak{A}}{2}-\frac{\mathfrak{B}}{2}\delta + o(\delta)
\end{align*}
and 
\begin{align*}
    \int_{\delta^{-1}D}|U_{1,0}|^{\frac{2n}{n-2}}\log|U_{1,0}|= \frac{1}{2}\int_{\R^n}|U_{1,0}|^{\frac{2n}{n-2}}\log|U_{1,0}|+o(1)=\frac{\mathfrak{D}}{2}+o(1),
\end{align*}
we have that
 \begin{align*}
 J_2
&=-\eps\frac{n-2}{2}\log|d\eps|\mathfrak A
+\eps{\mathfrak D}+o(\eps),
 \end{align*}
 as claimed.
\end{proof}

\begin{proof}[Proof of Proposition \ref{prop:exp:2}]
From \eqref{J1J2exp} and Lemmas \ref{J1lem} and \ref{J2lem}, we have
\begin{align*}
\int_{{B}}|PW_\delta|^{p+1+\eps}
=\mathfrak{A}-\mathfrak{B}\,d\eps +2n \mathfrak{C}\, d\eps-
\frac{n-2}{2}\mathfrak{A}\, \eps\log|d\eps|
+\mathfrak{D}\eps
+ o(\eps).
\end{align*}
Combining this with \eqref{pp1exp}, we deduce
\begin{align*}
&\frac{1}{p+1+\varepsilon}\int_{{B}}|PW_\delta|^{p+1+\eps}\\
&=\left(\frac{n-2}{2n}-\frac{(n-2)^2}{4n^2}\eps+o(\eps)\right)(\mathfrak{A}-\mathfrak{B}\,d\eps +2n \mathfrak{C}\, d\eps
-
\frac{n-2}{2}\mathfrak{A}\, \eps \log|d\eps|
+\mathfrak{D}\,\eps
+ o(\eps))\\
&=\frac{n-2}{2n}(\mathfrak{A}-
\mathfrak{B}\,d\eps +2n \mathfrak{C}\, d\eps
)
+\left(
-\frac{(n-2)^2}{4n}\mathfrak{A}\, \eps \log|d\eps|
+\frac{n-2}{2n}\mathfrak{D}\,\eps
-\frac{(n-2)^2}{4n^2}\eps\mathfrak{A}
\right)
+o(\eps),
\end{align*}
as $\eps\to 0^+$.
\end{proof}

\begin{proof}[Proof of Theorem \ref{epxansion:thm}]
The expansion follows directly from Lemma \ref{lemma:mainterm} and Propositions \ref{prop:exp:1} and \ref{prop:exp:2}.
\end{proof}

\section{Proof of the main theorems} \label{sec:final_proof}

\begin{lemma}
Let $\Psi$ be as in \eqref{eq:Psi(d)}. This function has a unique critical point at
\begin{equation}\label{eq:d_n}
d_*=\frac{(n-2)^2\mathfrak{A}}{2d(n(n-2)\mathfrak{C}+2\mathfrak{B})},
\end{equation}
 which is a global maximum.
\end{lemma}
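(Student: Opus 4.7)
The plan is to directly compute $\Psi'(d)$ and solve the resulting equation, then verify concavity. The function $\Psi$ has the form $\Psi(d) = A_0 + A_1 \log d - A_2 d$ with
\[
A_1 := \frac{(n-2)^2}{4n}\mathfrak{A}, \qquad A_2 := \frac{n-2}{2}\mathfrak{C}+\frac{\mathfrak{B}}{n},
\]
and an additive constant $A_0$. From the integral definitions \eqref{fraks} of $\mathfrak{A},\mathfrak{B},\mathfrak{C}$ as integrals of positive functions, both $A_1$ and $A_2$ are strictly positive (this is the only ``content'' of the lemma and should be noted, since it is what makes the problem well-posed).

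Differentiating on $(0,\infty)$ gives
\[
\Psi'(d)= \frac{A_1}{d} - A_2,
\]
and $\Psi'(d)=0$ if and only if $d = A_1/A_2$, which is exactly the value $d_*$ displayed in \eqref{eq:d_n} (after clearing denominators: $A_1/A_2 = \frac{(n-2)^2\mathfrak{A}}{2(n(n-2)\mathfrak{C}+2\mathfrak{B})}$). Moreover,
\[
\Psi''(d) = -\frac{A_1}{d^2} < 0 \quad \text{for all } d>0,
\]
so $\Psi$ is strictly concave on $(0,\infty)$; thus $d_*$ is the unique critical point and it is a global maximum. One can also observe this qualitatively from the fact that $\Psi(d)\to -\infty$ as $d\to 0^+$ (because $\log d \to -\infty$ and $A_1>0$) and as $d\to +\infty$ (because $-A_2 d \to -\infty$ and $A_2>0$), which by continuity forces an interior maximum; uniqueness and its location then follow from the monotonicity of $d \mapsto A_1/d - A_2$.

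There is no real obstacle here: the computation is a one-line derivative. The only thing worth emphasizing (and the ``conceptually relevant'' point, since the main theorem will invoke this maximum via Lemma \ref{eq:criticalpointJ} and Theorem \ref{epxansion:thm}) is the strict positivity of $A_1$ and $A_2$, which guarantees that for every sufficiently small $\eta \in (0,1)$ one has $d_* \in (\eta, 1/\eta)$, so that the critical point falls in the admissible range where $\phi_{d,\eps}$ has been constructed in Proposition \ref{prop:C^1phi}.
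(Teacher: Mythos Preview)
Your proof is correct and follows essentially the same approach as the paper: compute $\Psi'(d)$, solve for the unique zero, and argue that it is a global maximum (the paper uses the limits $\Psi(d)\to-\infty$ at $0^+$ and $+\infty$, while you additionally verify strict concavity via $\Psi''<0$, which is equivalent). Your explicit remark on the positivity of $A_1,A_2$ is a useful clarification that the paper leaves implicit, and your computed value of $d_*$ is correct (the extra ``$d$'' in the denominator of \eqref{eq:d_n} is evidently a typo in the paper).
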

\begin{proof}
Note that $\Psi(d)\to -\infty$ as  $d\to 0^+$ and as $d\to +\infty$. Therefore, $\Psi$ achieves a global maximum. Since 
\[
\Psi'(d)=\frac{(n-2)^2 \mathfrak{A} }{4n} \frac{1}{d}-\left(\frac{n-2}{2}\mathfrak{C}+\frac{\mathfrak{B}}{n}\right),
\]
the point $d_*$ given in \eqref{eq:d_n} corresponds to the \emph{unique} critical point of $\Psi$, which is a global maximum.
\end{proof}
\begin{proof}[Proof of Theorem \ref{thm:main:ball}]

Let $d_*>0$ be the unique global maximum of $\Psi$ and let $\eta\in(0,1)$ be  such that $d_*\in (\eta,1/\eta).$ Then $\Psi(\eta),\Psi(1/\eta)< \Psi(d^*)$. On the other hand, for such $\eta$ we know from  Theorem \ref{epxansion:thm} that
\begin{align*}
J_\eps(d)=\frac{\mathfrak{A}}{n}+ \frac{(n-2)^2}{4n}\mathfrak{A}\eps \log \eps +\Psi(d) \eps + o(\eps),
\end{align*}
as $\eps\to 0$, uniformly in $(\eta,1/\eta)$. Then there exists $\eps_0>0$ such that
\[
J_\eps(\eta)< J_\eps(d^*)
\quad \text{ and } \quad J_\eps(1/\eta)< J_\eps(d^*)
\]
for $\eps\in (0,\eps_0)$. Therefore,  $J_\eps$ has an interior maximum in $(\eta,1/\eta)$, hence a critical point. We can now conclude by using Lemma \ref{eq:criticalpointJ}.
\end{proof}

\section{Open problems}\label{op:sec}
We believe that the following are some interesting open questions. 
\begin{enumerate}
\item[$(i)$] If ${B}$ is a ball, is is true that \eqref{super:intro} has a solution for all exponents $q>p$ or is there a \emph{Neumann critical exponent} after which there are no solutions?
\item[$(ii)$] The solutions given by Theorem \ref{thm:main:ball} are most likely \emph{not} of least-energy type. Indeed, we believe that the Morse index of our solution could be at least 2 (arguing as in \cite[Theorem 1]{BahriLiRey}).  Is there a least-energy solution in the supercritical regime? 
\end{enumerate}
Finally, we observe that our approach does not guarantee that the solutions given by Theorem \ref{thm:main:ball} are bounded (and therefore classical) solutions. However, if one uses  weighted H\"older 
norms as in  \cite{PFM03} (the pioneering paper  concerning the slightly-supercritical problem), instead of the classical Sobolev norms,  one can most likely build, via the same Lyapunov-Schmidt reduction, regular solutions.

\appendix

\section{An expansion for \texorpdfstring{$PW_\delta$}{} }\label{app:expansion}

Recall that ${B}=B_1(0)$, $W_\delta:=U_{\delta,e_n}-U_{\delta,-e_n}$ and $P W_{\delta}$ is the solution of
\begin{align*}
-\Delta P W_{\delta}=-\Delta  U_{\delta,e_n}+\Delta  U_{\delta,-e_n}\ \hbox{in}\ {B},\qquad \partial_\nu P W_{\delta}=0\ \hbox{on}\ \partial{B}, \qquad \int\limits_{{B}} P W_{\delta}dx=0.
\end{align*}
Let $\R^n_+:=\{x\in \R^n\::\: x_n>0\}$ and let $\varphi_0$ be the solution of \eqref{phieq}, that is,
\begin{align*}
-\Delta \varphi_0 = 0 \text{ in }\R^n_+,\qquad
  \frac{\partial \varphi_{0}}{\partial x_n}
  =\alpha_n\frac{n-2}{2}\frac{|x'|^2}{(1+|x'|^2)^\frac{n}{2}} \text{ on }\partial \R^n_+,\qquad
 \varphi_0\to 0  \text{ as }|x|\to \infty.
\end{align*}
Using the Poisson kernel for the halfspace and the fact that $n\geq 4$, we obtain the representation
\begin{align}\label{phinull}
 \varphi_0(x)=-\frac{\alpha_n}{\omega_{n}}\int_{\R^{n-1}}\frac{|y|^2}{(1+|y|^2)^\frac{n}{2}|x-y|^{n-2}}\ dy,
\end{align}
where $\omega_{n}$ denotes the measure of the unit sphere in $\R^n$, $|\cdot|$ is the Euclidean norm in $\R^n$, $x=(x_1,\ldots, x_n)\in \R^n_+$, and $y=(y_1,\ldots, y_{n-1},0)$, see for instance \cite[Theorem 4]{CPGreenhalfspace}.  We note, however, that the formula  (27) in \cite{CPGreenhalfspace} has a sign mistake (indeed, if $\varphi_0$ were positive, it would violate the maximum principle, since $\varphi_0$ would be a positive harmonic function with an interior maximum). Moreover, observe that formula \eqref{phinull} is consistent with the case $n=3$  \cite[p. 120]{Kev2000}, or with the case of a Neumann problem in a bounded $C^1$ domain, see \cite[p.165]{FJR78}.

Rewriting \eqref{phinull} as
\[
\varphi_0(x)=-\frac{\alpha_n}{\omega_{n} |x|^{n-3}}\int_{\R^{n-1}} \frac{|z|^2}{\left(\frac{1}{|x|^2}+|z|^2\right)^\frac{n}{2} \left|\frac{x}{|x|}-z\right|^{n-2}}  \, dz
\] we deduce its asymptotic behaviour, which yields the existence of $C>0$ such that
\begin{align}\label{A6}
 |\varphi_0(x)|\leq \frac{C}{(1+|x|)^{n-3}},
 \qquad
 |\nabla \varphi_0(x)|\leq \frac{C}{(1+|x|)^{n-2}},
 \qquad
 |D^2 \varphi_0(x)|\leq \frac{C}{(1+|x|)^{n-1}}
\end{align}
for $x\in \R^n_+$. 

Before we proceed to the proof of Lemma \ref{exp:lem}, we present the following auxiliary result.

\begin{lemma}\label{lemma:asymptotics_aux}
Let 
\[
z_\delta:= \partial_\nu \left[ U_{\delta,0}(x)-\delta^{\frac{4-n}{2}} \varphi_0\left(\frac{x}{\delta}\right) \right],\qquad x\in \partial B_1(e_n).
\]
Then, as $\delta\to 0$,
\begin{align}\label{eq:asymptotics_aux1}
z_\delta=  O\left(\frac{\delta^\frac{4-n}{2}}{(1+|\frac{x'}{\delta}|)^{n-3}}\right) \text{ on } \partial B_1(e_n)\cap B_1(0),\qquad z_\delta=  O\left(\delta^\frac{n-2}{2}\right) \text{ on } \partial B_1(e_n) \cap  B_1^c(0).
\end{align}
and
\begin{align}\label{eq:asymptotics_aux2} 
\partial_\delta z_\delta=  O\left(\frac{\delta^\frac{2-n}{2}}{(1+|\frac{x'}{\delta}|)^{n-3}}\right) \text{ on } \partial B_1(e_n)\cap B_1(0),\qquad \partial_\delta z_\delta=  O\left(\delta^\frac{n-4}{2}\right) \text{ on } \partial B_1(e_n) \cap  B_1^c(0).
\end{align}

\end{lemma}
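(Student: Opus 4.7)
My plan is to compute $z_\delta$ directly by splitting it into $\partial_\nu U_{\delta,0}(x)$ and $-\partial_\nu[\delta^{(4-n)/2}\varphi_0(x/\delta)]$, show that the two leading contributions cancel up to a curvature correction, and control the remainder via the decay estimates \eqref{A6} for $\varphi_0$. The cancellation is exactly the motivation for introducing $\varphi_0$: the boundary datum in \eqref{phieq} is tailor-made so that $\delta^{(4-n)/2}\varphi_0(\cdot/\delta)$ absorbs the tangent-plane restriction of $\partial_\nu U_{\delta,0}$ at the origin.

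First I will parametrize $\partial B_1(e_n)$ near the origin using $|x-e_n|=1$, which yields the identities $x\cdot(x-e_n)=|x|^2/2$, $x_n=|x|^2/2$, and the key curvature relation $x_n\leq |x'|^2$ on the inner cap. A direct computation then gives
\[
\partial_\nu U_{\delta,0}(x) = -\frac{\alpha_n(n-2)}{2}\,\frac{\delta^{(n-2)/2}|x|^2}{(\delta^2+|x|^2)^{n/2}}.
\]
For the correction term I will decompose $\nabla\varphi_0(x/\delta)\cdot(x-e_n)=x'\cdot\nabla_{x'}\varphi_0(x/\delta)+(x_n-1)\partial_{x_n}\varphi_0(x/\delta)$ and Taylor-expand $\partial_{x_n}\varphi_0(x/\delta)$ around $x_n=0$; the Neumann datum from \eqref{phieq} gives the leading value $\alpha_n\tfrac{n-2}{2}|x'/\delta|^2/(1+|x'/\delta|^2)^{n/2}$. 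After rescaling, the main contribution of $\partial_\nu[\delta^{(4-n)/2}\varphi_0(x/\delta)]$ is exactly $-\tfrac{\alpha_n(n-2)}{2}\delta^{(n-2)/2}|x'|^2/(\delta^2+|x'|^2)^{n/2}$, which matches $\partial_\nu U_{\delta,0}$ under $|x|\mapsto |x'|$.

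Second, I will bound the leading-order difference via the mean value theorem applied to $g(t):=t/(\delta^2+t)^{n/2}$: since $|g'(t)|\leq C(\delta^2+t)^{-n/2}$, the difference $g(|x|^2)-g(|x'|^2)$ is bounded by $Cx_n^2/(\delta^2+|x'|^2)^{n/2}$, and invoking $x_n\leq C|x'|^2$ together with the prefactor $\delta^{(n-2)/2}$ produces the target $C\delta^{(n-2)/2}/(\delta+|x'|)^{n-3}=C\delta^{(4-n)/2}/(1+|x'/\delta|)^{n-3}$. The three leftover pieces of the correction term (the tangential gradient $\delta^{(2-n)/2}x'\cdot\nabla_{x'}\varphi_0(x/\delta)$, the second-order Taylor residual of $\partial_{x_n}\varphi_0$, and the term $\delta^{(2-n)/2}x_n\partial_{x_n}\varphi_0(x/\delta)$) are each controlled by \eqref{A6} together with $|x'|\leq \delta+|x|$ and $x_n\leq C|x'|^2$, and they fit inside the same bound. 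On the outer cap $\partial B_1(e_n)\cap B_1^c(0)$, where $|x|\geq 1$, no cancellation is needed: crude use of \eqref{A6} yields $|\partial_\nu U_{\delta,0}(x)|\leq C\delta^{(n-2)/2}$ and $|\partial_\nu[\delta^{(4-n)/2}\varphi_0(x/\delta)]|\leq C\delta^{n/2}$, both absorbed into $O(\delta^{(n-2)/2})$.

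The bounds for $\partial_\delta z_\delta$ in \eqref{eq:asymptotics_aux2} follow by differentiating the same expressions with respect to $\delta$ and repeating the cancellation argument; the algebraic structure is preserved, and the net effect of a single $\delta$-differentiation is to lower the overall prefactor by one power of $\delta$, accounting for the passage from $\delta^{(n-2)/2}$ to $\delta^{(n-4)/2}$ in the final estimates. The main obstacle will be that the naive separate bounds on $\partial_\nu U_{\delta,0}$ and $\partial_\nu[\delta^{(4-n)/2}\varphi_0(\cdot/\delta)]$ on the inner cap are of order $\delta^{(n-2)/2}/(\delta+|x|)^{n-2}$, which is larger than the stated target by one factor of $(\delta+|x|)$ in the regime $|x|\ll 1$; only the explicit cancellation of the leading terms, combined with the curvature estimate $x_n\leq C|x'|^2$, produces the correct rate.
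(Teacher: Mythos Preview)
Your proposal is correct and follows essentially the same route as the paper's proof: split into the outer cap (crude bounds) and the inner cap (cancel the leading terms of $\partial_\nu U_{\delta,0}$ and $\partial_\nu[\delta^{(4-n)/2}\varphi_0(\cdot/\delta)]$, then control the remainders via \eqref{A6} and the curvature relation $x_n\le C|x'|^2$). Your use of the exact identity $x\cdot(x-e_n)=|x|^2/2$ on $\partial B_1(e_n)$ is a slightly cleaner way to reach the expression $-\tfrac{\alpha_n(n-2)}{2}\delta^{(n-2)/2}|x|^2/(\delta^2+|x|^2)^{n/2}$ than the paper's component-wise computation with $\rho(x')=1-\sqrt{1-|x'|^2}$, but the subsequent replacement of $|x|$ by $|x'|$ (your MVT on $g$) and the decomposition of the $\varphi_0$ contribution are the same in substance. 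One minor slip: on the outer cap the $\varphi_0$ term gives $\delta^{(2-n)/2}\cdot C\delta^{n-2}=C\delta^{(n-2)/2}$, not $C\delta^{n/2}$; this does not affect your stated conclusion since you already absorb it into $O(\delta^{(n-2)/2})$.
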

\begin{proof}
We follow the ideas from \cite[Lemma A.1]{RW05}. 
\smallbreak

\noindent Step 1. We start by proving estimates on $\partial B_1(e_n)\cap B_1^c(0)$. In this set, we have
\begin{align*}
|\partial_\nu U_{\delta,0}| &\leq |\nabla U_{\delta,0}|= \alpha_n (n-2)\delta^{\frac{n-2}{2}}\frac{|x|}{(\delta^2+|x|^2)^\frac{n}{2}} =O(\delta^\frac{n-2}{2}),\\
|\partial_\delta \partial_\nu U_{\delta,0}| &\leq |\nabla \partial_\delta U_{\delta,0}|\leq C\delta^\frac{n-4}{2}\frac{|x|}{(\delta^2+|x|^2)^\frac{n}{2}}+ C\delta^{\frac{n-2}{2}}\frac{|x|}{(\delta^2+|x|^2)^\frac{n+2}{2}}=O(\delta^\frac{n-4}{2}).
\end{align*}
and, by using \eqref{A6},
\begin{align*}
\left|\delta^{\frac{4-n}{2}}\partial_\nu \left[ \varphi_0\left(\frac{x}{\delta}\right) \right]\right| &\leq \delta^{\frac{2-n}{2}}  \left|\nabla \varphi_0\left(\frac{x}{\delta}\right)\right| \leq \frac{C\delta^{\frac{2-n}{2}}  }{\left(1+\left|\frac{x}{\delta}\right|\right)^{n-2}}=O(\delta^\frac{n-2}{2}).\\
\left|\partial_\delta \left(\delta^{\frac{4-n}{2}}\partial_\nu \left[ \varphi_0\left(\frac{x}{\delta}\right) \right]\right)\right| &\leq \delta^{-\frac{n}{2}}\left|\nabla \varphi_0\left(\frac{x}{\delta}\right)\right| + \delta^{-\frac{n+2}{2}}\left|D^2\varphi_0\left(\frac{x}{\delta}\right)\right|=O(\delta^\frac{n-4}{2}).
\end{align*}
Therefore, $z_\delta=  O\left(\delta^\frac{n-2}{2}\right)$ and $\partial_\delta z_\delta=  O\left(\delta^\frac{n-4}{2}\right) $ on  $\partial B_1(e_n) \cap  B_1^c(0)$.
 
\smallbreak

\noindent Step 2. Next we focus on the (complementary) set $\partial B_1(e_n)\cap B_1(0)$, which can be described through the identity 
\begin{align}\label{rho}
x_n= \rho(x'): = 1-\sqrt{1-|x'|^2}=\frac{|x'|^2}{2}+O(|x'|^3)\qquad \text{ as $|x'|\to 0$}.
\end{align}
In particular, this shows that the principal curvatures for the unitary ball at any point of its boundary are $k_i=1$ for $i=1,\ldots,n-1$. Moreover, the exterior unitary normal on this part of the boundary is given by the expression
\begin{align}\label{nu}
 \nu(x)=\frac{( \nabla \rho,-1 )}{\sqrt{1+|\nabla \rho|^2}}=\frac{(\frac{x'}{\sqrt{1-|x'|^2}},-1 )}{\sqrt{1+\frac{|x'|^2}{1-|x'|^2}}}=(x',-\sqrt{1-|x'|^2}).
\end{align}
Combining  \eqref{rho} with \eqref{nu}, for $x\in \partial B_1(e_n)\cap B_1(0)$ we have
\begin{align}
 \partial_\nu U_{\delta,0}(x)
 &=\partial_\nu \alpha_n \frac{\delta^{\frac{n-2}{2}}}{(\delta^2+|x|^2)^{\frac{n-2}{2}}}=-\alpha_n (n-2)\delta^{\frac{n-2}{2}}\frac{\langle x,\nu \rangle}{(\delta^2+|x|^2)^\frac{n}{2}} \nonumber \\
 &=-\alpha_n(n-2)\delta^{\frac{n-2}{2}}\frac{|x'|^2-x_n\sqrt{1-|x'|^2}}{(\delta^2+|x|^2)^\frac{n}{2}}  \nonumber \\
  &=-\alpha_n(n-2)\delta^{\frac{n-2}{2}}\frac{|x'|^2-\sqrt{1-|x'|^2}+1-|x'|^2}{(\delta^2+|x|^2)^\frac{n}{2}} \nonumber  \\
  &=-\alpha_n(n-2)\delta^{\frac{n-2}{2}}\frac{1-\sqrt{1-|x'|^2}}{(\delta^2+|x|^2)^\frac{n}{2}} \nonumber  \\
  &=- \frac{\alpha_n(n-2)}{2}\frac{\delta^{\frac{n-2}{2}}|x'|^2}{(\delta^2+|x|^2)^\frac{n}{2}} + O\left(	\frac{\delta^{\frac{n-2}{2}}|x'|^3}{(\delta^2+|x|^2)^\frac{n}{2}}\right)  \nonumber  \\
&  - \frac{\alpha_n(n-2)}{2}\frac{\delta^{\frac{n-2}{2}}|x'|^2}{(\delta^2+|x'|^2)^\frac{n}{2}} + O\left(	\frac{\delta^{\frac{n-2}{2}}|x'|^3}{(\delta^2+|x'|^2)^\frac{n}{2}}\right),\label{eq:s1}
\end{align}
where in the last equality we have used the fact that 
\[
\left|\frac{1}{(\delta^2+|x|^2)^\frac{n}{2}}-\frac{1}{(\delta^2+|x'|^2)^\frac{n}{2}}  \right|\leq \frac{n}{2}\frac{x_n^2}{(\delta^2+|x'|^2)^\frac{n+2}{2}}=O\left(\frac{|x'|^4}{(\delta^2+|x'|^2)^{n+2}}  \right)
\]
Let $\eta_\delta:\overline{B_1(e_n)}\to\R$ be given by
\begin{align*}
\eta_\delta(x)=\delta^{-\frac{n-4}{2}}\varphi_0\left(\frac{x}{\delta}\right)\qquad \text{ for }\delta>0.
\end{align*}
By \eqref{phieq}, for $x\in \partial B_1(e_n)\cap B_1(0)$,
 \begin{align}
  \partial_\nu \eta_\delta(x)
  &=\nabla \eta_\delta(x)\cdot \nu(x)\\
  &=\partial_{-e_n}\eta_\delta(x',0)+((\nabla \eta_\delta(x)-\nabla \eta_\delta(x',0))\cdot \nu(x)+\nabla \eta_\delta(x',0)\cdot (\nu(x)+e_n))\\
  &=- \frac{\alpha_n(n-2)}{2}\frac{\delta^{\frac{n-2}{2}}|x'|^2}{(\delta^2+|x'|^2)^\frac{n}{2}}+ O\left(\frac{\delta^\frac{n-2}{2}}{(\delta+|x'|)^{n-3}}\right), \label{eq:s2}
  \end{align}
as $\delta\to 0$, since, by \eqref{A6} and \eqref{nu},
 \begin{align*}
\delta^{\frac{n-2}{2}}| \nabla \eta_\delta(x',0)\cdot (\nu(x)+e_n))| &\leq |\nabla \varphi_0(\frac{x'}{\delta},0)| \, |\nu(x)+e_n| \\
			& \leq \frac{C|(x',1-\sqrt{1-|x'|^2})|}{(1+|\frac{x'}{\delta}|)^{n-2}}=O\left(\frac{\delta^{n-2}|x'|}{(\delta+|x'|)^{n-2}}\right)
 \end{align*}
 while, by \eqref{A6} and \eqref{rho}, there exist $t_1,\ldots, t_n\in (0,1)$ such that
  \begin{align}
\delta^{\frac{n-2}{2}} |\nabla \eta_\delta(x)-\nabla \eta_\delta(x',0)|&=|\nabla \varphi_0(\frac{x}{\delta})-\nabla\varphi_0(\frac{x'}{\delta},0)| \\
		&=\left|\left( \frac{\partial^2 \varphi_0}{\partial x_1\partial x_n} \left(\frac{x'}{\delta},t_1\frac{x_1}{\delta}\right),\ldots, \frac{\partial^2 \varphi_0}{\partial x_n^2} \left(\frac{x'}{\delta},t_n\frac{x_n}{\delta}\right) \right)  \right| \frac{|x_n|}{\delta}\\
		&\leq \frac{C|x_n|}{\delta(1+|\frac{x'}{\delta}|)^{n-1}}=O\left(\frac{\delta^{n-2}|x'|^2}{(\delta+|x'|)^{n-1}}\right). \label{eq:s3}
 \end{align}  
From and we deduce that
\[
z_\delta=  O\left(\frac{\delta^\frac{4-n}{2}}{(1+|\frac{x'}{\delta}|)^{n-3}}\right) \text{ on } \partial B_1(e_n)\cap B_1(0).
\]
As for the first asymptotic estimate in \eqref{eq:asymptotics_aux2}, it is enough to follow the previous steps, taking the derivative in \eqref{eq:s1} and \eqref{eq:s2} with respect to $\delta$ in and estimating accordingly. One should use the fact that
\[
\frac{\delta^\frac{n-2}{2}}{(\delta+|x'|)^{n-2}}=O\left(\frac{\delta^\frac{n-4}{2}}{(\delta+|x'|)^{n-3}}\right)
\] 
 and $|D^3\varphi_0(x)|\leq C/(1+|x|)^n$ (the last one being used in the estimates corresponding to \eqref{eq:s3}).
\end{proof}

For the pure Neumann problem in a ball, there is an explicit Green's function \cite{WirthGreen}. In particular, if $u$ is a solution of 
\begin{align*}
    -\Delta  u = 0\quad \text{ in }B,\qquad  \partial_\nu u=\varphi\qquad \text{ on }\partial B,
\end{align*}
then 
\begin{align*}
    u(x)=\int_{\partial B}G(x,y)\varphi(y)\, dy,
\end{align*}
where 
\begin{align}\label{G}
    G(x,y)=c_1|x-y|^{2-n}+c_2\int_0^{|y|}\frac{x\cdot \frac{y}{|y|}-\frac{1}{s}}{|sx-\frac{y}{|y|}|^n}+\frac{1}{s}\, ds-c_3|x|^2
\end{align}
for some explicit positive constants $c_1$, $c_2,$ and $c_3.$  Here, the second term can be decomposed in simpler explicit expressions, see \cite{WirthGreen}.  The function $G$ is bounded by a multiple of the fundamental solution; this follows from direct computations using the explicit formula \eqref{G} or from \cite[Proposition 9 in Appendix A]{DRWbook} (which also covers general smooth domains), see also \cite[Lemma 3.1]{WR05}.  In particular, there is $C>0$ such that
\begin{align}\label{Ceq}
|G(x,y)|\leq C|x-y|^{2-n}\qquad \text{ for all }x\in B,\ y\in\partial B,\ x\neq y.
\end{align}

For the proof of Lemma \ref{exp:lem}, we use the following estimate.

\begin{lemma}\label{WY:lem}
 Let $\sigma\in(0,n-2)$, then there is $C>0$ such that
 \begin{align*}
     \int_{\R^{n-1}}\frac{1}{|x-y|^{n-2}(1+|y|)^{1+\sigma}}\, dy \leq \frac{C}{(1+|x|)^{\sigma}}\qquad \text{ for all } x\in \R^n_+.
 \end{align*}
Moreover, when $\sigma=0$ and for a ball $B_{1/\delta}(0)\subset \R^{n-1}$,
 \[
 \int_{B_{1/\delta}(0)} \frac{1}{|x-y|^{n-2}(1+|y|)}\, dy=O(\log \delta) \qquad \text{ as } \delta\to 0.
 \]
\end{lemma}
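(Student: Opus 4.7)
The plan is a standard dyadic decomposition of the domain of integration according to the size of $|y|$ relative to $|x|$, exploiting the pointwise kernel bound $|x-y|^{-(n-2)}$ on each piece. Here $y\in\R^{n-1}$ is identified with $(y,0)\in\R^n$, so that $|x-y|^2=|x'-y|^2+x_n^2\geq |x'-y|^2$; this means every bound I derive will be uniform in $x_n\geq 0$, since the degenerate case $x_n=0$ is already the dominant one.

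For the first estimate I first dispose of the bounded regime $|x|\leq 2$, where $(1+|x|)^{-\sigma}$ is bounded below and I only need $I(x):=\int_{\R^{n-1}}|x-y|^{-(n-2)}(1+|y|)^{-(1+\sigma)}\,dy$ to be uniformly bounded: splitting at $|y|=2$, the inner piece is controlled by local integrability of $|x'-y|^{-(n-2)}$ on $\R^{n-1}$ (since $n-2<n-1$), and the tail by $|x-y|\geq |y|/2$, which gives an integrand bounded by $C|y|^{-(n-1+\sigma)}$ which is integrable. For $|x|>2$ I decompose $\R^{n-1}$ into three pieces: (i) $|y|\leq |x|/2$, where $|x-y|\geq |x|/2$ produces a contribution $C|x|^{-(n-2)}\int_0^{|x|/2}\rho^{n-2}(1+\rho)^{-(1+\sigma)}\,d\rho\leq C|x|^{-\sigma}$ (integrability of $\rho^{n-3-\sigma}$ uses $\sigma<n-2$, and the antiderivative grows as $\rho^{n-2-\sigma}$); (ii) $|y|\geq 2|x|$, where $|x-y|\geq |y|/2$ gives a tail integral $\int_{2|x|}^\infty \rho^{-(1+\sigma)}\,d\rho=C|x|^{-\sigma}$; and (iii) the annulus $|x|/2\leq |y|\leq 2|x|$, where $(1+|y|)\sim |x|$ can be pulled out and a translation $z=y-x'$ reduces the remaining integral to $\int_{|z|\leq 3|x|}|z|^{-(n-2)}\,dz=C|x|$, giving contribution $C|x|\cdot|x|^{-(1+\sigma)}=C|x|^{-\sigma}$. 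Summing yields $I(x)\leq C|x|^{-\sigma}\leq C'(1+|x|)^{-\sigma}$.

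For the second estimate with $\sigma=0$ on $B_{1/\delta}(0)$, the identical three-region splitting applies, but now only region (ii) generates a new logarithmic factor: restricted to $2|x|\leq |y|\leq 1/\delta$ it produces
\[
C\int_{2|x|}^{1/\delta}(1+\rho)^{-1}\,d\rho\leq C\log(1+1/\delta)=O(|\log\delta|),
\]
and the region is simply empty when $|x|>1/(2\delta)$. Regions (i) and (iii) still contribute $O(1)$ uniformly in $x$: for (i) one checks that $\int_0^{|x|/2}\rho^{n-2}(1+\rho)^{-1}\,d\rho=O(|x|^{n-2})$ for $|x|\geq 1$ (cancelling the $|x|^{-(n-2)}$ prefactor) and $O(|x|^{n-1})$ for $|x|<1$; for (iii) the argument is unchanged, producing $C|x|\cdot|x|^{-1}=C$. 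Adding the three contributions gives the claimed $O(|\log\delta|)$.

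The main technical point, and the only place where some care is needed, is the annulus estimate in region (iii): one must bound $\int_{|x|/2\leq |y|\leq 2|x|}|x-y|^{-(n-2)}\,dy$ by $C|x|$ uniformly in $x$, including the degenerate case $x_n=0$ where the integrand is genuinely singular at $y=x'$. This is resolved via the translation $z=y-x'$, the inequality $|x-y|\geq |z|$, and the identity $\int_{|z|\leq R}|z|^{-(n-2)}\,dz=\omega_{n-1}R$ in $\R^{n-1}$, which again exploits $n-2<n-1$. Everything else reduces to routine integral splittings, so I expect no further serious obstacles.
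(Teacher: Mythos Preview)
Your proof is correct and follows essentially the same region-splitting strategy as the paper (which in turn follows \cite{WY10}). The only cosmetic difference is that the paper decomposes $\R^{n-1}$ into $B_d(0)$, $B_d(x)$ and their complement (with $d=|x|/2$), whereas you use the three shells $\{|y|\le |x|/2\}$, $\{|x|/2\le |y|\le 2|x|\}$, $\{|y|\ge 2|x|\}$; the estimates on each piece are the same, and your handling of the singular annulus via $|x-y|\ge |x'-y|$ and $\int_{|z|\le R}|z|^{2-n}\,dz=CR$ in $\R^{n-1}$ is exactly the mechanism the paper uses on $B_d(x)$.
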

\begin{proof}
The proof follows closely the argument in \cite[Lemma B.2]{WY10}.  We include a proof for the reader's convenience. Let $x\in \R^n_+$, $\sigma\in(0,n-2)$, $d=\frac{1}{2}|x|>0$, and let $B_d(0)\subset \R^{n-1}$. Using polar coordinates,
\begin{align*}
\int_{B_d(0)}\frac{1}{|x-y|^{n-2}(1+|y|)^{1+\sigma}}\, dy
\leq \frac{C}{d^{n-2}}\int_{B_d(0)}\frac{1}{(1+|y|)^{1+\sigma}}\, dy
\leq \frac{C}{d^{n-2}} d^{n-2-\sigma}=\frac{C}{d^\sigma}
\end{align*}
and 
\begin{align*}
\int_{B_d(x)}\frac{1}{|x-y|^{n-2}(1+|y|)^{1+\sigma}}\, dy
\leq \frac{C}{d^{1+\sigma}}\int_{B_d(0)}|y|^{2-n}\, dy
= \frac{C}{d^{1+\sigma}}d=\frac{C}{d^\sigma}.
\end{align*}

Next, let $y\in \R^{n-1}\backslash (B_d(0)\cup B_d(x))$, then
\begin{align*}
|x-y|\geq \frac{1}{2}|x|,\qquad |y|\geq \frac{1}{2}|x|.
\end{align*}

If $|y|\geq 2|x|$, then $|x-y|\geq |y|-|x|\geq \frac{1}{2}|y|$; therefore,
\begin{align*}
    \frac{1}{|x-y|^{n-2}(1+|y|)^{1+\sigma}}
    \leq \frac{C}{|y|^{n-2}(1+|y|)^{1+\sigma}}.
\end{align*}

If $|y|\leq 2|x|$, then 
\begin{align*}
    \frac{1}{|x-y|^{n-2}(1+|y|)^{1+\sigma}}
    \leq \frac{C}{|x|^{n-2}(1+|y|)^{1+\sigma}}
    \leq \frac{C}{|y|^{n-2}(1+|y|)^{1+\sigma}}.
\end{align*}

As a consequence,
\begin{align*}
    \frac{1}{|x-y|^{n-2}(1+|y|)^{1+\sigma}|}\leq \frac{C}{|y|^{n-2}(1+|y|)^{1+\sigma}}\qquad \text{ for all }y\in \R^{n-1}\backslash (B_d(0)\cup B_d(x)).
\end{align*}

Thus,
\begin{align*}
    \int_{\R^{n-1}\backslash (B_d(0)\cup B_d(x))}\frac{1}{|x-y|^{n-2}(1+|y|)^{1+\sigma}}\, dy
    &\leq \int_{\R^{n-1}\backslash (B_d(0)\cup B_d(x))}\frac{C}{|y|^{n-2}(1+|y|)^{1+\sigma}}\, dy\\
    &\leq C\int_d^\infty\frac{r^{n-2}}{r^{n-2}(1+r)^{1+\sigma}}\, dr=\frac{C}{d^\sigma}.
\end{align*}

If $\sigma=0$, the only difference in the proof is the next step: we use $B_{1/\delta}(0)\backslash (B_d(0)\cup B_d(x))$ instead of $\R^{n-1}\backslash (B_d(0)\cup B_d(x))$, obtaining that
\begin{align*}
    \int_{B_{1/\delta}(0)\backslash (B_d(0)\cup B_d(x))}\frac{1}{|x-y|^{n-2}(1+|y|)}\, dy
    &\leq C\int_d^{1/\delta}\frac{1}{r}\, dr=O(\log \delta).
\end{align*}
\end{proof}

\begin{proof}[Proof of Lemma \ref{exp:lem}]
We argue as \cite[Lemma A.1]{RW05}, with some modifications due to the fact that we project the difference of two bubbles and because our differential operator does not have a linear term\footnote{The operator $-\Delta+\mu$ with $\mu>0$ is considered in \cite[Lemma A.1]{RW05}, which with Neumann boundary conditions has important differences with respect to the case $\mu=0$.}.
Let us write

 \begin{align*}
 \zeta_\delta(x)=PW_{\delta}(x)-W_{\delta}(x)+\delta^{-\frac{n-4}{2}}\Big(\varphi_0\Big(\frac{e_n-x}{\delta}\Big)-\varphi_0\Big(\frac{e_n+x}{\delta}\Big)\Big)\quad \text{ for } x\in {B}=B_1(0),
 \end{align*}
which solves
\[
-\Delta \zeta_\delta=0 \text{ in } {B},\qquad  \partial_\nu \zeta_\delta= z_{1,\delta} + z_{2,\delta} \text{ on } \partial {B}, \qquad \int_{B} \zeta_\delta=0,
\]
where
\begin{align*}
z_{1,\delta}(x)= \partial_\nu\left[\delta^\frac{4-n}{2} \varphi_0\left(\frac{e_n-x}{\delta}\right)- U_{\delta,e_n}(x)\right],\qquad
z_{2,\delta}(x)= \partial_\nu \left[ U_{\delta,-e_n}(x) -\delta^\frac{4-n}{2}\varphi_0\left(\frac{e_n+x}{\delta}\right)\right].
\end{align*}

Let $\tau_1(x)=e_n-x$, which satisfies $\tau_1(B_1(0))=B_1(e_n)$, $\tau_1(0)=e_n$, $\tau_1(e_n)=0$. Moreover, for $x\in \partial B_1(0)$, we denote by $\nu(x)$ the exterior unitary normal on $\partial B_1(0)$ at $x$, and by $\nu(\tau_1(x))$ the exterior unitary normal on $\partial B_1(e_n)$ at $\tau_1(x)$. Note that $\nu(x)=-\nu(\tau_1(x))$. Let 
\[
\psi_\delta(y):= \delta^{\frac{4-n}{2}}\varphi_0\left(\frac{y}{\delta}\right)-U_{\delta,0}(y),\qquad \text{ for } y\in B_1(e_n).
\] Then
 \begin{align*}
z_{1,\delta}(x)= \partial_\nu \left[\psi_\delta(\tau_1(x))\right]=\nabla \psi_\delta \left[(\tau_1(x))\right]\cdot \nu (x)= \nabla \psi_\delta (\tau_1(x))\cdot \nu (\tau_1(x)).
 \end{align*}
 Hence, by \eqref{eq:asymptotics_aux1} in Lemma \ref{lemma:asymptotics_aux},
 \begin{align*}
z_{1,\delta}=  O\left(\frac{\delta^\frac{4-n}{2}}{(1+|\frac{x'}{\delta}|)^{n-3}}\right) \text{ on } \partial B_1(0)\cap B_1(e_n),\qquad z_{1,\delta}=  O\left(\delta^\frac{n-2}{2}\right) \text{ on } \partial B_1(0) \cap  B_1^c(e_n).
\end{align*}
By using instead the isometry $\tau_2(x)=x+e_n=x-(-e_n)$, we see in an analogous way that
\begin{align*}
z_{2,\delta}=  O\left(\frac{\delta^\frac{4-n}{2}}{(1+|\frac{x'}{\delta}|)^{n-3}}\right) \text{ on } \partial B_1(0)\cap B_1(-e_n),\qquad z_{2,\delta}=  O\left(\delta^\frac{n-2}{2}\right) \text{ on } \partial B_1(0) \cap  B_1^c(-e_n).
\end{align*}

Combining the previous estimates, we conclude that 
\begin{equation}\label{eq:estimate_x'}
\partial_\nu \zeta_\delta(x)=O\left(\frac{\delta^\frac{4-n}{2}}{(1+|\frac{x'}{\delta}|)^{n-3}}\right)\text{ in } \partial B_1(0)\cap B_1(\pm e_n),\qquad  \partial_\nu \zeta_\delta(x)=O(\delta^\frac{n-2}{2}) \text{ in } B_1(0)\cap B_1^c(\pm e_n)
\end{equation}
and, by the expansion $\eqref{rho}$, actually
\[
\partial_\nu \zeta_\delta(x)=O\left(\frac{\delta^\frac{4-n}{2}}{(1+\frac{|x\mp e_n|}{\delta})^{n-3}}\right)\text{ in } \partial B_1(0)\cap B_1(\pm e_n),\qquad  \partial_\nu \zeta_\delta(x)=O(\delta^\frac{n-2}{2}) \text{ in } B_1(0)\cap B_1^c(\pm e_n).
\]
In particular, 
\begin{align}\label{zest}
\partial_\nu \zeta_\delta(x)=O\left(\frac{\delta^\frac{4-n}{2}}{(1+\frac{|x- e_n|}{\delta})^{n-3}}+  \frac{\delta^\frac{4-n}{2}}{(1+\frac{|x+ e_n|}{\delta})^{n-3}}\right)\text{ in } \partial B_1(0).    
\end{align}

We claim that 
\begin{equation}\label{eq:inner_estimates}
\zeta_\delta(x)=O\left(\frac{\delta^\frac{4-n}{2}}{(1+\frac{|x- e_n|}{\delta})^{n-3}}+  \frac{\delta^\frac{4-n}{2}}{(1+\frac{|x+ e_n|}{\delta})^{n-3}}\right) \text{ in } B_1(0),
\end{equation}
 which, combined with the first estimate in \eqref{A6} for $\varphi_0$, yields the same estimate for $|PW_{\delta}(x)-W_{\delta}(x)|$. This way, \eqref{eq:estimate_PW-W} in Lemma \ref{exp:lem} is proved.

To show \eqref{eq:inner_estimates}, let $\Omega_\delta:=\delta^{-1}(e_n-B_1)$ and let $\hat \zeta_\delta(x) := \delta^{\frac{n-2}{2}}\zeta_\delta(e_n-\delta x)$ for $x\in \Omega_\delta$. By \eqref{zest}, we have that 
\begin{equation}
\partial_\nu \hat \zeta_\delta(x)=O\left(\frac{\delta^2}{(1+|x-2\delta^{-1}e_n|)^{n-3}}+  \frac{\delta^2}{(1+|x|)^{n-3}}\right)\text{ in } \partial \Omega_\delta.
\end{equation}

If $G_\delta$ is the Green's function for the rescaled ball $\Omega_\delta:=\delta^{-1}(e_n-B_1)$, then
\begin{align*}
    G_\delta(x,y)=G(e_n-\delta x,e_n-\delta y)\delta^{n-2}
    \qquad \text{ for }x\in \Omega_\delta,\ y\in \partial \Omega_\delta,\ y\neq x.
\end{align*}
In the following we use $C$ to denote possibly different positive constants independent of $\delta$.

By \eqref{Ceq}, there is $C>0$ such that, for $x\in \Omega_\delta$,
\begin{align*}
\hat \zeta_\delta(x)=\int_{\partial \Omega_\delta} G_\delta(x,y)\partial_\nu \hat \zeta_\delta(y)\, dy
\leq C\int_{\partial \Omega_\delta}|x-y|^{2-n}\partial_\nu\hat \zeta_\delta(y)\, dy.
\end{align*}
Let
\begin{align*}
U_+&=\partial \Omega_\delta\cap B_\frac{1}{\delta}(0),\qquad
U_-=\partial \Omega_\delta\cap B_\frac{1}{\delta}\left(\frac{2e_n}{\delta}\right),\qquad
U_0=\partial \Omega_\delta\backslash (\omega_+\cup \omega_-),
\end{align*}
then $\partial \Omega_\delta = U_+\cup U_- \cup U_0.$  We estimate each of these subdomains separately. 

Note that
\begin{align}
\int_{U_0}|x-y|^{2-n}\partial_\nu\hat \zeta_\delta(y)\, dy
&\leq C\delta^{n-1}\int_{U_0}|x-y|^{2-n}\, dy
=C\int_{\delta U_0}|x-\delta^{-1}z|^{2-n}\, dz\notag\\
&\leq C\delta^{n-2} \sup_{w\in B}\int_{\partial B}|w-z|^{2-n}\, dz=O(\delta^{n-2}).\label{w0}
\end{align}
Moreover, if $x\in U_0$, then $2\delta^{-1}>|x|$, which implies that $\frac{1}{1+|x|}>\frac{\delta}{2+\delta}$, therefore,
\begin{align*}
 \frac{\delta}{(1+|x|)^{n-3}}>\delta\frac{\delta^{n-3}}{(2+\delta)^{n-3}}=\frac{\delta^{n-2}}{(2+\delta)^{n-3}},
\end{align*}
and thus
\begin{align*}
    \delta^{n-2}\leq  C\left(\frac{\delta}{(1+|x-2\delta^{-1}e_n|)^{n-3}}+ \frac{\delta}{(1+|x|)^{n-3}}\right)\qquad \text{ for } x\in U_0.
\end{align*}

Next we do the estimate for $U_+$ (the estimate for $U_-$ is analogous and also follows by symmetry). For $x\in \Omega_\delta$,
\begin{align}
&\int_{U_+}|x-y|^{2-n}\partial_\nu \hat\zeta_\delta(y)\, dy
\leq C\int_{U_+}|x-y|^{2-n}\frac{\delta^2}{(1+|y|)^{n-3}}\, dy
= C\int_{U_+}|x-y|^{2-n}\frac{\delta^2}{(1+|y|)^{1+(n-4)}}\, dy.\label{w1}
\end{align}
Then, by Lemma \ref{WY:lem},
\begin{align}
\int_{U_+}|x-y|^{2-n}\frac{\delta^2}{(1+|y|)^{1+(n-4)}}\, dy
\leq \frac{C\delta^2(\log \delta)^\tau}{(1+|x|)^{n-4}},\label{w2}
\end{align}
where $\tau=1$ if $n=4$, and $\tau=0$ if $n\geq 5$.
By \eqref{w0}, \eqref{w1}, and \eqref{w2}, we obtain that $\hat\zeta_\delta=O(\delta^2 (\log \delta)^\tau)$, which implies $\zeta_\delta=O(\delta^\frac{6-n}{2}(\log \delta)^\tau)$. On the other hand, since $\delta<\frac{2}{1+|y|}$ for $y\in U_+$, again by Lemma \ref{WY:lem} (since $n\geq 4$) we have
\begin{align}
\int_{U_+}|x-y|^{2-n}\frac{\delta^2}{(1+|y|)^{1+(n-4)}}\, dy
\leq C\int_{U_+}|x-y|^{2-n}\frac{\delta}{(1+|y|)^{1+(n-3)}}\, dy\leq  \frac{C\delta}{(1+|x|)^{n-3}},\label{w3}
\end{align}

Estimates \eqref{w0}, \eqref{w1}, and \eqref{w3} yield \eqref{eq:inner_estimates}. For the results regarding $\partial_\delta PW_\delta$, the proofs follow precisely the previous reasoning, based this time on the boundary estimates \eqref{eq:asymptotics_aux2} from Lemma \ref{lemma:asymptotics_aux}.
\end{proof}

\begin{rmk}\label{rmk:phi0_generalOmega}
If, instead of $B$, we consider a general bounded smooth domain ${\Omega}$ with the symmetries \eqref{eq:assumption_Omega}, then we may assume without loss of generality that  $e_n=(1,0,\ldots, 0)\in \partial \Omega$ is a point of positive mean curvature. We let $\varphi_0$ be the solution of
\begin{align}\label{eq:phi0_generalOmega}
-\Delta \varphi_0 = 0 \text{ in }\R^n_+,\qquad
  \frac{\partial \varphi_{0}}{\partial x_n}
  =\alpha_n\frac{n-2}{2}\sum_{j=1}^{n-1}\frac{\rho_j x_j^2}{(1+|x'|^2)^\frac{n}{2}} \text{ on }\partial \R^n_+,\qquad
 \varphi_0\to 0  \text{ as }|x|\to \infty.
\end{align}
This function satisfies  \eqref{A6}, and the expansion of Lemma \ref{exp:lem} holds true, with a similar proof. See \cite[Lemma A.1]{RW05} for a similar expansion related to positive solutions for a similar Neumann problem with a potential. Using this expansion, it is possible to adapt the proofs of Theorems \ref{thm:main:ball} and \ref{thm:main:sub} to the case of other symmetric domains. Some small changes are highlighted in Remarks \ref{rmk:generaldomain} and \ref{rmk:last}.
\end{rmk}

\section{Asymptotic estimates for \texorpdfstring{$L^q$}{} norms of bubbles}

In this appendix we collect several expansion for $L^q$ norms of the bubbles $U_{\delta,\xi}$ in different domains. We start with the following auxiliary statement.

\begin{lemma}\label{lemma:TaylorInequalities} For every $q>1$ there exists $C>0$ such that
\[
\left||a+b|^q-|a|^q\right|\leq C (|a|^{q-1}|b|+|b|^q)\qquad \forall a,b\in \R.
\]
Moreover, for $g(t):=|t|^{q-1}t$ with $q\geq 1$, there exists $C>0$ such that
\[
|g(a+b)-g(a)|\leq C (|a|^{q-1}b+|b|^q)\qquad \forall a,b\in \R
\]
while, for all $a,b\in\R$,
\[
|g(a+b)-g(a)-g'(a)b|\leq \begin{cases}
C (|a|^{q-2}b^2+|b|^q) & \text{ if } q\geq 2,\\
C|b|^q & \text{ if } 1<q<2.
\end{cases}
\]
Finally, for every $q>1$ and $\gamma\in (0,1)$ there exists $C>0$ such that
\begin{equation}\label{eq:last_estimate}
\left||a+b|^{q}\log|a+b|-|a|^{q}\log|a|\right|\leq C (|a|^{q-1-\gamma}|b|+|a|^{q-1+\gamma}|b|+|b|^{q-\gamma}+|b|^{q+\gamma})\qquad \forall a,b\in \R.
\end{equation}
\end{lemma}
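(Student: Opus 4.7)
All four inequalities share the same flavour: a pointwise control on $F(a+b)-F(a)$ (or a higher-order Taylor remainder) where $F\in\{|\cdot|^q,\,g,\,h\}$ with $h(t):=|t|^q\log|t|$. My approach will be uniform: split on whether $|b|\leq\tfrac12|a|$ (near-diagonal regime: mean value theorem, $|a+sb|\asymp|a|$ along the segment) or $|b|>\tfrac12|a|$ (far-from-diagonal: triangle inequality, with $|a|$ and $|a+b|$ both bounded by a multiple of $|b|$). The case $a=0$ is always trivial and absorbed into the far-from-diagonal case.

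For the first inequality I apply the MVT to $t\mapsto|t|^q$, which is $C^1$ on $\R$ since $q>1$, with derivative of size $q|t|^{q-1}$; this yields $C|a|^{q-1}|b|$ when $|b|\leq|a|/2$. When $|b|>|a|/2$ the direct bound $||a+b|^q-|a|^q|\leq(|a|+|b|)^q+|a|^q\leq C|b|^q$ is enough. The second inequality is the identical argument applied to $g$ (which is $C^1$ on $\R$ for $q\geq1$, with $g'(t)=q|t|^{q-1}$).

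For the third inequality, the far-from-diagonal case is again immediate: each of $|g(a+b)|$, $|g(a)|$, and $|g'(a)b|$ is $\leq C|b|^q$. In the near-diagonal regime the treatment splits according to $q$. For $q\geq 2$, $g$ is $C^2$ away from $0$ with $|g''(t)|\leq C|t|^{q-2}$, so the integral form of Taylor's remainder
\[
g(a+b)-g(a)-g'(a)b=\int_0^1(1-s)g''(a+sb)b^2\,ds
\]
gives $\leq C|a|^{q-2}b^2$. For $1<q<2$, the key point is that $t\mapsto|t|^{q-1}$ is globally $(q-1)$-Hölder (since $q-1\in(0,1)$), so rewriting the remainder as $\int_0^1(g'(a+sb)-g'(a))b\,ds$ yields $\leq C|b|^{q-1}\cdot|b|=C|b|^q$ uniformly in $a,b$, and the case split is not even needed.

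The logarithmic inequality is the only genuinely new step and is where I expect the main bookkeeping. The elementary lemma
\[
|\log|t||\leq C_\gamma(|t|^{-\gamma}+|t|^\gamma),\qquad t\neq 0,
\]
(proved by distinguishing $|t|\leq 1$ and $|t|\geq 1$) is the central device: applied once it gives $|t|^q|\log|t||\leq C(|t|^{q-\gamma}+|t|^{q+\gamma})$ and applied with the $q-1$ exponent it gives
\[
|h'(t)|\leq C|t|^{q-1}(1+|\log|t||)\leq C(|t|^{q-1-\gamma}+|t|^{q-1+\gamma}).
\]
In the far-from-diagonal case I use $|h(a+b)-h(a)|\leq|h(a+b)|+|h(a)|$ with $|a|,|a+b|\leq 3|b|$ and the first estimate to produce $C(|b|^{q-\gamma}+|b|^{q+\gamma})$. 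In the near-diagonal case the MVT together with the second estimate and $|a+sb|\asymp|a|$ gives $|h(a+b)-h(a)|\leq C|b|(|a|^{q-1-\gamma}+|a|^{q-1+\gamma})$. The sum of the two regimes produces exactly the four terms on the right-hand side, completing the proof.
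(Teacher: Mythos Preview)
Your proposal is correct and close in spirit to the paper's argument. For the first three inequalities the paper simply invokes ``Taylor expansion with Lagrange remainder'', and your near/far-diagonal split is an explicit implementation of exactly this. The one genuine point of departure is the case $1<q<2$ of the third inequality: the paper rescales by $|b|$ to reduce the claim to the boundedness of $x\mapsto\big||x+1|^{q-1}(x+1)-|x|^{q-1}x-q|x|^{q-1}\big|$ on $\R$, checked by continuity and decay at infinity, whereas you use the global $(q-1)$-H\"older continuity of $t\mapsto|t|^{q-1}$ inside the integral remainder $\int_0^1(g'(a+sb)-g'(a))b\,ds$. Your route is arguably cleaner since it avoids any case split. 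For the logarithmic estimate both proofs are the same: mean value theorem on $h(t)=|t|^q\log|t|$, the elementary bound $|\log|t||\leq C_\gamma(|t|^{-\gamma}+|t|^{\gamma})$, and then control of the intermediate point in terms of $|a|$ and $|b|$; you make the near/far split explicit while the paper absorbs it into the final chain of inequalities.
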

\begin{proof}
 The proof of the first three statements follows directly from a Taylor expansion with Lagrange remainder. As for the fourth (the case $1<q<2$), it is equivalent to proving that the function
 \[
 h(x):=||x+1|^{q-1}(x+1)-|x|^{q-1}x-q|x|^{q-1}| 
 \]
 is bounded in $\R$. Since $h$ is continuous in $\R$, one just needs to check it is bounded at infinity. For $x>0$ large, we have, by Taylor expansion, for some $\xi\in(x,x+1)$ that
 \[
 h(x)=q(q-1)|\xi|^{q-2}\leq \frac{q(q-1)}{|x|^{2-q}}\to 0 \quad \text{ as } x\to \infty, \]
 and similarly of $x<0$ large, hence the claim follows.
 Finally, for \eqref{eq:last_estimate} we take $i(x)=|a|^{q}\log|a|$ and have:
 \[
 i(a+b)-i(a)=i'(s)b,\quad \text{ for some } s \text{ between $a$ and $b$}.
 \]
 Then
 \begin{align*}
 |i'(s)b &|\leq |s|^{q-2}(|s|+|s\log |s||) |b|\leq c |s|^{q-2}(|s|+|s|^{1-\gamma}+|s|^{1+\gamma}) |b|\\
 			&\leq c (|s|^{q-1-\gamma}+|s|^{p-1+\gamma})|b|\leq C(|a|^{q-1-\gamma}|b|+|a|^{q-1+\gamma}|b|+|b|^{q-\gamma}+|b|^{q+\gamma})
 \end{align*}
\end{proof}
For $\eps>0$ and $t\in \R$, let 
\begin{align*}
f_\eps(t):=|t|^{p-1+\eps}t. 
\end{align*}

\begin{lemma}\label{lemma:f_eps_expansion} For $t\in \R$,
\begin{align}\label{eq:f_eps(t)_expansion}
f_\eps(t)=|t|^{p-1}t+ \eps |t|^{p-1}t \log |t|+ \eps^2 r_{1,\eps}(t),
\end{align}
and
\begin{align*}
f'_\eps(t) = p|t|^{p-1}+ \eps (|t|^{p-1}+p|t|^{p-1} \log |t|)+ \eps^2 r_{2,\eps}(t),
\end{align*}
where 
$|r_{1,\eps}(t)|\leq  \frac{1}{2}\left(|t|^p+|t|^{p+ \eps}\right) (\log |t|)^2$
and 
$|r_{2,\eps}(t)|\leq  2(p+1)\left(|t|^{p-1}+|t|^{p-1+\eps}\right)(\log|t|+ (\log |t|)^2)$.
\end{lemma}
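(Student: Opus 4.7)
The plan is to view $f_\eps(t)$ as a perturbation of $f_0(t)$ via the identity $f_\eps(t)=|t|^{\eps}f_0(t)=e^{\eps \log|t|}f_0(t)$ (defined as $0$ at $t=0$), and then apply Taylor's theorem with the Lagrange remainder to the entire function $s\mapsto e^s$ at $s=\eps\log|t|$.

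First I would apply Taylor's formula $e^{s}=1+s+\tfrac{1}{2}e^{\xi}s^{2}$ for some $\xi$ between $0$ and $s$. Substituting $s=\eps\log|t|$ and multiplying by $f_0(t)=|t|^{p-1}t$ gives
\[
f_\eps(t)=|t|^{p-1}t+\eps|t|^{p-1}t\log|t|+\eps^{2}r_{1,\eps}(t),\qquad r_{1,\eps}(t)=\tfrac{1}{2}e^{\xi}(\log|t|)^{2}|t|^{p-1}t,
\]
which is \eqref{eq:f_eps(t)_expansion}. To bound $r_{1,\eps}(t)$, I would argue by cases: if $|t|\geq 1$ then $0\leq \xi\leq \eps\log|t|$ so $e^{\xi}\leq |t|^{\eps}$, while if $|t|<1$ then $\eps\log|t|\leq \xi\leq 0$ so $e^{\xi}\leq 1$. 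In either case $e^{\xi}\leq 1+|t|^{\eps}$, which yields the claimed bound $|r_{1,\eps}(t)|\leq \tfrac{1}{2}(|t|^{p}+|t|^{p+\eps})(\log|t|)^{2}$.

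Next, for the derivative, I would observe that, for $t\neq 0$,
\[
f'_\eps(t)=(p+\eps)|t|^{p-1+\eps}=(p+\eps)|t|^{p-1}e^{\eps\log|t|},
\]
(this holds for both signs of $t$, as one checks by differentiating the expressions $\pm|t|^{p+\eps}$ separately). Plugging in the same second-order Taylor expansion of $e^{\eps\log|t|}$, distributing, and collecting powers of $\eps$, I obtain
\[
f'_\eps(t)=p|t|^{p-1}+\eps\bigl(|t|^{p-1}+p|t|^{p-1}\log|t|\bigr)+\eps^{2}r_{2,\eps}(t),
\]
where, explicitly,
\[
r_{2,\eps}(t)=|t|^{p-1}\log|t|+\tfrac{1}{2}(p+\eps)e^{\xi}|t|^{p-1}(\log|t|)^{2}.
\]
Using again $e^{\xi}\leq 1+|t|^{\eps}$ and the assumption that $\eps$ is small (so $p+\eps\leq p+1$), one bounds
\[
|r_{2,\eps}(t)|\leq (|t|^{p-1}+|t|^{p-1+\eps})\bigl(|\log|t||+(p+1)(\log|t|)^{2}\bigr)\leq 2(p+1)(|t|^{p-1}+|t|^{p-1+\eps})\bigl(|\log|t||+(\log|t|)^{2}\bigr),
\]
which gives the required estimate.

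There is no serious obstacle here: the whole argument is essentially a Taylor expansion of the exponential together with a two-case bookkeeping for the sign of $\log|t|$. The only point that requires any care is the uniform control of the Lagrange factor $e^{\xi}$ by $1+|t|^{\eps}$, so that the remainders $r_{1,\eps}$ and $r_{2,\eps}$ are bounded by quantities that depend on $t$ but not on the particular intermediate value $\xi$; once this is in place, the stated inequalities follow by direct substitution.
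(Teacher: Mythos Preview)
Your argument is correct and is essentially the same as the paper's: the paper Taylor-expands $\eps\mapsto f_\eps(t)$ directly with Lagrange remainder at some $\sigma\in(0,\eps)$, obtaining $r_{1,\eps}(t)=\tfrac{1}{2}t|t|^{p-1+\sigma}(\log|t|)^{2}$ and then uses $|t|^{p+\sigma}\leq |t|^{p}+|t|^{p+\eps}$, which is exactly your expansion of $e^{\eps\log|t|}$ with $e^{\xi}=|t|^{\sigma}$ and the bound $e^{\xi}\leq 1+|t|^{\eps}$. The derivative part is handled in the paper by the phrase ``the expansion for $f'_\eps(t)$ is similar''; your explicit computation fills this in correctly.
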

\begin{proof}
For every $t\in \R$, we make the expansion around $\eps=0$: there exists $\sigma\in (0,\eps)$ such that
\begin{align*}
f_\eps(t) &=f_0(t)+ \eps\frac{\partial}{\partial \eps} f_\eps(t)|_{\eps=0}  + \frac{\eps^2}{2}\frac{\partial^2}{\partial \eps^2} f_\eps(t) |_{\eps=\sigma}\\
	      &=|t|^{p-1}t+  \eps |t|^{p-1} t \log |t| + \eps^2 t|t|^{p-1+\sigma}(\log |t|)^2,
\end{align*}
and use the estimate $|t|^{p+\sigma}\leq |t|^p+|t|^{\eps}$. The expansion for $f'_\eps(t)$ is similar.
\end{proof}

\begin{lemma}\label{lemma:LpnormBubble} Let $\alpha_n:=[n(n-2)]^\frac{n-2}{4}$. For $\xi\in \R^n$ and $\delta>0$, recall that
\[
U_{\delta,\xi}(x)=\alpha_n \frac{\delta^{\frac{n-2}{2}}}{(\delta^2+|x-\xi|^2)^{\frac{n-2}{2}}},\quad \text{ and define } \quad V_{\delta,\xi}(x):=\frac{\delta^\frac{n-2}{2}}{(\delta+|x-\xi|)^{n-3}}.
\]For $R>0$, as $\delta\to 0^+$ we have
\[
\int_{B_R(\xi)} U_{\delta,\xi}^q=\begin{cases}
O(\delta^{q\frac{n-2}{2}}) & \text{ if } 0<q<\frac{n}{n-2},\\
O(\delta^\frac{n}{2}|\log \delta|) & \text{ if } q=\frac{n}{n-2},\\
O(\delta^{n-q\frac{n-2}{2}}) & \text{ if } \frac{n}{n-2}<q<\infty,\\
\end{cases}
\]
and
\[
\int_{B_R(\xi)} V_{\delta,\xi}^q=\begin{cases}
O(\delta^{q\frac{n-2}{2}}) & \text{ if } 0<q<\frac{n}{n-3},\\
O(\delta^\frac{n(n-2)}{2(n-3)}|\log \delta|) & \text{ if } q=\frac{n}{n-3},\\
O(\delta^{n-q\frac{n-4}{2}}) & \text{ if } \frac{n}{n-3}<q<\infty.
\end{cases}
\]
In particular, if $\xi\in \overline{{B}}$, $|U_{\delta,\xi}|_{2^*+O(\delta)}=O(1)$ as $\delta\to 0$.
\end{lemma}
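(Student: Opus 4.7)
The plan is to reduce both statements to one-dimensional integrals by passing to spherical coordinates centered at $\xi$, and then to extract the asymptotic behavior through the standard rescaling $r = \delta s$. By translation invariance we may take $\xi=0$. For the bubble itself, we obtain
\begin{equation*}
\int_{B_R(0)} U_{\delta,0}^q \;=\; \omega_n \alpha_n^q \int_0^R \frac{\delta^{q(n-2)/2} \, r^{n-1}}{(\delta^2+r^2)^{q(n-2)/2}}\, dr \;=\; \omega_n \alpha_n^q \, \delta^{n-q\frac{n-2}{2}} \int_0^{R/\delta} \frac{s^{n-1}}{(1+s^2)^{q(n-2)/2}}\, ds,
\end{equation*}
and for $V_{\delta,0}$ the analogous substitution yields
\begin{equation*}
\int_{B_R(0)} V_{\delta,0}^q \;=\; \omega_n \, \delta^{n-q\frac{n-4}{2}} \int_0^{R/\delta} \frac{s^{n-1}}{(1+s)^{q(n-3)}}\, ds.
\end{equation*}

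Next I would analyze each rescaled integral according to whether its integrand decays integrably at infinity, is logarithmic, or is divergent. For the $U$-case, the integrand decays like $s^{n-1-q(n-2)}$ at infinity, so the critical threshold is $q = n/(n-2)$. When $q>n/(n-2)$ the integral converges to a finite constant, leaving the prefactor $\delta^{n-q(n-2)/2}$; when $q = n/(n-2)$ the integral contributes $\log(R/\delta) = O(|\log\delta|)$ while the prefactor becomes $\delta^{n/2}$; when $q<n/(n-2)$ the leading divergence $(R/\delta)^{n-q(n-2)}$ combines with the prefactor to yield $\delta^{q(n-2)/2}$, which exactly absorbs the $R$-dependence. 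An entirely parallel trichotomy around the threshold $q=n/(n-3)$ handles $V_{\delta,0}$, where the integrand decays like $s^{n-1-q(n-3)}$. The routine bookkeeping of exponents is the only place where care is needed; no estimate is delicate.

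For the final assertion, note that since $\xi \in \overline{B}$ we have $B \subset B_2(\xi)$, so it suffices to apply the case $q>n/(n-2)$ of the first estimate with $R=2$. Writing $q = 2^* + \mu$ with $\mu = O(\delta)$, we compute
\begin{equation*}
n - q\tfrac{n-2}{2} \;=\; n - \tfrac{2n}{n-2}\cdot \tfrac{n-2}{2} - \mu\tfrac{n-2}{2} \;=\; -\mu\tfrac{n-2}{2},
\end{equation*}
hence $|U_{\delta,\xi}|_q^q = O(\delta^{-\mu(n-2)/2})$ and therefore $|U_{\delta,\xi}|_q = O(\delta^{-\mu(n-2)/(2q)})$. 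Since $\mu = O(\delta)$, the exponent $-\mu(n-2)/(2q)$ is itself $O(\delta)$, and using $\delta^{O(\delta)} = \exp(O(\delta \log \delta)) = 1 + o(1)$ gives the desired $|U_{\delta,\xi}|_{2^*+O(\delta)} = O(1)$.

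The main (and only) obstacle is simply keeping the exponents straight across the three regimes in the $V$-case, since the factor $(\delta+r)^{n-3}$ produces slightly less transparent arithmetic than $(\delta^2+r^2)^{(n-2)/2}$; but the computations are mechanical and reduce to verifying that $q(n-3) = n$ at the critical exponent $q = n/(n-3)$ and tracking the algebraic identity $n - q(n-4)/2 - (n - q(n-3)) = q(n-2)/2$ in the divergent regime.
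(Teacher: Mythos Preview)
Your proof is correct and follows essentially the same approach as the paper: both reduce to the rescaling $y=(x-\xi)/\delta$ (equivalently $r=\delta s$ in polar coordinates) and then split into the three regimes according to the integrability of $s^{n-1}(1+s)^{-q(n-3)}$ (respectively $s^{n-1}(1+s^2)^{-q(n-2)/2}$) at infinity. The paper cites \cite{PistoiaSoave18} for the $U_{\delta,\xi}$ estimate and does not spell out the ``In particular'' claim, so your write-up is in fact more complete on those two points.
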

\begin{proof}
The proof of the statement for $U_{\delta,\xi}$ can be found for instance in \cite[Lemma A.3]{PistoiaSoave18}. As for $V_{\delta,\xi}$, by making the change of variables $x=\xi+\delta y$, we see that
\[
\int_{B_R(\xi)} V_{\delta,\xi}^q=\delta^{n-q\frac{n-4}{2}}\int_{B_\frac{R}{\delta}(0)} \frac{1}{(1+|y|)^{q(n-3)}}\, dy.
\]
If $q>n/(n-3)$, the last integral is bounded as $\delta\to 0$, hence $\int_{B_R(\xi)} V_{\delta,\xi}^q=O(\delta^{n-q\frac{n-4}{2}})$. In case $q\leq n/(n-3)$, by using generalized polar coordinates,
\begin{align*}
\int_{B_R(\xi)} V_{\delta,\xi}^q \leq  C\delta^{n-q\frac{n-4}{2}}\left(1+\int_1^\frac{R}{\delta} r^{n-1-q(n-3)}\, dr \right)= \begin{cases} O(\delta^{q\frac{n-2}{2}}) & \text{ if } q<\frac{n}{n-3}\\
O(\delta^\frac{n(n-2)}{2(n-3)}|\log \delta|) & \text{ if } q=\frac{n}{n-3}
\end{cases}
\end{align*}
\end{proof}

Recall that ${B}:=B_1(0)$, and denote 
\begin{align*}
\omega_+:= {B}\cap B_\frac{1}{2}(e_n)\qquad \text{ and }\qquad 
\omega_-:= {B}\cap B_\frac{1}{2}(-e_n).    
\end{align*}
In particular, 
\begin{align}\label{bd}
\delta^2+|x\mp e_n|^2\geq \delta^2+(\tfrac{1}{2})^2\geq \tfrac{1}{4}\qquad \text{for $x\in {B}\backslash \omega_\pm$}.
\end{align}
Namely,
\begin{align*}
|U_{\delta,\pm e_n}|\leq \alpha_n \frac{\delta^{\frac{n-2}{2}}}{(\delta^2+|x\mp e_n|^2)^{\frac{n-2}{2}}}
\leq \alpha_n 2^{n-2}\delta^{\frac{n-2}{2}}\qquad \text{for $x\in {B}\backslash \omega_\pm$}.
\end{align*}

We follow the ideas in \cite{AdimurthiMancini} to prove the following result. Since we use a different notation and for the ball the computations are explicit, we include the proof for completeness.

\begin{lemma}\label{AB:lem}
If $n\geq 4$, then
\begin{align*}
\int_{B} U_{\delta,e_n}^{p+1}=\int_{B} U_{\delta,-e_n}^{p+1}=
\frac{\mathfrak{A}}{2}-\frac{\mathfrak{B}}{2}\delta +o(\delta)
\qquad \text{as }\delta\to 0^+,
\end{align*}
where $\mathfrak{A}$ and $\mathfrak{B}$ are given in \eqref{fraks}.
\end{lemma}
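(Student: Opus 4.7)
The identity $\int_{B}U_{\delta,e_n}^{p+1}=\int_{B}U_{\delta,-e_n}^{p+1}$ is immediate from the symmetry $x\mapsto -x$ of $B$ together with the relation $U_{\delta,-e_n}(x)=U_{\delta,e_n}(-x)$, so it suffices to compute $\int_{B}U_{\delta,e_n}^{p+1}$. Since $\int_{\R^n}U_{\delta,e_n}^{p+1}\,dx=\int_{\R^n}U_{1,0}^{p+1}\,dy=\mathfrak{A}$ by scale-translation invariance, the task reduces to expanding $\int_{\R^n\setminus B}U_{\delta,e_n}^{p+1}$. Performing the change of variables $y=(x-e_n)/\delta$ and using $(p+1)(n-2)/2 = n$ turns this into $\int_{\R^n\setminus D_\delta}U_{1,0}^{p+1}\,dy$, where
\[
D_\delta:=\tfrac{1}{\delta}(B-e_n)=B_{1/\delta}(-e_n/\delta)=\{y\in\R^n : y_n<-\tfrac{\delta}{2}|y|^2\}
\]
is the rescaled open ball, internally tangent to the hyperplane $\partial\R^n_+$ at the origin. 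Hence $\R^n\setminus D_\delta=\R^n_+\sqcup S_\delta$ (up to a null set) with the thin crescent $S_\delta:=\{y\in\R^n_- : -\tfrac{\delta}{2}|y|^2\leq y_n\leq 0\}$. Since $\int_{\R^n_+}U_{1,0}^{p+1}=\mathfrak{A}/2$ by evenness of $U_{1,0}$, the lemma reduces to
\[
\int_{S_\delta}U_{1,0}^{p+1}\,dy=\tfrac{\delta}{2}\mathfrak{B}+o(\delta) \qquad \text{as }\delta\to 0^+.
\]

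To compute this, apply Fubini in coordinates $(y',y_n)$, writing $y_n=-t$ with $t\geq 0$. A direct computation shows that at fixed $y'$ with $|y'|\leq 1/\delta$, the slice $\{t\geq 0:(y',-t)\in S_\delta\}$ equals $[0,t_-(y')]\cup[t_+(y'),\infty)$, where $t_\pm(y')=(1\pm\sqrt{1-\delta^2|y'|^2})/\delta$; for $|y'|>1/\delta$ every $t\geq 0$ is admissible. Using the decay bound $U_{1,0}^{p+1}\leq C|y|^{-2n}$ and polar coordinates, the ``far'' pieces — namely $\{|y'|>1/\delta\}$ and $\{t\geq t_+(y'),\,|y'|\leq 1/\delta\}$ (where $|y|\geq 1/\delta$ in both) — each contribute $O(\delta^n)=o(\delta)$.

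The main term is
\[
M_\delta := \alpha_n^{p+1}\int_{|y'|\leq 1/\delta}\int_0^{t_-(y')}\frac{dt\,dy'}{(1+|y'|^2+t^2)^n}.
\]
For a cutoff $R>0$ to be chosen, split the $y'$-integral into $\{|y'|\leq R\}$ and $\{R<|y'|\leq 1/\delta\}$. On the bounded piece, apply the Taylor expansions $t_-(y')=\frac{\delta}{2}|y'|^2+O(\delta^3|y'|^4)$ and $(1+|y'|^2+t^2)^{-n}=(1+|y'|^2)^{-n}+O(t^2(1+|y'|^2)^{-n-1})$, valid uniformly since $t\leq t_-(y')=O(\delta R^2)$; this yields
\[
\frac{\delta}{2}\alpha_n^{p+1}\int_{|y'|\leq R}\frac{|y'|^2}{(1+|y'|^2)^n}\,dy'+O(\delta^3),
\]
where the assumption $n\geq 4$ ensures that $\int_{\R^{n-1}}|y'|^4(1+|y'|^2)^{-n}\,dy'$ is finite, so that the $O(\delta^3)$ error is uniform in $R$. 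On the outer piece, the crude bounds $t_-(y')\leq\delta|y'|^2$ and $(1+|y'|^2+t^2)^{-n}\leq(1+|y'|^2)^{-n}$ give a contribution bounded by $\delta\,\varepsilon(R)$, where $\varepsilon(R):=\alpha_n^{p+1}\int_{|y'|>R}|y'|^2(1+|y'|^2)^{-n}\,dy'\to 0$ as $R\to\infty$. Sending first $\delta\to 0^+$ for fixed $R$ and then $R\to\infty$ gives $M_\delta=\tfrac{\delta}{2}\mathfrak{B}+o(\delta)$. The main obstacle is the careful bookkeeping of these two-parameter error terms; once the geometric decomposition $\R^n\setminus D_\delta=\R^n_+\sqcup S_\delta$ is in hand, the rest is standard and $n\geq 4$ is precisely what makes the argument uniform in the cutoff $R$.
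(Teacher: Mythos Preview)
Your argument is correct and uses the same underlying idea as the paper: after rescaling, one compares the ball to its tangent half-space at $e_n$, the $\R^n_+$-piece supplies $\mathfrak{A}/2$, and the thin boundary layer produces the $\tfrac{\delta}{2}\mathfrak{B}$ correction (with $n\geq 4$ needed precisely so that $\int_{\R^{n-1}}|y'|^4(1+|y'|^2)^{-n}\,dy'<\infty$). The execution, however, differs in two places. First, the paper localizes to a small neighborhood $\omega_+=B\cap B_{1/2}(e_n)$ and uses the reflection symmetry of $B_{1/2}(e_n)$ about the tangent hyperplane $\{x_n=1\}$ to extract $\mathfrak{A}/2$, whereas you work with the global complement $\R^n\setminus B$ and get $\mathfrak{A}/2$ from $\R^n_+$ directly; your route is cleaner here. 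Second, for the crescent integral the paper performs the substitution $y_n=(1-x_n)/\sqrt{\delta^2+|x'|^2}$ which handles the $x_n$-dependence exactly before rescaling $x'=\delta y'$, so no cutoff parameter is needed; you instead Taylor-expand in $t$ and introduce a cutoff $R$ with a two-parameter limit $\delta\to 0$ then $R\to\infty$. Both work; the paper's substitution avoids the extra bookkeeping, while your approach is more elementary. One small terminological quibble: $S_\delta$ is not literally a ``thin crescent'' globally (it contains all of $\R^n_-$ outside the large ball $D_\delta$), but you correctly dispose of the far pieces as $O(\delta^n)$, so this does not affect the argument.
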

\begin{proof}
Note that
\begin{align}\label{eq:Lp+1norm_aux1}
\int_{\omega_+} U_{\delta,e_n}^{p+1}= \underbrace{\frac{1}{2}\int_{B_\frac{1}{2}(e_n)} U_{\delta,e_n}^{p+1}}_{(I)}- \underbrace{\int_{\Sigma} U_{\delta,e_n}^{p+1}}_{(II)},
\end{align}
where 
\begin{align*}
\Sigma:=B^-_\frac{1}{2}(e_n)\setminus \omega_+,\qquad B^-_\frac{1}{2}(e_n)=\{y\in B_\frac{1}{2}(e_n)\::\: y_n<1\}. 
\end{align*}
 Let $\Delta:=\{x'\in \R^{n-1}:\ |x'|<\sqrt{15}/8\}$ be the projection of the set $\partial {B} \cap B_\frac{1}{2}(e_n)$ in the variables $x':=(x_1,\ldots, x_{n-1})$, so that $\Sigma=\{x'\in \Delta:\ \sqrt{1-|x'|^2}<x_n<1\}$.

On the one hand, we have
\begin{align*}
(I)=\frac{1}{2}\int_{\R^n} U_{\delta,e_n}^{p+1} - \frac{1}{2}\int_{\R^n\setminus B_\frac{1}{2}(e_n)}U_{\delta,e_n}^{p+1}=\frac{1}{2}\int_{\R^n} \frac{\alpha_n^{p+1}}{(1+|x|^2)^n}\, dx+O(\delta^n)=\frac{\mathfrak{A}}{2}+O(\delta^n).
\end{align*}
Likewise,
\begin{align*}
(II)=\int_{\Delta}\int_{\sqrt{1-|x'|^2}}^1\frac{\alpha_n^{p+1}\delta^n}{(\delta^2+|x-e_n|^2)^n}\, dx_ndx'.
\end{align*}
Using now the change of variables $y_n=\frac{1-x_n}{\sqrt{\delta^2+|x'|^2}}$, so that $\delta^2+|x-e_n|^2=(\delta^2+|x'|^2)(1+y_n^2)$, we see that
\[
(II)=\int_{\Delta} \frac{\alpha_n^{p+1}\delta^n }{(\delta^2+|x'|^2)^{n-\frac{1}{2}}}  \int_0^{\frac{1-\sqrt{1-|x'|^2}}{\sqrt{\delta^2+|x'|^2}}} \frac{1}{(1+y_n^2)^n}\, dy_ndx'.
\]
Observe that
\[
\int_0^s \frac{1}{(1+y_n^2)^n}\, dy_n=s+O(s^3)\quad  \text{ and }\quad  1-\sqrt{1-|x'|^2}=\frac{|x'|^2}{2} + O(|x'|^4)
\]
uniformly for $s\in \R$ and for $x'\in \Delta$, respectively. So
\begin{align*}
(II)&=\int_{\Delta} \frac{\alpha_n^{p+1}\delta^n}{(\delta^2+|x'|^2)^{n-\frac{1}{2}}}\left(\frac{1-\sqrt{1-|x'|^2}}{\sqrt{\delta^2+|x'|^2}} + O\left(\frac{1-\sqrt{1-|x'|^2}}{\sqrt{\delta^2+|x'|^2}}\right)^3\right)\, dx'\\
	&=\int_{\Delta} \frac{\alpha_n^{p+1}\delta^n}{(\delta^2+|x'|^2)^{n}} \left(\frac{|x'|^2}{2} + O(|x'|^4) \right) \,  dx'\\
	& \underset{(x'=\delta y')}{=} \int_{\Delta/\delta} \frac{\alpha_n^{p+1}\delta^{-1}}{(1+|y'|^2)^n}\left(\frac{\delta^2|y'|^2}{2}+O(\delta^4|y'|^4)\right)\, dy'\\
	&= \delta \frac{\alpha_n^{p+1}}{2} \int_{\Delta/\delta} \frac{|y'|^2}{(1+|y'|^2)^n}\, dy' + O(\delta^3) \int_{\Delta/\delta} \frac{|y'|^4}{(1+|y'|^2)^n}\, dy' \\
	&= \delta \frac{\alpha_n^{p+1}}{2} \int_{\R^{n-1}} \frac{|y'|^2}{(1+|y'|^2)^n}\, dy'  +  \delta \frac{\alpha_n^{p+1}}{2} \int_{\R^{n-1}\setminus (\Delta/\delta)} \frac{|y'|^2}{(1+|y'|^2)^n}\, dy'+ o(\delta)\\
	&= \delta \frac{\alpha_n^{p+1}}{2} \int_{\R^{n-1}} \frac{|y'|^2}{(1+|y'|^2)^n}\, dy' +o(\delta)=\frac{\mathfrak{B}}{2}\delta+o(\delta),
\end{align*}
where $O(\delta^3) \int_{\Delta/\delta} \frac{|y'|^4}{(1+|y'|^2)^n}\, dy'=o(\delta)$ since 
\begin{align*}
\int_{\Delta/\delta} \frac{|y'|^4}{(1+|y'|^2)^n}\, dy'\to \int_{\R^{n-1}}\frac{|y'|^4}{(1+|y'|^2)^n}\, dy' <\infty \quad \text{ for } n\geq 4.
\end{align*}

The claim now follows, since
\begin{align*}
\int_{{B}\backslash \omega_+} U_{\delta,e_n}^{p+1}
= \int_{{B}\backslash \omega_+} \frac{\delta^{n}}{(\delta^2+|x+e_n|^2)^{n}}
\leq |{B}| 4^{n}\delta^{n}.
\end{align*}
The proof for $U_{\delta,-e_n}$ follows by symmetry. 
\end{proof}

\begin{rmk}\label{rmk:generaldomain}
If, instead of $B$, we consider a general smooth bounded domain ${\Omega}$ and $\xi_0\in \partial {\Omega}$, then we would have the expansion
\[
\int_{\Omega} U_{\delta,\xi_0}^{p+1}=
\frac{\mathfrak{A}}{2}-H(\xi_0)\frac{\mathfrak{B}}{2}\delta +o(\delta)
\qquad \text{as }\delta\to 0^+,
\]
where $H(\xi_0)$ is the mean curvature of $\partial {\Omega}$ at $\xi_0$. The proof can be found in \cite{AdimurthiMancini}: up to a rotation and translation, we can assume that $\xi_0=0$ and that, for sufficiently small $R>0$,
\[
{\Omega}\cap B_R=\{(x',x_n):\ x_n>\rho(x')\},\quad \rho(x')=\sum_{j=1}^{n-1} \rho_j x_j^2+O(|x'|^3),\quad H(\xi_0)=\frac{2\sum_{j=1}^{n-1}\rho_j}{n-1}.
\]
With respect to the proof of the previous lemma, in the general case the quantity $(I)$ has the same expansion, while
\begin{align*}
(II)&=\int_\Delta \int_0^{\varphi(x')} \frac{\alpha_n^{p+1}\delta^n}{(\delta^2+|x-e_n|^2)^n}\, dx_n\, dx'=\int_\Delta \frac{\alpha_n^{p+1}\delta^n}{(\delta^2+|x'|^2)^n}\left(\sum_{j=1}^{n-1} \rho_j x_j^2+O(|x'|^3)\right)\, dx'\\
        &=\delta \alpha_n^{p+1} \sum_{j=1}^{n-1}\rho_j \int_\Delta \frac{|x'|^2}{(\delta^2+|x'|^2)^n}\, dx'+o(\delta)=H(0)\frac{\mathfrak{B}}{2}\delta+o(\delta).
\end{align*}
\end{rmk}

Next, we focus on the interaction between bubbles centered at different points. 
\begin{lemma}\label{inter:lem}
We have
\begin{align*}
    \int_{B} U_{\delta,e_n}U_{\delta,-e_n}^p
    =\int_{B} U_{\delta,-e_n}U_{\delta,e_n}^p
    =O(\delta^{n-2})\qquad \text{ as }\delta\to 0^+.
\end{align*}
\end{lemma}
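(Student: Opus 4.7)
The plan is to split the domain as $B = \omega_+ \cup \omega_- \cup (B\setminus(\omega_+\cup\omega_-))$ and estimate each piece separately. The heuristic is that two bubbles concentrated at a fixed distance apart (here $|e_n-(-e_n)|=2$) should interact at order $\delta^{n-2}$, with this order coming entirely from the region near $e_n$ in the integral $\int_B U_{\delta,-e_n} U_{\delta,e_n}^p$, where the exponent-$p$ bubble concentrates while the other is essentially constant.

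The leading contribution comes from $\omega_+$. For $x\in\omega_+$ one has $|x+e_n|\geq 3/2$, whence $U_{\delta,-e_n}(x)\leq C\delta^{(n-2)/2}$ uniformly. Pulling this factor out reduces the task to bounding $\int_{\omega_+} U_{\delta,e_n}^p$, and Lemma \ref{lemma:LpnormBubble} applied with $q=p$ (which satisfies $p>n/(n-2)$) gives this as $O(\delta^{n-p(n-2)/2})=O(\delta^{(n-2)/2})$. Multiplying yields precisely $O(\delta^{n-2})$ for this piece.

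The remaining pieces are subleading. On $\omega_-$, the symmetric bound $|x-e_n|\geq 3/2$ gives $U_{\delta,e_n}^p\leq C\delta^{(n+2)/2}$ uniformly; combined with $\int_{\omega_-} U_{\delta,-e_n}=O(\delta^{(n-2)/2})$ from Lemma \ref{lemma:LpnormBubble} with $q=1$, the contribution is $O(\delta^n)$. On $B\setminus(\omega_+\cup\omega_-)$, both bubbles are uniformly $O(\delta^{(n-2)/2})$ by \eqref{bd}, so the integrand is pointwise $O(\delta^n)$ on a set of bounded measure. Summing the three pieces yields the asserted bound. The second integral $\int_B U_{\delta,e_n} U_{\delta,-e_n}^p$ is treated either by the same decomposition with the roles of $\omega_+$ and $\omega_-$ swapped, or simply by invoking the reflection $x_n\mapsto -x_n$, which preserves $B$ and interchanges the two bubbles.

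This is a routine interaction estimate and I do not foresee any genuine obstacle. The only mild subtlety is picking the right exponent in Lemma \ref{lemma:LpnormBubble} on $\omega_+$ so that the two factors combine to exactly $\delta^{n-2}$, and verifying that the contributions from $\omega_-$ and from the intermediate region are of strictly higher order in $\delta$.
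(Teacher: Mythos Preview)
Your proposal is correct and follows essentially the same approach as the paper: split $B=\omega_+\cup\omega_-\cup(B\setminus(\omega_+\cup\omega_-))$, on each piece pull out the bubble that is uniformly $O(\delta^{(n-2)/2})$ (or its $p$-th power), and estimate the remaining one-bubble integral. The only cosmetic differences are that the paper treats the symmetric integral $\int_B U_{\delta,e_n}U_{\delta,-e_n}^p$ (so the dominant $O(\delta^{n-2})$ piece comes from $\omega_-$ rather than $\omega_+$), and it computes the one-bubble integrals by an explicit rescaling and radial estimate instead of citing Lemma~\ref{lemma:LpnormBubble}; your use of that lemma is a legitimate shortcut.
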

\begin{proof}
Note that
\begin{align*}
 \int_{B} U_{\delta,e_n}U_{\delta,-e_n}^p
& = \alpha_n^{p+1}\int_{B} \frac{\delta^{\frac{n-2}{2}}}{(\delta^2+|x-e_n|^2)^{\frac{n-2}{2}}}
 \left(\frac{\delta^{\frac{n-2}{2}}}{(\delta^2+|x+e_n|^2)^{\frac{n-2}{2}}}\right)^\frac{n+2}{n-2}\, dx\\
&= \alpha_n^{p+1}\int_{B} \frac{\delta^{n}}{(\delta^2+|x-e_n|^2)^{\frac{n-2}{2}}(\delta^2+|x+e_n|^2)^{\frac{n+2}{2}}}\, dx.
\end{align*}
Let $D:=\omega_+-e_n$. Then
\begin{align*}
&\int_{\omega_+} \frac{\delta^{n}}{(\delta^2+|x-e_n|^2)^{\frac{n-2}{2}}(\delta^2+|x+e_n|^2)^{\frac{n+2}{2}}}\, dx
\leq
2^{n+2}
\int_{D} \frac{\delta^{n}}{(\delta^2+|x|^2)^{\frac{n-2}{2}}}\, dx \\
& \leq  2^{n+2}
\int_{D/\delta} \frac{\delta^{n+2}}{(1+|y|^2)^{\frac{n-2}{2}}}\, dy \leq C 2^{n+2}
\delta^{n+2}\left(1
+
\int_{1}^\frac{1}{2\delta} \frac{r^{n-1}}{(1+r^2)^{\frac{n-2}{2}}}\ dr
\right)\\
&\leq C 2^{n+2}
\delta^{n+2}
\left(1
+
\int_{1}^\frac{1}{2\delta}r\ dr
\right)
=C 2^{n+2}
\delta^{n+2}\left(\frac{1}{2}
+\frac{1}{8\delta^2}
\right)=O(\delta^n).
\end{align*}
Reasoning in the same way,
\begin{align*}
\int_{\omega_-} \frac{\delta^{n}}{(\delta^2+|x-e_n|^2)^{\frac{n-2}{2}}(\delta^2+|x+e_n|^2)^{\frac{n+2}{2}}}\, dx\leq C2^{n-2}\delta^{n-2}\left(1+\int_1^{\frac{1}{2\delta}} \frac{r^{n-1}}{(1+r^2)^\frac{n+2}{2}}\, dr\right)=O(\delta^{n-2}),
\end{align*}
and
\[
\int_{{B} \backslash (\omega_+\cup \omega_-)} \frac{\delta^{n}}{(\delta^2+|x-e_n|^2)^{\frac{n-2}{2}}(\delta^2+|x+e_n|^2)^{\frac{n+2}{2}}}\, dx
\leq 
|{B}|4^n \delta^{n}=O(\delta^n). \qedhere
\]

\end{proof}

Recall that, by \eqref{phinull} and \eqref{A6}, for $x\in\R^n_+$,
\begin{align}\label{phiest}
\varphi_0(x)&:=\frac{\alpha_n}{\omega_{n}}\int_{\R^{n-1}}\frac{|y|^2}{(1+|y|^2)^\frac{n}{2}|x-y|^{n-2}}\ dy,\qquad |\varphi_0(x)|\leq \frac{C}{(1+|x|)^{n-3}}.
\end{align}

\begin{lemma}\label{I2:1} We have
\begin{align*}
\delta^{-\frac{n-4}{2}}\int_{B}\varphi_0\Big(\frac{e_n-x}{\delta}\Big)U_{\delta,-e_n}^p=\delta^{-\frac{n-4}{2}}\int_{B}\varphi_0\Big(\frac{e_n+x}{\delta}\Big)U_{\delta,e_n}^p
=O(\delta^{n-2})\qquad \text{ as }\delta\to 0.
\end{align*}
\end{lemma}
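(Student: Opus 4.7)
The two integrals are equal by the change of variables $x\mapsto -x$, which is an isometry of $B=B_1(0)$ and exchanges $U_{\delta,e_n}$ with $U_{\delta,-e_n}$ while simultaneously sending $(e_n-x)/\delta$ to $(e_n+x)/\delta$; so the plan is to bound only the second integral, call it $I$. The main point is a split $B=\omega_+\cup\omega_-\cup(B\setminus(\omega_+\cup\omega_-))$, on which $U_{\delta,e_n}^p$ and $\varphi_0((e_n+x)/\delta)$ have complementary good behavior: where the bubble is concentrated ($\omega_+$) the argument of $\varphi_0$ is of order $1/\delta$, and where $\varphi_0$ can be large ($\omega_-$) the bubble $U_{\delta,e_n}^p$ is uniformly of order $\delta^{(n+2)/2}$ by \eqref{bd}.

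I would then carry out three estimates using the pointwise decay $|\varphi_0(y)|\leq C(1+|y|)^{3-n}$ from \eqref{A6} and the computations in Lemma \ref{lemma:LpnormBubble}. On $\omega_+$, since $|x+e_n|\geq 3/2$, we have $|\varphi_0((e_n+x)/\delta)|\leq C\delta^{n-3}$, and Lemma \ref{lemma:LpnormBubble} (or a direct change of variables) gives $\int_{\omega_+}U_{\delta,e_n}^p\,dx=O(\delta^{(n-2)/2})$; multiplying through, the contribution to $I$ is $O\bigl(\delta^{-(n-4)/2}\cdot\delta^{n-3}\cdot\delta^{(n-2)/2}\bigr)=O(\delta^{n-2})$. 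On $\omega_-$ the pointwise bound $U_{\delta,e_n}^p\leq C\delta^{(n+2)/2}$ reduces matters to estimating $\int_{\omega_-}|\varphi_0((e_n+x)/\delta)|\,dx$; after the change of variables $z=x+e_n\in B_{1/2}(0)$, the integral $\int_0^{1/2} r^{n-1}(\delta+r)^{3-n}\,dr=O(1)$, giving a bound $O(\delta^{n-3})$ for that integral and thus $O(\delta^{-(n-4)/2}\cdot\delta^{(n+2)/2}\cdot\delta^{n-3})=O(\delta^n)$ for the total contribution. On $B\setminus(\omega_+\cup\omega_-)$ both factors are small: $U_{\delta,e_n}^p=O(\delta^{(n+2)/2})$ by \eqref{bd} and $|\varphi_0((e_n+x)/\delta)|=O(\delta^{n-3})$ since $|x+e_n|$ is bounded below, which again yields $O(\delta^n)$.

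Combining the three pieces, the dominant contribution comes from $\omega_+$ and is of order $\delta^{n-2}$, which is the claimed bound. There is no real obstacle here; the only point that requires a little care is making sure the integral $\int_0^{1/2}r^{n-1}(\delta+r)^{3-n}\,dr$ really is uniformly bounded as $\delta\to 0$ for every $n\geq 4$, which one checks by noting that the integrand is bounded above by $r^2$, giving an integral bounded by $1/24$ independently of $\delta$ and $n$.
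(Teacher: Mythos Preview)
Your proposal is correct and follows essentially the same route as the paper: the same decomposition $B=\omega_+\cup\omega_-\cup(B\setminus(\omega_+\cup\omega_-))$, the same pointwise decay \eqref{A6} for $\varphi_0$, and the same bubble estimates. The only cosmetic differences are that you bound the second integral (the paper bounds the first; these are exchanged by $x\mapsto -x$, so $\omega_+$ and $\omega_-$ swap roles), and on the dominant piece you invoke Lemma~\ref{lemma:LpnormBubble} for $\int_{\omega_+}U_{\delta,e_n}^p=O(\delta^{(n-2)/2})$ directly rather than redoing the rescaling as in the paper---both yield the same $O(\delta^{n-2})$.
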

\begin{proof}
Let $D:=\omega_+-e_n$. Using \eqref{bd} and \eqref{phiest},and making the change of variable $x\mapsto x-e_n$:
\begin{align*}
    &\delta^{-\frac{n-4}{2}}\int_{\omega_+}\varphi_0\Big(\frac{e_n-x}{\delta}\Big)U_{\delta,-e_n}^p
    =\delta^{-\frac{n-4}{2}}\alpha_n^p\int_{\omega_+}\varphi_0\Big(\frac{e_n-x}{\delta}\Big)
    \frac{\delta^{\frac{n+2}{2}}}{(\delta^2+|x+e_n|^2)^{\frac{n+2}{2}}}\\
    &\leq \alpha_n^p2^{n+2}\delta^{3}\int_{D}\varphi_0\Big(\frac{-x}{\delta}\Big)
    \leq \alpha_n^p2^{n+2}\delta^{n+3}\int_{\frac{1}{\delta}D}\frac{C}{(1+|x|)^{n-3}}\\
    &\leq C\alpha_n^p2^{n+2}\delta^{n+3}\omega_n
    \left(1
    +\int_{1}^\frac{1}{2\delta}r^{2}
    \right)
    =O(\delta^{n}),
\end{align*}
where we recall that $\omega_n$ denotes the measure of the unit sphere of $\R^n$. On the other hand, if $K:=\omega_-+e_n$, 
\begin{align*}
&\delta^{-\frac{n-4}{2}}\int_{\omega_-}\varphi_0\Big(\frac{e_n-x}{\delta}\Big)U_{\delta,-e_n}^p
=\delta^{1-n}\alpha_n^p\int_{K}\varphi_0\Big(\frac{2e_n-x}{\delta}\Big)
    \frac{1}{(1+|\frac{x}{\delta}|^2)^{\frac{n+2}{2}}}\\
&=\delta\alpha_n^p\int_{\frac{1}{\delta}K}\varphi_0\Big(\frac{2e_n}{\delta}-x\Big)\frac{1}{(1+|x|^2)^{\frac{n+2}{2}}}
\leq C \delta^{n-2}\alpha_n^p\left(\frac{2}{3}\right)^{n-3}\int_{\frac{1}{\delta}K}\frac{1}{(1+|x|^2)^{\frac{n+2}{2}}}
=O(\delta^{n-2}),
\end{align*}
since  $|2e_n\delta^{-1}-x|\geq 2\delta^{-1}-(2\delta)^{-1}=\frac{3}{2}\delta^{-1}$ for $x\in \delta^{-1}K$ and therefore, by \eqref{phiest},
\begin{align*}
|\varphi_0\left(\frac{2e_n}{\delta}-x\right)|\leq \frac{C}{(1+|2e_n\delta^{-1}-x|)^{n-3}}\leq C \left(\frac{2}{3}\right)^{n-3}\delta^{n-3},\qquad x\in \delta^{-1}K.
\end{align*}
Finally,
\begin{align*}
\delta^{-\frac{n-4}{2}}\int_{{B}\backslash(\omega_+\cup \omega_-)}\varphi_0\Big(\frac{e_n-x}{\delta}\Big)U_{\delta,-e_n}^p
\leq 
\left(\delta^{-\frac{n-4}{2}}\right)
\left(C2^{n-3}\delta^{n-3}\right)
\left(|{B}| 2^{n+2}\delta^{\frac{n+2}{2}}\alpha_n^p\right)=O(\delta^n),
\end{align*}
because, for $x\in {B}\backslash(\omega_+\cup \omega_-)$, we have  $|x\pm e_n|\geq \frac{1}{2}$ and
\begin{align}\label{phiest1}
\left|\varphi_0\left(\frac{e_n-x}{\delta}\right)\right|&\leq \frac{C}{(1+\frac{1}{2}\delta^{-1})^{n-3}}
\leq C2^{n-3}\delta^{n-3},\quad 
U_{\delta,-e_n}^p= \left(\frac{\delta}{\delta^2+|x+e_n|^2}\right)^\frac{n+2}{2}
\leq 2^{n+2} \delta^\frac{n+2}{2}
\end{align}
\end{proof}

\begin{lemma}\label{I2:2} As $\delta\to 0^+$,
\begin{align*}
\delta^{-\frac{n-4}{2}}\int_{B}\varphi_0\Big(\frac{e_n-x}{\delta}\Big)U_{\delta,e_n}^p=\delta^{-\frac{n-4}{2}}\int_{B}\varphi_0\Big(\frac{e_n+x}{\delta}\Big)U_{\delta,-e_n}^p
&=-\frac{n-2}{2}\alpha_n\delta\int_{\R^{n-1}}
 \frac{|y|^2}{(1+|y|^2)^{n-1}}+o(\delta)\\
 &=-\frac{n-2}{2}\mathfrak{C}\delta+o(\delta),
\end{align*}
where we recall that $\mathfrak{C}$ was introduced in \eqref{fraks}.
\end{lemma}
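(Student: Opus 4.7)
The proof plan rests on a blow-up change of variables centered at $e_n$ followed by an integration-by-parts identity that recognizes the Neumann data of $\varphi_0$.

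First, the equality of the two integrals in the statement follows from symmetry: the substitution $x \mapsto -x$ (together with $U_{\delta,-e_n}(-x) = U_{\delta,e_n}(x)$ and the invariance of $B$) maps one integral to the other. So I may focus on
\[
I_\delta := \delta^{-\frac{n-4}{2}}\int_{B}\varphi_0\Big(\tfrac{e_n-x}{\delta}\Big)U_{\delta,e_n}^p\,dx.
\]
I split $B = \omega_+ \cup (B\setminus \omega_+)$. On $B\setminus \omega_+$ we have $|x-e_n|\geq \tfrac12$, so (as in the previous lemma) $U_{\delta,e_n}^p = O(\delta^{(n+2)/2})$, while $|(e_n-x)/\delta|\geq 1/(2\delta)$ gives, via \eqref{A6}, $|\varphi_0((e_n-x)/\delta)| \leq C\delta^{n-3}$. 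Multiplying the three factors together and adjusting by $\delta^{-(n-4)/2}$ yields an $O(\delta^{n})$ contribution, which is $o(\delta)$ for $n\geq 4$.

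Second, on $\omega_+$ I perform the rescaling $y = (e_n-x)/\delta$, so $x = e_n-\delta y$, $dx = \delta^n\,dy$, and $|x-e_n|=\delta|y|$. Since $U_{\delta,e_n}^p(x) = \alpha_n^p\,\delta^{-(n+2)/2}(1+|y|^2)^{-(n+2)/2} = \delta^{-(n+2)/2}U_{1,0}^p(y)$, a direct bookkeeping of the powers of $\delta$ collapses the prefactor to a single power of $\delta$, giving
\[
\delta^{-\frac{n-4}{2}}\int_{\omega_+}\varphi_0\Big(\tfrac{e_n-x}{\delta}\Big)U_{\delta,e_n}^p\,dx = \delta\int_{\Omega_\delta^+}\varphi_0(y)\,U_{1,0}(y)^p\,dy,
\]
where $\Omega_\delta^+ = \{y\in\R^n : e_n-\delta y \in \omega_+\} \subset \R^n_+$ (note $y_n > \delta|y|^2/2 \geq 0$). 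As $\delta\to 0$, $\Omega_\delta^+$ invades $\R^n_+$; combining the decay $|\varphi_0(y)|\leq C(1+|y|)^{-(n-3)}$ from \eqref{A6} with $U_{1,0}^p(y) = O((1+|y|)^{-(n+2)})$ gives an integrable dominating function on $\R^n$, so dominated convergence yields
\[
\int_{\Omega_\delta^+}\varphi_0\,U_{1,0}^p\,dy = \int_{\R^n_+}\varphi_0\,U_{1,0}^p\,dy + o(1).
\]

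Third, I identify the limit integral via Green's identity on $\R^n_+$. Since $-\Delta U_{1,0} = U_{1,0}^p$ on all of $\R^n$, $\Delta\varphi_0 = 0$ in $\R^n_+$, and $\partial_{y_n}U_{1,0}\big|_{y_n=0}=0$ by evenness of $U_{1,0}$ in $y_n$, two integrations by parts collapse to a single boundary term:
\[
\int_{\R^n_+}\varphi_0\,U_{1,0}^p\,dy = \int_{\partial\R^n_+} U_{1,0}(y',0)\,\partial_\nu\varphi_0(y',0)\,dy'.
\]
Since $\nu = -e_n$ on $\partial\R^n_+$, the Neumann condition in \eqref{phieq} gives $\partial_\nu\varphi_0(y',0) = -\alpha_n\tfrac{n-2}{2}|y'|^2(1+|y'|^2)^{-n/2}$, and $U_{1,0}(y',0)=\alpha_n(1+|y'|^2)^{-(n-2)/2}$, so the integrand becomes $-\alpha_n^2\tfrac{n-2}{2}|y'|^2(1+|y'|^2)^{-(n-1)}$. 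Recognizing $\mathfrak{C}=\alpha_n\int_{\R^{n-1}}|y|^2(1+|y|^2)^{-(n-1)}dy$ yields the stated value. The main obstacle is justifying that the boundary bend ($y_n = \delta|y|^2/2$ versus $y_n = 0$) produces only an $o(\delta)$ error, which is why dominated convergence on the explicit domain $\Omega_\delta^+$ is the cleanest route and why the decay rate $(1+|y|)^{-(n-3)}$ for $\varphi_0$ is essential.
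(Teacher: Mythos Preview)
Your proof is correct and follows essentially the same route as the paper's: split off $B\setminus\omega_+$ by crude pointwise bounds, rescale on $\omega_+$ to obtain $\delta\int_{\Omega_\delta^+}\varphi_0\,U_{1,0}^p$, pass to $\R^n_+$ by dominated convergence, and then integrate by parts using $-\Delta U_{1,0}=U_{1,0}^p$, $\Delta\varphi_0=0$, and $\partial_{y_n}U_{1,0}|_{y_n=0}=0$ to reduce to the boundary Neumann data of $\varphi_0$. The only cosmetic differences are that you do the change of variables $y=(e_n-x)/\delta$ in one step (the paper shifts and scales separately) and you are more explicit about the dominated convergence and the domain $\Omega_\delta^+=\{|y|<\tfrac{1}{2\delta},\ y_n>\tfrac{\delta}{2}|y|^2\}$.
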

\begin{proof}
Let $D:=\omega_+-e_n$ and observe that, after a change of variables,
\begin{align}
  &\delta^{-\frac{n-4}{2}}\int_{\omega_+}\varphi_0\Big(\frac{e_n-x}{\delta}\Big)U_{\delta,e_n}^p   =\alpha_n^p\delta\int_{\frac{1}{\delta}D}
 \frac{\varphi_0(-x)}{(1+|x|^2)^{\frac{n+2}{2}}}
 =\alpha_n^p\delta\int_{\R^n_+}
 \frac{\varphi_0(x)}{(1+|x|^2)^{\frac{n+2}{2}}}+o(\delta)\notag\\
 &=\delta\int_{\R^n_+}\varphi_0(x)(-\Delta U_{1,0})+o(\delta)=\delta\int_{\partial\R^{n}_+}(\partial_\nu\varphi_0)U_{1,0}+o(\delta)\notag\\
 &=-\delta\int_{\R^{n-1}}
 \alpha_n\frac{n-2}{2}\frac{|y|^2}{(1+|y|^2)^\frac{n}{2}(1+|y|^2)^\frac{n-2}{2}}+o(\delta)
 =-\frac{n-2}{2}\mathfrak{C}\delta+o(\delta),\label{1}
\end{align}
where we have used the decay of $\varphi_0,U_{1,0}$ to integrate by parts, together with the fact that $\partial_\nu U_{1,0}=0$ on $\partial \R^n_+$.
On the other hand, using that $|x-e_n|\geq \frac{1}{2}$  for $x\in {B}\backslash\omega_+$,
we have that \eqref{phiest1} holds true and that $U_{\delta,e_n}\leq 2^{n+2}\delta^\frac{n+2}{2}$. Therefore,
\begin{align}
\delta^{-\frac{n-4}{2}}\int_{{B}\backslash\omega_+}\varphi_0\Big(\frac{e_n-x}{\delta}\Big)U_{\delta,e_n}^p  
\leq \left(C2^{n-3}\delta^{n-3}\right)\left(2^{n+2} \delta^\frac{n+2}{2}\right)=O(\delta^{n+\frac{n-4}{2}})=o(\delta).\label{2}
\end{align}
The final claim now follows by \eqref{1}, \eqref{2}, and by symmetry of the integrand functions.
\end{proof}

\begin{rmk}\label{rmk:last}
If, instead of $B$, we consider a general smooth bounded domain ${\Omega}$ with the symmetries \eqref{eq:assumption_Omega}, then we may assume without loss of generality that  $e_n=(1,0,\ldots, 0)\in \partial {\Omega}$ is a point of positive mean curvature.    Then similar versions of Lemmas \ref{I2:1} and \ref{I2:2} hold true for $\varphi_0$ as in \eqref{eq:phi0_generalOmega} (recall Remark \ref{rmk:phi0_generalOmega}), with the only difference that in the expansion of the second lemma one has $-\frac{n-2}{2}H(e_n)\mathfrak{C}\delta+o(\delta)$.
\end{rmk}

Next we collect some asymptotic estimates for integral normal of 
\[
W_\delta:=U_{\delta,e_n}-U_{\delta,-e_n} \quad \text{ and } \quad w_\delta:=P W_{\delta},
\]which for convenience we recall to be the solution of
\begin{align*}
-\Delta P W_{\delta}=-\Delta  U_{\delta,e_n}+\Delta  U_{\delta,-e_n}\ \hbox{in}\ {B},\qquad \partial_\nu P W_{\delta}=0\ \hbox{on}\ \partial{B}, \qquad \int\limits_{{B}} P W_{\delta}dx=0.
\end{align*}

\begin{lemma}\label{delta:lem}
We have, as $\delta\to 0^+$,
\begin{align*}
\left\|\delta \partial_\delta w_\delta\right\|^2=\left(\frac{n-2}{2}\right)^2\int_{\R^n}\frac{\alpha_n^{p+1}(|x|^2-1)^2}{(1+|x|^2)^{n+2}}\, dx + o(1).
\end{align*}
\end{lemma}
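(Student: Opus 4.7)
The key identity is $\delta\partial_\delta w_\delta = PZ_\delta$, where
\[
Z_\delta := \delta\partial_\delta W_\delta = \delta\partial_\delta U_{\delta,e_n} - \delta\partial_\delta U_{\delta,-e_n}.
\]
This holds because $P = K\circ(-\Delta)$ is linear and independent of $\delta$, and both the Neumann boundary condition and the zero-average normalization are preserved under $\delta\partial_\delta$. The Neumann condition and the equation $-\Delta PZ_\delta = -\Delta Z_\delta$ then give
\[
\|PZ_\delta\|^2 = \int_{B} PZ_\delta\,(-\Delta PZ_\delta) = \int_{B} PZ_\delta\,(-\Delta Z_\delta),\qquad -\Delta Z_\delta = p\sum_{\pm}(\pm 1)U_{\delta,\pm e_n}^{p-1}\delta\partial_\delta U_{\delta,\pm e_n}.
\]

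I will split $PZ_\delta = Z_\delta + (PZ_\delta - Z_\delta)$ and estimate each piece separately. For the projection error, Lemma \ref{exp:lem} (applied to $\delta\partial_\delta(PW_\delta - W_\delta)$) provides the pointwise bound
\[
|PZ_\delta - Z_\delta| \leq \frac{C\delta^{(n-2)/2}}{(\delta + |x-e_n|)^{n-3}} + \frac{C\delta^{(n-2)/2}}{(\delta + |x+e_n|)^{n-3}},
\]
while \eqref{pa U con U} yields $|{-}\Delta Z_\delta| \leq p(U_{\delta,e_n}^p + U_{\delta,-e_n}^p)$. Combining the two, and rescaling each of the four resulting cross-products via $y = (x\mp e_n)/\delta$, every contribution becomes an integrable weight in $y$ multiplied by a positive power of $\delta$. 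As a consequence,
\[
\Big|\int_B (PZ_\delta - Z_\delta)(-\Delta Z_\delta)\Big| = O(\delta) = o(1).
\]

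For the main term $\int_B Z_\delta(-\Delta Z_\delta)$, expansion into four products yields two diagonal terms (centered at $+e_n$ and at $-e_n$) and two cross terms (mixing $+e_n$ and $-e_n$). The cross terms contain one factor concentrated at $+e_n$ and another at $-e_n$ and are $O(\delta^{n-2})$ by an argument analogous to Lemma \ref{inter:lem}. The two diagonal terms are equal by the $x_n \to -x_n$ symmetry of ${B}$; each rescales via $y = (x-e_n)/\delta$, using $\delta\partial_\delta U_{\delta,e_n}(x) = \delta^{-(n-2)/2}V(y)$ and $U_{\delta,e_n}^{p-1}(x) = \delta^{-2}U_{1,0}^{p-1}(y)$, to
\[
\int_{(B-e_n)/\delta} p\,U_{1,0}^{p-1}(y)\,V(y)^2\,dy \xrightarrow[\delta\to 0^+]{} \int_{\R^n_-} p\,U_{1,0}^{p-1}V^2 = \frac{1}{2}\int_{\R^n} p\,U_{1,0}^{p-1}V^2,
\]
by dominated convergence (the limiting integrand is integrable on $\R^n$ and the domains $(B-e_n)/\delta$ exhaust the halfspace $\R^n_-$ as $\delta\to 0$) and the evenness of $U_{1,0}^{p-1}V^2$. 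Summing the two diagonal contributions and substituting the explicit formula $V(y) = \alpha_n\tfrac{n-2}{2}\tfrac{|y|^2-1}{(1+|y|^2)^{n/2}}$ together with $U_{1,0}^{p-1}(y) = \alpha_n^{p-1}(1+|y|^2)^{-2}$ (recall $(p-1)(n-2)/2 = 2$) produces the claimed right-hand side.

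The main obstacle is the careful bookkeeping of the various error terms --- projection error, cross interactions between the two bubbles, and tails of the rescaled integrals --- all of which must be shown to be $o(1)$. Each relies crucially on the sharp pointwise expansions from Lemma \ref{exp:lem} and Corollary \ref{cor:estimates_with_bubbles}; once these are in place, the passage to the limit in the main term is a straightforward dominated-convergence argument driven by the fact that $e_n\in\partial {B}$, so that the rescaled ambient domain converges to a halfspace.
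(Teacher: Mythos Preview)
Your proposal is correct and follows essentially the same approach as the paper: integrate by parts to obtain $\int_B PZ_\delta(-\Delta Z_\delta)$, split $PZ_\delta=Z_\delta+(PZ_\delta-Z_\delta)$, control the projection-error term via the derivative estimate \eqref{eq:estimate_derivative_PW-W} from Lemma~\ref{exp:lem}, discard the cross-bubble interactions as lower order, and rescale the two diagonal terms $p\int_B U_{\delta,\pm e_n}^{p-1}(\delta\partial_\delta U_{\delta,\pm e_n})^2$ to the halfspace integral. The only cosmetic difference is that you make the identity $\delta\partial_\delta w_\delta=PZ_\delta$ explicit at the outset, whereas the paper writes the integration by parts directly.
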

\begin{proof}
Integrating by parts,
\begin{align*}
 \left\|\delta \partial_\delta w_\delta\right \|^2&=-\delta^2 \int_{B} (\Delta \partial_\delta w_\delta) \partial_\delta w_\delta=
 \int_{B} \delta \partial_\delta\left(U_{\delta,e_n}^p-U_{\delta,-e_n}^p\right) \delta \partial_\delta w_\delta\\
 &=p\int_{B} (U_{\delta,e_n}^{p-1}(\delta \partial_\delta U_{\delta,e_n})^2 + U_{\delta,-e_n}^{p-1}(\delta \partial_\delta U_{\delta,-e_n})^2) + \psi(\delta),
\end{align*}
where
\begin{align*}
\psi(\delta)&=-2p \int_{B} (U_{\delta,e_n}^{p-1}+U_{\delta,e_n}^{p-1})(\delta \partial_\delta U_{\delta,e_n})(\delta \partial_\delta U_{\delta,-e_n})\\
&+p\int_{B} (U_{\delta,e_n}^{p-1}(\delta \partial_\delta U_{\delta,e_n})-U_{\delta,-e_n}^{p-1}(\delta \partial_\delta U_{\delta,-e_n}))(\delta \partial_\delta(w_\delta-W_\delta)).
\end{align*}
Observe that, by Lemma \ref{exp:lem} we have
\[
| \delta \partial_\delta (w_\delta-W_\delta)|\leq C(V_{\delta,e_n}+V_{\delta,-e_n}),
\]
where $V_{\delta,\xi}$ are as in Lemma \ref{lemma:LpnormBubble}. Taking in consideration \eqref{pa U con U}, we see that
\[
|\psi(\delta)|\leq C \int_{B} (U_{\delta,e_n}^{p-1}+U_{\delta,-e_n}^{p-1})(U_{\delta,e_n}U_{\delta,-e_n}+V_{\delta,e_n}+V_{\delta,-e_n})=o(1),
\]
by Lemma \ref{lemma:LpnormBubble}. Moreover,
\begin{align*}
\int_{B} U_{\delta,e_n}^{p-1}(\delta \partial_\delta U_{\delta,e_n})^2&= \left(\frac{n-2}{2}\right)^2\int_{B} \frac{\alpha_n^{p+1}\delta^n(|x-e_n|^2-\delta^2)^2}{(\delta^2+|x-e_n|^2))^{n+2}}\, dx= \left(\frac{n-2}{2}\right)^2\int_{\frac{{B}-e_n}{\delta}} \frac{\alpha_n^{p+1}(|y|^2-1)^2}{(1+|y|^2)^{n+2}}\, dy\\
&=\left(\frac{n-2}{2}\right)^2\int_{\R^n_+}\frac{\alpha_n^{p+1}(|y|^2-1)^2}{(1+|y|^2)^{n+2}}\, dy + o(1).
\end{align*}
The conclusion for the term 
\[
\int_{B} U_{\delta,-e_n}^{p-1}(\delta \partial_\delta U_{\delta,-e_n})^2
\]
and the conclusion of the lemma follows by symmetry.
\end{proof}

\begin{lemma}\label{Lemma:I_1aux}
For $\gamma\geq 0$  small we have, as $\delta\to 0^+$:
\[
|f_0(w_\delta) - f_0(W_\delta)|_{\frac{(p+1)(1+\gamma)}{p}}=\begin{cases}
O(\delta|\log \delta|^\frac{1}{4}) & \text{ if } n=4,\ \gamma=0,\\
O(\delta) & \text{ if } n\geq 5,\ \gamma=0,\\
O(\delta^{1-\frac{(n+2)\gamma}{2(1+\gamma)}}) & \text{ if } n\geq 4,\ \gamma>0,
\end{cases}
\]
and
\[
|f_0(W_\delta) - f_0(U_{\delta,e_n})+f_0(U_{\delta,-e_n})|_{\frac{(p+1)(1+\gamma)}{p}}=\begin{cases}
O(\delta^{n-2}) & \text{ if } 4\leq n<6,\ \gamma\geq 0,\\
O(\delta^4|\log \delta|^\frac{2}{3}) & \text{ if } n=6,\ \gamma=0,\\
O(\delta^{\frac{n+2}{2}}) & \text{ if } n=6,\ \gamma>0 \text{ or } n>6,\ \gamma\geq 0.
\end{cases}
\]
\end{lemma}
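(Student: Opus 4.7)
The two parts of the lemma will be handled in parallel: in each case I will derive a pointwise upper bound for the integrand from a Taylor-type inequality (Lemma \ref{lemma:TaylorInequalities}), and then take the $L^s$ norm using H\"older's inequality and the bubble $L^q$ asymptotics (Lemma \ref{lemma:LpnormBubble}), with the convention $s:=\frac{(p+1)(1+\gamma)}{p}$ so that $ps=(p+1)(1+\gamma)$.

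For the first estimate, let $\psi:=W_\delta-w_\delta=W_\delta-PW_\delta$. Applying the second inequality of Lemma \ref{lemma:TaylorInequalities} with $g=f_0$ and $q=p$ yields
\[
|f_0(w_\delta)-f_0(W_\delta)|=|f_0(W_\delta-\psi)-f_0(W_\delta)|\leq C\bigl(|W_\delta|^{p-1}|\psi|+|\psi|^p\bigr).
\]
The pointwise bound \eqref{eq:estimate_PW-W} of Lemma \ref{exp:lem} gives $|\psi|\leq C(V_{\delta,e_n}+V_{\delta,-e_n})$, so H\"older's inequality with exponents $\frac{p}{p-1}$ and $p$ produces
\[
|f_0(w_\delta)-f_0(W_\delta)|_{s}\leq C|W_\delta|_{ps}^{p-1}\bigl(|V_{\delta,e_n}|_{ps}+|V_{\delta,-e_n}|_{ps}\bigr)+C\bigl(|V_{\delta,e_n}|_{ps}+|V_{\delta,-e_n}|_{ps}\bigr)^p.
\]
Since $ps=(p+1)(1+\gamma)>\frac{n}{n-2}$ always, Lemma \ref{lemma:LpnormBubble} gives $|U_{\delta,\pm e_n}|_{ps}=O(\delta^{-(n-2)\gamma/(2(1+\gamma))})$, hence $|W_\delta|_{ps}^{p-1}=O(\delta^{-2\gamma/(1+\gamma)})$. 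The condition $ps\gtreqless \frac{n}{n-3}$ is the only threshold that matters for $|V|_{ps}$: for $n\geq 5$ (or $n=4$, $\gamma>0$), one is in the strict regime and a direct computation gives $|V_{\delta,\pm e_n}|_{ps}=O(\delta^{(2-(n-4)\gamma)/(2(1+\gamma))})$, while for $n=4$, $\gamma=0$ the equality $ps=\frac{n}{n-3}=4$ produces the boundary case $|V_{\delta,\pm e_n}|_{ps}=O(\delta|\log\delta|^{1/4})$. Multiplying and simplifying reproduces the announced bounds, and one verifies that $|\psi|^p_{ps}$ is always of higher order.

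For the second estimate, set $a=U_{\delta,e_n}$ and $c=-U_{\delta,-e_n}$. Combining the two Taylor inequalities of Lemma \ref{lemma:TaylorInequalities} (distinguishing $p\geq 2$, i.e.\ $n\leq 6$, from $1<p<2$, i.e.\ $n>6$) and using that $g=f_0$ is odd and $C^1$ with $g'(t)=p|t|^{p-1}$, one obtains
\[
|f_0(W_\delta)-f_0(U_{\delta,e_n})+f_0(U_{\delta,-e_n})|=|g(a+c)-g(a)-g(c)|\leq C\bigl(U_{\delta,e_n}^{p-1}U_{\delta,-e_n}+U_{\delta,-e_n}^{p-1}U_{\delta,e_n}\bigr).
\]
I will then split the $L^s$ norm over the three regions $\omega_+$, $\omega_-$, and $B\setminus(\omega_+\cup\omega_-)$. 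On $\omega_+$ the bubble at $-e_n$ satisfies $U_{\delta,-e_n}\leq C\delta^{(n-2)/2}$ (and symmetrically for $\omega_-$), and on the complement both bubbles are of that order; hence by symmetry the key quantity is $\delta^{(n-2)/2}|U_{\delta,e_n}^{p-1}|_{L^s(\omega_+)}=\delta^{(n-2)/2}|U_{\delta,e_n}|^{p-1}_{(p-1)s}$ with $(p-1)s=\frac{8n(1+\gamma)}{(n-2)(n+2)}$. The position of $(p-1)s$ relative to $\frac{n}{n-2}$ drives the three cases of the statement: $(p-1)s>\frac{n}{n-2}$ in the range $4\leq n<6+8\gamma$ (hence a clean power law); equality $(p-1)s=\frac{n}{n-2}=\frac{3}{2}$ precisely at $n=6$, $\gamma=0$ (producing the logarithmic factor of Lemma \ref{lemma:LpnormBubble}); and $(p-1)s<\frac{n}{n-2}$ for $n>6+8\gamma$. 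Elementary arithmetic with the formulas of Lemma \ref{lemma:LpnormBubble} then gives exactly $O(\delta^{n-2})$, $O(\delta^4|\log\delta|^{2/3})$ and $O(\delta^{(n+2)/2})$, respectively.

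The only genuinely delicate points are the two boundary indices (the case $ps=\frac{n}{n-3}$ for $n=4$, $\gamma=0$ in part (i), and the case $(p-1)s=\frac{n}{n-2}$ for $n=6$, $\gamma=0$ in part (ii)), where the sharp asymptotic of Lemma \ref{lemma:LpnormBubble} must be invoked to capture the logarithmic corrections; everything else is a routine bookkeeping of exponents. I will verify in passing that the secondary contributions ($|\psi|^p$ in (i) and $\delta^2|U_{\delta,e_n}|_s$ and the region $B\setminus(\omega_+\cup\omega_-)$ in (ii)) are always subdominant.
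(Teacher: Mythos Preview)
Your treatment of the first estimate is correct and follows the same route as the paper: apply the Taylor inequality from Lemma~\ref{lemma:TaylorInequalities}, control $PW_\delta-W_\delta$ by $V_{\delta,\pm e_n}$ via \eqref{eq:estimate_PW-W}, and read off exponents from Lemma~\ref{lemma:LpnormBubble}. The threshold $ps=\tfrac{n}{n-3}$ at $n=4$, $\gamma=0$ is exactly the right place to pick up the logarithm.

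For the second estimate, however, your global symmetric bound
\[
|g(a+c)-g(a)-g(c)|\leq C\bigl(|a|^{p-1}|c|+|c|^{p-1}|a|\bigr)
\]
is too wasteful when $n>6$. On $\omega_+$, where $|c|=U_{\delta,-e_n}\leq C\delta^{(n-2)/2}$ but $|a|=U_{\delta,e_n}$ can be large, the term $|c|^{p-1}|a|$ contributes $\delta^{2}\,|U_{\delta,e_n}|_{L^s(\omega_+)}$; since $s>\tfrac{n}{n-2}$ for $n>6$, Lemma~\ref{lemma:LpnormBubble} gives $|U_{\delta,e_n}|_s=O(\delta^{2})$, hence this ``secondary'' term is $O(\delta^{4})$. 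For $n>6$ one has $(n+2)/2>4$, so $O(\delta^{4})$ is strictly weaker than the claimed $O(\delta^{(n+2)/2})$: the contribution you flagged as subdominant actually dominates and spoils the stated exponent.

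The remedy is to use, on each region, the \emph{asymmetric} bound that Lemma~\ref{lemma:TaylorInequalities} already provides: from $|g(a+c)-g(a)|\leq C(|a|^{p-1}|c|+|c|^{p})$ together with $|g(c)|=|c|^{p}$ one gets
\[
|g(a+c)-g(a)-g(c)|\leq C\bigl(|a|^{p-1}|c|+|c|^{p}\bigr),
\]
valid for all $p>1$. On $\omega_+$ this replaces the bad term $|c|^{p-1}|a|$ by $|c|^{p}=U_{\delta,-e_n}^{p}$, whose $L^s$ norm is $O(\delta^{(n+2)/2})$, and the leading term $|a|^{p-1}|c|$ now gives, for $(p-1)s<\tfrac{n}{n-2}$,
\[
\Bigl(\delta^{s(n-2)/2}\!\int_{\omega_+}\!U_{\delta,e_n}^{(p-1)s}\Bigr)^{1/s}
=O\bigl(\delta^{(n+2)/2}\bigr),
\]
as claimed. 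This is precisely the paper's approach: split into $\omega_\pm$ first, and on each half apply the one-sided Taylor estimate adapted to which bubble is small there. Your organization (symmetric bound first, regions after) is cleaner in spirit but loses the sharp exponent for $n>6$; swapping to the asymmetric bound on each region fixes the gap with no further changes.
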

\begin{proof}

\noindent a) Estimate of $ |f_0(w_\delta) - f_0(W_\delta)|_{\frac{(p+1)(1+\gamma)}{p}}$ for $\gamma\geq 0$. Using the second statement of Lemma \ref{lemma:TaylorInequalities} with $q:=p$, $a:=W_\delta$ and $b:=w_\delta-W_\delta$, followed by H\"older's inequality,
\begin{align*}
\int_{B} |f_0&(w_\delta)-f_0(W_\delta)|^\frac{(p+1)(1+\gamma)}{p} = \int_{B} \left||w_\delta|^{p-1}W
_\delta- |W_\delta|^{p-1}W_\delta \right|^\frac{(p+1)(1+\gamma)}{p}\\
		& \leq C \int_{B} \left(|W_\delta|^{p-1}|w_\delta-W_\delta| + |w_\delta -W_\delta|^p \right)^\frac{(p+1)(1+\gamma)}{p}	\\
		& \leq C' \int_{B} |W_\delta|^\frac{(p-1)(p+1)(1+\gamma)}{p}|w_\delta-W_\delta|^\frac{(p+1)(1+\gamma)}{p}+C' \int_{B} |w_\delta-W_\delta|^{(p+1)(1+\gamma)}\\
		& \leq C'' \left(\int_{B} |W_\delta|^{(p+1)(1+\gamma)}\right)^\frac{p-1}{p}\left(\int_{B} |w_\delta-W_\delta|^{(p+1)(1+\gamma)}\right)^\frac{1}{p}+C' \int_{B} |w_\delta-W_\delta|^{(p+1)(1+\gamma)}.
\end{align*}
Now, recalling that $p+1=\frac{2n}{n-2}$ and using Lemma \ref{lemma:LpnormBubble}:
\[
\int_{B} |W_\delta|^{(p+1)(1+\gamma)}\leq  C\int_{B} \left(|U_{\delta,e_n}|^{(p+1)(1+\gamma)}+|U_{\delta,-e_n}|^{(p+1)(1+\gamma)} \right) =\begin{cases}
O(1) & \text{ if } \gamma=0,\\
O(\delta^{-{n\gamma}}) & \text{ if } \gamma>0.
\end{cases}
\]
Moreover, a consequence of Lemma \ref{exp:lem} combined with Lemma \ref{lemma:LpnormBubble}:
\[
\int_{B} |w_\delta-W_\delta|^{(p+1)(1+\gamma)}=
\begin{cases}
O(\delta^4 |\log \delta|) & \text{ if } n=4,\ \gamma=0,\\
O(\delta^{\frac{2n}{n-2}}) & \text{ if } n>4,\ \gamma=0,\\
O(\delta^\frac{2n-n\gamma(n-4)}{n-2}) & \text{ if } n\geq 4,\ \gamma>0.
\end{cases}
\]
In conclusion,
\begin{align*}
\int_{B} |f_0&(w_\delta)-f_0(W_\delta)|^\frac{(p+1)(1+\gamma)}{p}=\begin{cases}
O(\delta^{\frac{p+1}{p}}|\log \delta|^\frac{1}{3}) & \text{ if } n=4,\ \gamma=0\\
O(\delta^{\frac{p+1}{p}}) & \text{ if } n\geq 5,\ \gamma=0,\\
O\left(\delta^{\frac{p+1}{p}\left(1-\frac{n\gamma}{2}\right)}\right) & \text{ if } n\geq 4,\ \gamma>0.
\end{cases}
\end{align*}
\smallbreak

\noindent b) Estimate of $|f_0(W_\delta) - f_0(U_{\delta,e_n})+f_0(U_{\delta,-e_n})|_{\frac{(p+1)(1+\gamma)}{p}}$. We use the notation 
\begin{align*}
\omega_+:={B}\cap B_{\frac{1}{2}}(e_n)\qquad \text{ and }\qquad \omega_-:={B}\cap B_{\frac{1}{2}}(-e_n).    
\end{align*}

Observe that 
\begin{align}
\int_{\omega_+} | f_0(W_\delta) - f_0(U_{\delta,e_n})+&f_0(U_{\delta,-e_n}) |^\frac{(p+1)(1+\gamma)}{p} =\int_{\omega_+} \left| |W_\delta|^{p-1}W_\delta - U_{\delta,e_n}^p+U_{\delta,-e_n}^p \right|^\frac{(p+1)(1+\gamma)}{p} \nonumber \\
				&\leq C \int_{\omega_+} \left| |W_\delta|^{p-1}W_\delta - U_{\delta,e_n}^p \right|^\frac{(p+1)(1+\gamma)}{p} + C\int_{\omega_+} U_{\delta,-e_n}^{(p+1)(1+\gamma)}\label{eq:estimateR_aux2}.
\end{align}
Now, since $|x+e_1|\geq 1$ in $\omega_+$,
\begin{align}\label{eq:estimateR_aux3}
\int_{\omega_+} U_{\delta,-e_n}^{(p+1)(1+\gamma)} = \int_{\omega_+} \left( \frac{\alpha_n^{p+1}\delta^n}{(\delta^2+|x+e_1|^2)^n}\right)^{1+\gamma} \, dx \leq \alpha_n^{(p+1)(1+\gamma)} |\omega_+| \delta^{n(1+\gamma)} = O(\delta^{n(1+\gamma)}).
\end{align}
Applying the second statement of Lemma \ref{lemma:TaylorInequalities} with $q:=p$, $a:=U_{\delta,e_n}$ and $b:=-U_{\delta,-e_2}$, we obtain
\begin{align}\label{eq:estimateR_aux3.2}
\int_{\omega_+} \left| |W_\delta|^{p-1}W_\delta-U_{\delta,e_n}^p \right|^\frac{(p+1)(1+\gamma)}{p} 
			&\leq C \int_{\omega_+} \left(|U_{\delta,e_n}|^{p-1}U_{\delta,-e_n} + |U_{\delta,-e_n}|^{p}\right)^\frac{(p+1)(1+\gamma)}{p}\\
			& \leq C' \int_{\omega_+} U_{\delta,e_n}^\frac{(p-1)(p+1)}{p} U_{\delta,-e_n}^\frac{(p+1)(1+\gamma)}{p} + C' \int_{\omega_+} |U_{\delta,-e_n}|^{(p+1)(1+\gamma)} \nonumber \\
			&\leq O(\delta^\frac{n(n-2)(1+\gamma)}{n+2}) \int_{\omega_+} |U_{\delta,e_n}|^\frac{8n(1+\gamma)}{(n+2)(n-2)} + O(\delta^{n(1+\gamma)}). \label{eq:estimateR_aux1}
\end{align}
By Lemma \ref{lemma:LpnormBubble} with $R=2$, $q:=\frac{8n}{(n+2)(n-2)}$, and observing that $0<q<\frac{2n}{n-2}$, and that $q<\frac{n}{n-2}$ if and only if $n>6$, we have
\begin{equation}\label{eq:estimateR_aux4}
\int_{\omega_+} U_{\delta,e_n}^\frac{8n}{(n+2)(n-2)} =\begin{cases}
O(\delta^\frac{n(n-2)}{n+2}) & \text{ if } 4\leq n<6,\\
O(\delta^3|\log \delta|) & \text{ if } n=6,\\
O(\delta^\frac{4n}{n+2}) & \text{ if } n>6.
\end{cases}
\end{equation}
while, for $\gamma>0$ sufficiently small:
\begin{equation}\label{eq:estimateR_aux5}
\int_{\omega_+} U_{\delta,e_n}^\frac{8n(1+\gamma)}{(n+2)(n-2)} =\begin{cases}
O(\delta^\frac{n(n-2)(1+\gamma)}{n+2}) & \text{ if } 4\leq n<6,\\
O(\delta^\frac{4n(1+\gamma)}{n+2}) & \text{ if } n\geq 6.
\end{cases}
\end{equation}

Going back to \eqref{eq:estimateR_aux1} and combining it with \eqref{eq:estimateR_aux4}--\eqref{eq:estimateR_aux5}, we see that
\begin{align*}
\int_{\omega_+} \left| |W_\delta|^{p-1}W_\delta - U_{\delta,e_n}^p+U_{\delta,-e_n}^p \right|^\frac{(p+1)(1+\gamma)}{p} 
		&=\begin{cases}
O(\delta^\frac{2n(n-2)(1+\gamma)}{n+2}) & \text{ if } 3\leq n<6,\ \gamma\geq 0 \text{ small},\\
O(\delta^6|\log \delta|) & \text{ if } n=6,\ \gamma=0\\
O(\delta^{n(1+\gamma)}) & \text{ if } n>6,\ \gamma>0 \text{ or } n>6,\ \gamma\geq 0.
\end{cases}
\end{align*}
Exchanging the roles of $e_n$ and $-e_n$, we have precisely the same type of estimate for 
\[
\int_{\omega_-} \left| |W_\delta|^{p-1}W_\delta - U_{\delta,e_n}^p+U_{\delta,-e_n}^p \right|^\frac{(p+1)(1+\gamma)}{p}.
\] Finally, since $|x-e_n|,|x+e_n|\geq \frac{1}{2}$ for $x\in {B} \backslash (\omega_+ \cup \omega_-)$, we have
\begin{align*}
\int_{{B} \backslash (\omega_+ \cup \omega_-)} \left| |W_\delta|^{p-1}W_\delta - U_{\delta,e_n}^p+U_{\delta,-e_n}^p \right|^\frac{(p+1)(1+\gamma)}{p} 	&\leq C \int_{{B} \backslash (\omega_+ \cup \omega_-)} \left( U_{\delta,e_n}^{(p+1)(1+\gamma)}+ U_{\delta,-e_n}^{(p+1)(1+\gamma)}\right)\\
&=O(\delta^{n(1+\gamma)}),
\end{align*}
which concludes the proof.
\end{proof}

\begin{lemma}\label{lemma:|i*(f_eps(w_delta)-f_0(w_delta))|_aux}

For $\ell>0$ small, let $\eta\in (0,1)$, $d>0$ and $\delta=d\eps$. Then, for every $\gamma\in(0,1)$,
\[
|f_\eps(w_\delta)-f_0(w_\delta)|_\frac{(p+1)(1+\ell)}{p}
= O(\eps^{1-\gamma}).
\]
as $\eps\to 0$, uniformly in $d\in (1/\eta,\eta)$.
\end{lemma}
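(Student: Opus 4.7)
The plan is to reduce the statement to a purely $L^q$ estimate on powers of bubbles, via Corollary \ref{cor:estimates_with_bubbles} and Lemma \ref{lemma:LpnormBubble}, after Taylor expanding $f_\eps - f_0$ in $\eps$. Concretely, by Lemma \ref{lemma:f_eps_expansion} I would write
\[
f_\eps(w_\delta) - f_0(w_\delta) = \eps\, |w_\delta|^{p-1} w_\delta \log|w_\delta| + \eps^2 r_{1,\eps}(w_\delta),
\]
with the pointwise bound $|r_{1,\eps}(w_\delta)| \leq \tfrac{1}{2}(|w_\delta|^p + |w_\delta|^{p+\eps})(\log|w_\delta|)^2$. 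The task therefore boils down to controlling the $L^q$-norm, with $q = (p+1)(1+\ell)/p$, of $|w_\delta|^p|\log|w_\delta||$ and $|w_\delta|^{p+\eps}(\log|w_\delta|)^2$.

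Second, I would absorb the logarithmic factors via the elementary inequality $|\log s| \leq C_{\gamma_0}(s^{\gamma_0}+s^{-\gamma_0})$ valid for every $s>0$ and any fixed $\gamma_0>0$, so that
\[
|w_\delta|^{\sigma}|\log|w_\delta||^k \leq C(|w_\delta|^{\sigma - k\gamma_0}+|w_\delta|^{\sigma + k\gamma_0}),\quad k=1,2,
\]
with $\sigma \in \{p,\, p+\eps\}$. Since by Corollary \ref{cor:estimates_with_bubbles} one has $|w_\delta| \leq C(U_{\delta,e_n}+U_{\delta,-e_n})$, Minkowski's inequality reduces the problem to estimating $\|U_{\delta,\pm e_n}^{\sigma}\|_{L^q}$ for a finite list of exponents $\sigma$ close to $p$.

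Third, for each such exponent, setting $r := q\sigma$ which is $>n/(n-2)$ provided $\ell,\gamma_0,\eps$ are small enough, Lemma \ref{lemma:LpnormBubble} gives $\int_B U_{\delta,\pm e_n}^r = O(\delta^{n - r(n-2)/2})$. Using the identity $(p+1)(n-2)/2 = n$, a short computation shows that the worst contribution comes from $\sigma = p + \gamma_0$ and yields
\[
\eps\, \|U_{\delta,\pm e_n}^{p+\gamma_0}\|_{L^q} = O\!\left(\eps\cdot \delta^{-n\ell/q - \gamma_0(n-2)/2}\right) = O\!\left(\eps^{\,1 - n\ell/q - \gamma_0(n-2)/2}\right),
\]
since $\delta = d\eps$ with $d \in (1/\eta,\eta)$. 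The $\eps^2 r_{1,\eps}$ term is treated identically and is strictly of lower order. Given $\gamma \in (0,1)$, it then suffices to pick $\ell$ and $\gamma_0$ so small that $n\ell/q + \gamma_0(n-2)/2 < \gamma$, yielding the desired bound $O(\eps^{1-\gamma})$.

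The main obstacle I anticipate is bookkeeping rather than a single hard step: one must juggle the three small parameters $\eps$, $\ell$, $\gamma_0$ simultaneously and ensure that, after all logarithmic regularizations, every exponent remains strictly above $n/(n-2)$ so that Lemma \ref{lemma:LpnormBubble} yields the power-decay regime. Extra care is needed for the term $|w_\delta|^{p+\eps}(\log|w_\delta|)^2$, because $\|w_\delta\|_\infty \sim \delta^{-(n-2)/2}$ is unbounded as $\eps \to 0$, so the extra factor $|w_\delta|^\eps$ cannot be dropped via a uniform constant and must itself be swallowed by the $s^{\pm \gamma_0}$ trick applied to $|w_\delta|^\eps(\log|w_\delta|)^2$.
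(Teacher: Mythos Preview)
Your proposal is correct and follows essentially the same approach as the paper: Taylor expand $f_\eps - f_0$ via Lemma~\ref{lemma:f_eps_expansion}, absorb the logarithmic factors into small powers of $|w_\delta|$, bound $|w_\delta|$ by a sum of bubbles via Corollary~\ref{cor:estimates_with_bubbles}, and then invoke Lemma~\ref{lemma:LpnormBubble}. The only cosmetic difference is that the paper packages the log-absorption as $|t|^a(\log|t|)^b \leq C(1+|t|^{a+\sigma})$ rather than your two-sided bound $|\log s| \leq C_{\gamma_0}(s^{\gamma_0}+s^{-\gamma_0})$; also, your concern about the factor $|w_\delta|^\eps$ is slightly overstated, since $\|w_\delta\|_\infty^\eps = O(\delta^{-\eps(n-2)/2}) = O((d\eps)^{-\eps(n-2)/2}) = O(1)$ as $\eps\to 0$, but handling it via the $s^{\pm\gamma_0}$ trick as you suggest works just as well.
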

\begin{proof}
Consider the case $\ell=0$. Let $\sigma>0$. Using the expansion \eqref{eq:f_eps(t)_expansion}, 
\begin{align*}
f_\eps(t)&=f_0(t)+ \eps |t|^{p-1}t \log |t|+ \eps^2 r_\eps(t),\\ 
|r_\eps(t)|&\leq  C(|t|^p + |t|^{p+\bar \eps})(\log |t|)^2,
\end{align*}
where we take $\bar \eps$ such that $(p+\bar \eps)(p+1)/p <(2n+2\sigma)/(n-2)$. Then (recall that $(p+1)/p=2n/(n+2)$),
\begin{align*}
\int_{B} |f_\eps(w_\delta)-f_0(w_\delta)|^\frac{p+1}{p} &= \int_{B} |\eps w_\delta |w_\delta|^{p-1}\log |w_\delta| + \eps^2 r_\eps(w_\delta)|^\frac{p+1}{p}\\
			&\leq C \eps^\frac{p+1}{p}\int_{B} |w_\delta|^{p+1}(\log |w_\delta|)^\frac{p+1}{p} + C \eps^\frac{2(p+1)}{p}\int_{B} |r_\eps(w_\delta)|^\frac{p+1}{p} \\
			& \leq C''\eps^\frac{2n}{n+2} \left(1+\int_{B} |w_\delta|^\frac{2n+2\sigma}{n-2} + \eps^\frac{2n}{n+2}\int_{B} |w_\delta|^\frac{2n+2\sigma}{n-2}\right).
\end{align*}
We can now conclude by recalling that $|w_\delta|\leq (U_{\delta,e_n}+U_{\delta,-e_n})$ - see Lemma \ref{exp:lem} - and by applying Lemma \ref{lemma:LpnormBubble}, which yields
\[
\int_{B} |w_\delta|^\frac{2n+2\sigma}{n-2} \leq C \int_{B} \left(U_{\delta,e_n}^\frac{2n+2\sigma}{n-2}+U_{\delta,-e_n}^\frac{2n+2\sigma}{n-2} \right)=O(\delta^{-\sigma}). \qedhere
\]
The case $\ell>0$ small is analogous, see the proof of Lemma \ref{Lemma:I_1aux}.
\end{proof}

\begin{lemma}\label{lemmaA}
Let $d>0$ and $\delta=d\eps$. Then, for every $\gamma\in(0,1)$,
\[
|f'_0(w_\delta)|_{\frac{n}{2}}=O(1),\qquad |f'_\eps(w_\delta)-f'_0(w_\delta)|_{\frac{n}{2}}\leq o(\eps^{1-\gamma}).
\]
\end{lemma}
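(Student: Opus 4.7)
The first estimate is essentially a bookkeeping computation. Since $f'_0(t)=p|t|^{p-1}$ and $(p-1)\cdot\frac{n}{2}=\frac{4}{n-2}\cdot\frac{n}{2}=\frac{2n}{n-2}=p+1$, the norm $|f'_0(w_\delta)|_{n/2}$ is (up to the constant $p$) the $L^{p+1}$-norm of $w_\delta$ raised to the power $(p+1)/(n/2)$. The plan is to combine the pointwise bound $|w_\delta|\leq C(U_{\delta,e_n}+U_{\delta,-e_n})$ from Corollary \ref{cor:estimates_with_bubbles} with the scale-invariant estimate $|U_{\delta,\pm e_n}|_{p+1}=O(1)$ given by Lemma \ref{lemma:LpnormBubble} (third regime at the borderline $q=p+1$); this immediately yields $|f'_0(w_\delta)|_{n/2}=O(1)$.

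For the second estimate, I would start from the Taylor expansion for $f'_\eps$ in Lemma \ref{lemma:f_eps_expansion}, which gives
\[
f'_\eps(w_\delta)-f'_0(w_\delta)=\eps\bigl(|w_\delta|^{p-1}+p|w_\delta|^{p-1}\log|w_\delta|\bigr)+\eps^{2}r_{2,\eps}(w_\delta),
\]
with $|r_{2,\eps}(t)|\leq C(|t|^{p-1}+|t|^{p-1+\eps})(|\log|t||+(\log|t|)^2)$. The key device to remove logarithms is the elementary bound $|\log|t||+(\log|t|)^2\leq C_\sigma(|t|^{-\sigma}+|t|^{\sigma})$, valid for any small $\sigma>0$ and all $t\neq 0$. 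All the quantities are thereby reduced to $L^{n/2}$-norms of $|w_\delta|^{p-1\pm\sigma}$ (and of $|w_\delta|^{p-1+\eps\pm\sigma}$, which for $\eps<\sigma$ are absorbed).

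Next I would apply Lemma \ref{lemma:LpnormBubble} to these norms. Noting that $(p-1+\sigma)\cdot\tfrac n2=(p+1)+\sigma\tfrac n2$ lies in the third regime of Lemma \ref{lemma:LpnormBubble}, a direct computation gives
\[
\bigl|\,|w_\delta|^{p-1+\sigma}\bigr|_{n/2}=O\bigl(\delta^{-\sigma(n-2)/2}\bigr)=O\bigl(\eps^{-\sigma(n-2)/2}\bigr),
\]
while $\bigl|\,|w_\delta|^{p-1-\sigma}\bigr|_{n/2}=O(\eps^{\sigma(n-2)/2})=O(1)$. Consequently the linear-in-$\eps$ term is controlled by $O(\eps^{1-\sigma(n-2)/2})$ and the quadratic remainder $\eps^2 r_{2,\eps}$ by $O(\eps^{2-\sigma(n-2)})$. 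Given any $\gamma\in(0,1)$, choosing $\sigma>0$ small enough so that $\sigma(n-2)/2<\gamma$ makes both contributions of order $o(\eps^{1-\gamma})$, as required.

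The only mild subtlety (and not really an obstacle) is making sure the same $\sigma$ simultaneously controls the leading and the remainder term, and that all constants stay uniform as $\eps\to 0$, which is immediate because all the exponents depend continuously on $\sigma$ and $\eps$. No new ideas beyond the bubble $L^q$-calculus of Lemma \ref{lemma:LpnormBubble} are needed.
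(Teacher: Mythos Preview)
Your proposal is correct and follows essentially the same route as the paper: for the first estimate you invoke Corollary~\ref{cor:estimates_with_bubbles} and Lemma~\ref{lemma:LpnormBubble} exactly as the paper does, and for the second you start from the expansion of Lemma~\ref{lemma:f_eps_expansion} and then reduce the logarithmic factors to small powers via $|\log|t||+(\log|t|)^2\le C_\sigma(|t|^{-\sigma}+|t|^{\sigma})$, which is precisely the mechanism underlying the paper's reference to ``arguing as in Lemma~\ref{lemma:|i*(f_eps(w_delta)-f_0(w_delta))|_aux}''. Your version is in fact more explicit than the paper's, which simply defers the details to that earlier lemma.
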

\begin{proof}
The first estimate follows from Corollary \ref{cor:estimates_with_bubbles} and Lemma \ref{lemma:LpnormBubble}.  For the second estimate, note that, by Lemma \ref{lemma:f_eps_expansion}, 
\begin{align*}
f'_\eps(t)-f'_0(t) &=\eps (|t|^{p-1}+p|t|^{p-1} \log |t|)+ \eps^2 r_{2,\eps}(t),\\
|r_{2,\eps}(t)|&\leq  2(p+1)\left(|t|^{p-1}+|t|^{p-1+\eps}\right)(\log|t|+ (\log |t|)^2)
\end{align*}
and therefore,
\begin{align*}
&\int_{B} |f'_\eps(w_\delta)-f'_0(w_\delta)|^\frac{n}{2}
=\int_{{B}} \left| |w_{\delta}|^{\frac{4}{n-2}+\eps}-|w_\delta|^{\frac{4}{n-2}} \right|^\frac{n}{2}\\
&\leq C \eps\left(\int_{{B}}|w_\delta|^{p+1}+
|w_\delta|^{p+1}|\log|w_\delta||^{\frac{n}{2}}+\eps^\frac{n}{2}(|w_\delta|^{\frac{4}{n-2}}+|w_\delta|^{\frac{4}{n-2}+\eps})^\frac{n}{2}(|\log|w_\delta||+|\log|w_\delta||^2)^{\frac{n}{2}}
\,dx\right).
\end{align*}
Now the claim follows arguing as in Lemma \ref{lemma:|i*(f_eps(w_delta)-f_0(w_delta))|_aux}.
\end{proof}

\bibliographystyle{plain}

\bigskip

\begin{flushleft}
\textbf{Angela Pistoia}\\
Dipartimento di Scienze di Base e Applicate per l’Ingegneria\\
Sapienza Universita di Roma\\
Via Scarpa 16, 00161 Roma, Italy\\
\texttt{angela.pistoia@uniroma1.it} 
\vspace{.3cm}

\textbf{Alberto Saldaña}\\
Instituto de Matemáticas\\
Universidad Nacional Autónoma de México\\
Circuito Exterior, Ciudad Universitaria\\
04510 Coyoacán, Ciudad de México, Mexico\\
\texttt{alberto.saldana@im.unam.mx} 
\vspace{.3cm}

\textbf{Hugo Tavares}\\
Departamento de Matemática do Instituto Superior Técnico\\
Universidade de Lisboa\\
Av. Rovisco Pais\\
1049-001 Lisboa, Portugal\\
\texttt{hugo.n.tavares@tecnico.ulisboa.pt} 
\end{flushleft}

\end{document}